\theoremstyle{plain}
\newtheorem{question}{Question}
\newtheorem{problem}[question]{Problem}
\newtheorem{theorem}[question]{Theorem}
\newtheorem{proposition}[question]{Proposition}
\newtheorem{corollary}[question]{Corollary}
\newtheorem{remark}[question]{Remark}
\theoremstyle{definition}
\newtheorem{definition}[question]{Definition}
\newtheorem*{question*}{Question}
\numberwithin{question}{section}
\numberwithin{equation}{section}
\newtheorem*{theorem*}{Theorem}
\newcommand{\containers}{\mathcal{C}}
\newcommand{\PP}{\mathcal{P}}
\newcommand{\V}{\mathbf{V}}
\newcommand{\N}{\mathbb{N}}
\newcommand{\eps}{\varepsilon}
\DeclareMathOperator{\ex}{ex}
\DeclareMathOperator{\vol}{vol}
\DeclareMathOperator{\Boxes}{Box}
\DeclareMathOperator{\Ent}{Ent}
\DeclareMathOperator{\Forb}{Forb}
\newcommand{\norm}[1]{\lVert #1 \rVert}
\newcommand{\cutnorm}[1]{\norm{#1}_{\square}}
\newcommand{\deltacut}{\delta_{\square}}
\newcommand{\oi}{[0,1]}
\newcommand{\dcut}{d_{\square}}
\newcommand{\kcut}{\square_{k}}
\newcommand{\dkcut}{d_{\kcut}}
\newcommand{\deltakcut}{\delta_{\kcut}}
\newcommand{\limitsx}[1]{\widehat{#1}}
\title{Rectilinear approximation and volume estimates for hereditary bodies via $[0,1]$-decorated containers}
\author{Victor Falgas--Ravry\thanks{Institutionen f\"or Matematik och Matematisk Statistik, Ume{\aa} Universitet, 901 87 Ume{\aa}, Sweden. Email: \texttt{victor.falgas-ravry@umu.se}. Research supported by Swedish Research Council grant 2016-03488.} \and Robert Hancock\thanks{Institut f\"ur Informatik, Im Neuenheimer Feld 205, 69120 Heidelberg, Germany. Email: \texttt{hancock@informatik.uni-heidelberg.de}. The research leading to these results was partially supported by the
Deutsche Forschungsgemeinschaft (DFG, German Research Foundation) – 428212407.} \and Johanna Str\"omberg\thanks{Matematiska Institutionen, Uppsala Universitet, L\"agerhyddsv\"agen 1, 751 06 Uppsala, Sweden. Email: \texttt{johanna.stromberg@math.uu.se}} \and Andrew Uzzell\thanks{Email: andrew.uzzell@gmail.com}}
\begin{document}
\maketitle
\begin{abstract}	
We use the hypergraph container theory of Balogh--Morris--Samotij and Saxton--Thomason 	to obtain general rectilinear approximations and volume estimates for sequences of bodies closed under certain families of projections. We give a number of applications of our results, including a multicolour generalisation of a theorem of Hatami, Janson and Szegedy on the entropy of graph limits. Finally, we raise a number of questions on geometric and analytic approaches to containers.
\end{abstract}

\section{Introduction}

\subsection{Aims of the paper}
In a major breakthrough six years ago now, Balogh--Morris--Samotij~\cite{BaloghMorrisSamotij15} and Saxton--Thomason~\cite{SaxtonThomason15} developed powerful theories of hypergraph containers. Given a hypergraph $H$ satisfying some smoothness assumptions they showed that there exists a \emph{small} collection of \emph{almost independent} sets whose subsets \emph{contain} all the independent sets of $H$. A wide variety of problems in combinatorics are equivalent to estimating the number of independent sets in various hypergraphs; the groundbreaking work of~\cite{BaloghMorrisSamotij15,SaxtonThomason15} has thus seen an equally wide variety of applications, see e.g. the surveys~\cite{BaloghWagner16,BaloghMorrisSamotij18}.

In this paper, our aim is to explore the implications of container theory beyond the discrete setting (which has hitherto been the main focus in applications) to the continuous setting, and to ask whether it is possible to obtain some form of containers going in the other direction, i.e. starting from results in the continuous setting. We do this in two ways.

First of all, we relate hypergraph containers to rectilinear approximation of continuous bodies. Informally, we show that container theory implies the following: consider a sequence of bodies $(b_n)_{n \in \mathbb{N}}$, where $b_n \subseteq [0,1]^{d_n}$. Suppose this sequence is closed under certain projections (satisfying some simple, natural conditions). Then the bodies in the sequence can be finely approximated by a small number of boxes. This (informally stated) result, Theorem~\ref{theorem: containers for hereditary bodies}, allows us to apply container theory to functions from discrete structures to $[0,1]$--- for instance, we estimate (Theorem~\ref{thm: probability lipschitz}) the probability that a random function from the Boolean hypercube to $[0,1]$ is $c$-Lipschitz. We use Theorem~\ref{theorem: containers for hereditary bodies} to obtain general volume estimates for hereditary bodies. Similarly to applications of container theory to counting problems, this requires certain supersaturation results, which in much of the literature are obtained in an ad hoc manner. One of our contributions in this paper is to obtain a general, widely applicable form of supersaturation under a natural assumption (which is satisfied in most examples that have been studied), leading to a very clean general statement, Theorem~\ref{theorem: counting for homogeneous ssee}, for volume estimates. A key question arising from this part of the paper is whether our rectilinear approximation results could be obtained directly from purely geometric considerations: given a sequence of bodies, is being closed under some family of projections enough to ensure the existence of good rectilinear approximations without resorting to container machinery? Also could some (weak) form of container theorem be obtained from geometric approximation arguments?

Secondly, in what was the initial motivation of this work, we investigate links between hypergraph containers and the theory of graph limits. Via container theory, we prove a multicolour generalisation of a theorem of Hatami, Janson and Szegedy~\cite{HatamiJansonSzegedy18} on the entropy of graph limits (Theorem~\ref{theorem: maximum entropy gives growth rate for [k]}). Our work in this part of the paper leads us to two questions. Can one extend the Hatami--Janson--Szegedy theorem further to $[0,1]$-decorated graph limits? This connects to a broader project of Lov\'asz and Szegedy~\cite{LovaszSzegedy10} on extending the theory of graph limits to limits of compact decorated graphs. Further, as above, is it possible to go in the other direction, and to derive some (weak) form of container theorem for graph properties from compactness results for graphons?

We note our work in this paper focusses exclusively on `thick' hereditary bodies, whose volume varies exponentially with the dimension, rather than `thin' bodies whose volume decays superexponentially. It is thus natural to ask whether one can obtain a set of streamlined general results similar to the ones we derive in this paper but for `thin' bodies. Also, our work suggest families of new Tur\'an-type entropy maximisation problems. These, along with the questions raised above, are discussed in greater detail in Section~\ref{section: conclusion}.

\subsection{Background}
The problem of estimating the  number of members of a hereditary class of discrete objects and characterising their typical structure  has a long and distinguished history, beginning with the work of Erd{\H o}s, Kleitman and Rothschild~\cite{ErdosKleitmanRothschild76} in the 1970s. The Alekseev--Bollob\'as--Thomason theorem~\cite{Alekseev92,Alekseev93,BollobasThomason97} determined the asymptotics of the logarithm of the number of graphs on $n$ vertices in a hereditary property of graphs, while  Alon, Balogh, Bollob\'as and Morris~\cite{AlonBaloghBollobasMorris11} characterised their typical structure. Further Conlon and Gowers~\cite{ConlonGowers10} and Schacht~\cite{Schacht2009} obtained general transference results, which in particular implied sparse random analogues of extremal theorems for monotone properties of graphs (see the ICM survey of Conlon~\cite{Conlon14} devoted to this topic).

In a major development in 2015, Balogh, Morris and Samotij~\cite{BaloghMorrisSamotij15} and independently Saxton and Thomason~\cite{SaxtonThomason15} developed powerful theories of hypergraph containers. Informally, they showed that --- given some smoothness conditions --- one may find in an $r$-uniform hypergraph $H$ on $n$ vertices a \emph{small} (size $2^{o(n^2)}$) collection $\mathcal{C}$ of \emph{almost independent sets} (containing at most $o(n^r)$ edges), with the \emph{container} property that every independent set $I$ in $H$ is contained inside some $C\in \mathcal{C}$. Provided one has a good understanding of the size and structure of the largest independent sets in $H$ (which is an extremal problem), one can use containers to estimate the number of independent sets in $H$ and characterise their typical structure, and to transfer such results to sparse random subhypergraphs of $H$. Since many well-studied hereditary properties of discrete structures correspond to the collection of independent sets in some suitably defined hypergraphs, the ground-breaking work of Balogh--Morris--Samotij and Saxton--Thomason has had an enormous number of applications, providing new, simplified proofs of many previous results as well as the resolution of many old conjectures. In the six years elapsed since their publication, the papers~\cite{BaloghMorrisSamotij15, SaxtonThomason15} had amassed over 250 citations each. Among these let us note the works of greatest relevance to the present paper, namely the work of Balogh and Wagner~\cite{BaloghWagner16} showcasing the versatility of the container method, the papers of Terry~\cite{Terry16} and Falgas-Ravry--O'Connell--Uzzell~\cite{FalgasRavryOconnellUzzell18} on applications of containers to multicoloured discrete structures, and the ICM survey of Balogh, Morris and Samotij~\cite{BaloghMorrisSamotij18} devoted to hypergraph containers.

Following on~\cite{FalgasRavryOconnellUzzell18}, in which containers were adapted to the multicolour setting via random colouring models whose (discrete) entropy was used to count the underlying multicoloured structures, we shall in this paper use (continuous) entropy in combination with containers to estimate the volume of hereditary bodies. Entropy was introduced by Shannon~\cite{Shannon48} in a foundational paper on information theory; the use of entropy for counting (in the discrete setting) or making volume estimates (in the continuous setting) is a well-established technique in combinatorics, see e.g. the lectures of Galvin~\cite{Galvin14} on this topic. Mention should be made here of the recent and impressive results of Kozma, Meyerovitch, Peled and Samotij~\cite{KozmaMeyerovitchPeledSamotij21} who obtained a very fine approximation of the metric polytope (see the discussion in Section~\ref{subsection: metric polytopes}) via much more sophisticated and involved entropy techniques than the ones used in this paper.

One motivation for writing this paper was to better understand the potential links between container theory and the theory of graph limits. Giving an exposition of the latter theory is beyond the scope of this paper, and we refer the interested reader to the monograph of Lov\'asz on the topic~\cite{LovaszBook}. It suffices to say here that in the theory of (dense) graph limits one passes from the discrete world of graphs to the continuous world of graphons, which are symmetric measureable functions $W: \ [0,1]^2\rightarrow [0,1]$. One can then seek to recover many finitary results of graph theory in the limit world of graphons via analytic techniques (or in fact prove new results which can be exported back to the world of finite graphs). In a 2018 paper, Hatami, Janson and Szegedy~\cite{HatamiJansonSzegedy18} defined an entropy function for graphons, and used this entropy function to reformulate and give an alternative proof of the Alekseev--Bollob\'as--Thomason theorem in the graph limit setting. Lov\'asz and Szegedy~\cite{LovaszSzegedy10} began extending the theory of graph limits from ordinary graphs to graphs whose edges are decorated or coloured with elements from a compact set; further work in this direction was done more recently by Kunszenti-Kov\'acs, Lov\'asz and Szegedy~\cite{KKLS14}, though (as we mention in Section~\ref{section: conclusion}) some parts of the theory are yet to be extended, such as the extraction of convergent subsequences from an arbitrary sequence of decorated graphs.

\subsection{Definitions and statement of our main results}
Before we can state our main results, we need to introduce some basic notation, to recall definitions of set sequences equipped with embeddings (ssee-s) and to define a number of concepts related to $[0,1]$-decorations, entropy and  hereditary properties of decorated ssee-s. Throughout the paper we shall use $\vert A\vert $ to denote the Lebesgue measure of $A$ when $A$ is a measurable subset of $[0,1]$. We also let $[n]$ denote the discrete interval $\{1, 2, \ldots , n\}$.  Finally, we shall use standard Landau notation: given functions $f,g:\ \mathbb{N} \rightarrow \mathbb{R}^+$, we write $f=o(g)$ for $\lim_{n\rightarrow \infty} f(n)/g(n)=0$ and $f=O(g)$ if there exists a constant $C>0$ such that $\limsup_{n\rightarrow \infty}f(n)/g(n)\leq C$. Further we write $f=\omega(g)$ for $g=o(f)$, $f=\Omega(g)$ for $g=O(f)$. 
\subsubsection{Set sequences equipped with embeddings (ssee)}\label{subsection: definition of ssee}
We begin by recalling the definition of a \emph{set sequence equipped with embeddings (ssee)}, and that of a \emph{good ssee}, from the prequel~\cite{FalgasRavryOconnellUzzell18} to this paper.
\begin{definition}[ssee]\label{definition: ssee}
	A \emph{set-sequence equipped with embeddings}, or \emph{ssee}, is a sequence~$\V=(V_n)_{n\in \N}$ of sets $V_n$, together with for every $N\leq n$ a collection $\binom{V_n}{V_N}$ of injections~$\phi: \ V_N \to V_n$. We refer to the elements of $\binom{V_n}{V_N}$ as \emph{embeddings} of $V_N$ into $V_n$.
\end{definition}
\begin{definition}[Intersecting embeddings]
	Let $\V=(V_n)_{n \in \N}$ be a ssee. Let $N_1$,~$N_2 \leq n$.  An \emph{$i$-intersecting embedding} of~$(V_{N_1}, V_{N_2})$ into $V_n$ is a function $\phi: \ V_{N_1}\sqcup V_{N_2}\rightarrow V_n$ such that:
	\begin{enumerate}[(i)]
		\item the restriction of $\phi$ to  $V_{N_1}$ lies in $\binom{V_n}{V_{N_1}}$, and the restriction of $\phi$ to $V_{N_2}$ lies in $\binom{V_n}{V_{N_2}}$;
		\item $\vert \phi(V_{N_1})\cap \phi(V_{N_2})\vert=i$.
	\end{enumerate}
	We denote by $I_i\bigl( (V_{N_1}, V_{N_2}), V_n\bigr)$ the number of $i$-intersecting embeddings of $(V_{N_1}, V_{N_2})$ into $V_n$, and set
	\[I(N, n):=\sum_{1<i<\vert V_N\vert} I_i\bigl( (V_{N}, V_{N}), V_n\bigr).\]		
\end{definition}
\begin{definition}[Good ssee]\label{definition: good ssee}
	A ssee  $\V$ is \emph{good} if it satisfies the following conditions:
	\begin{enumerate}[(i)]
		\item $\vert V_n\vert \rightarrow \infty$ (`the sets in the sequence become large');
		\item for all $N \in \N$ with $\vert V_N\vert >1$,  $\bigl\vert \binom{V_n}{V_N} \bigr\vert \gg \vert V_n\vert $ (`on average, vertices in $V_n$ are contained in many embedded copies of $V_N$');
		\item for all $N \in \N$ with $\vert V_N\vert >1$, $\Bigl(\vert V_n\vert I(N,n)\Bigr)\Big/ \bigl\vert \binom{V_n}{V_N} \bigr\vert ^2 \rightarrow 0$ as $n\rightarrow \infty$ (`most pairs of embeddings of~$V_N$ into $V_n$ share at most one vertex').
	\end{enumerate}	
\end{definition}
\noindent The notion of a ssee covers a wide variety of well-studied structures: all of the following are examples of good ssee-s:
\begin{itemize}
\item $V_n$ is the edge-set of the complete graph on $n$ vertices $E(K_n)$, and $\binom{V_n}{V_N}$ is the collection of maps $E(K_N)\rightarrow E(K_n)$ corresponding to the collection of graph isomorphisms from $K_N$ into $K_n$;
\item $V_n$ is $\{0,1\}^n$, the vertex-set of the $n$-dimensional hypercube $Q_n$, and $\binom{V_n}{V_N}$ is the collection of graph isomorphisms from $Q_N$ into $Q_n$;
\item $V_n$ is $[n]$, the interval of the first $n$ natural numbers, and $\binom{V_n}{V_N}$ is the collection of all injections $\phi$ sending $[N]$ into an arithmetic progression of $[n]$ of length $N$, i.e. $\phi: x \mapsto a+xd$ where $a,d$ are fixed non-negative integers and $d>0$;
\item $V_n$ is ${\left(\mathbb{F}_p\right)}^n$, where $p$ is a prime and $\mathbb{F}_p$  is the finite field with $p$ elements, and $\binom{V_n}{V_N}$ is the collection of shifts of injective additive homomorphisms from ${\left(\mathbb{F}_p\right)}^N$ to  ${\left(\mathbb{F}_p\right)}^N$-subgroups of ${\left(\mathbb{F}_p\right)}^n$ (i.e. the collection of all maps $\phi: \  \mathbf{a} +\psi(\mathbf{x})$, where $\mathbf{a}\in {\left(\mathbb{F}_p\right)}^n$, $\psi: \  {\left(\mathbb{F}_p\right)}^N \rightarrow  {\left(\mathbb{F}_p\right)}^n$ is injective and satisfies $\psi(\mathbf{x}+\mathbf{y})=\psi(\mathbf{x})+\psi(\mathbf{y})$; the image of ${\left(\mathbb{F}_p\right)}^N$ under $\phi$ is thus a coset of an ${\left(\mathbb{F}_p\right)}^N$-subgroup of ${\left(\mathbb{F}_p\right)}^n$);
\item somewhat similar to the example above,  $V_n$ is ${\left(\mathbb{F}_2\right)}^n\setminus\{\mathbf{0}\}$, and $\binom{V_n}{V_N}$ is the set of linear isomorphisms from ${\left(\mathbb{F}_2\right)}^N$ to $n$-dimensional linear subspaces of ${\left(\mathbb{F}_2\right)}^n$, restricted to ${\left(\mathbb{F}_2\right)}^N\setminus\{\mathbf{0}\}$ (this example corresponds to \emph{simple binary matroids} $M: \ {\left(\mathbb{F}_2\right)}^n\setminus\{\mathbf{0}\}\rightarrow\{0,1\}$, whose hereditary properties were recently investigated by Grosser, Hatami, Nelson and Norin~\cite{GrosserHatamiNelsonNorin21});
\item $V_n$ is the collection $\mathcal{P}([n])$ of all subsets of $[n]$, viewed as a poset under the subset relation, and $\binom{V_n}{V_N}$ is the collection of all injective poset homomorphisms from $\mathcal{P}([N])$ into $\mathcal{P}([n])$ (these homorphisms can be counted using~\cite[Theorem 4.1]{FalgasRavryMarkstromTreglownZhao20}, from which properties (ii) and (iii) follow easily);

\end{itemize}

\subsubsection{$[0,1]$-decorations, entropy, hereditary properties}\label{definition of [0,1]-decorations, etc}
\noindent We now generalise a number of definitions from~\cite{FalgasRavryOconnellUzzell18} to the setting of $[0,1]$-decorated ssee-s. In what follows, we write $(v_i)_{i\in I}$ to denote a vector $v$ whose coordinates are labelled with elements of some index set $I$, and refer to such vectors as \emph{$I$-indexed vectors}. Given a set $S$, we also write $S^I$ for the collection of $I$-indexed vectors all of whose coordinates take values in $S$, i.e. for the $I$-indexed Cartesian product $\prod_{i\in I}S$.
\begin{definition}[{$[0,1]$}-decorated sets and set sequences]\label{definition: decorated set sequences}
	Given a set $V$, a \emph{$[0,1]$-decoration of~$V$} is an element $c\in[0,1]^V$, i.e.~a function $c: \ V\rightarrow [0,1]$.  Given a set-sequence $\V=(V_n)_{n\in \N}$, a \emph{$[0,1]$-decoration of~$\V$} is a sequence $\mathbf{c}=(c_n)_{n\in \N}$, where for each $n\in \N$, $c_n$ is a $[0,1]$-decoration of~$V_n$. If $\V$ is a ssee, we call $\mathbf{c}$ a \emph{$[0,1]$-decorated ssee}.
\end{definition}
\noindent A $[0,1]$-decorated set is just a function from the set into $[0,1]$. Our interest in this paper is the extent to which such hereditary families of such functions can be approximated by (rectilinear) \emph{boxes}, which we define below.
\begin{definition}[Boxes, cylinders]\label{definition: boxes, cylinders}
	A  \emph{box} in $V_n$ is a Cartesian product of the form
	\[b:=\prod_{i\in V_n} A_i,\]
	where for each $i$, $A_i$ is a measurable subset of $[0,1]$.  (Thus $b$ is a collection of $V_n$-indexed vectors.) We write $\Boxes(V_n)$ for the collection of all boxes in $V_n$.  A \emph{$d$-cylinder} is a box where all but at most $d$ of the $A_i$ are equal to $[0,1]$.

	 A cylinder or box is said to be \emph{simple}  if for every $i\in V_n$, its projection onto coordinate $i$ is a finite union of intervals.  Further, a simple box is called \emph{$k$-rational} if each of these intervals is of the form~$[\frac{x}{k}, \frac{y}{k}]$, for some integers $0\leq x<y\leq k$. 
\end{definition}
\begin{definition}[Volume, entropy, density]\label{definition: volume, entropy density}
	Given a measurable body $b\subseteq [0,1]^{V_n}$, we denote its Lebesgue measure by $\vol(b)$. So for instance if $b=\prod_{i\in V_n} A_i$ is a box, its volume is $\vol(b)=\prod_{i\in V_n} \vert A_i\vert$.

	We also consider the volume of lower-dimensional bodies obtained from $b$ by fixing the coordinates inside some subset $I\subseteq V_n$; we then denote the corresponding $\vert V_n\setminus I\vert$-dimensional volume (with respect to the Lebesgue measure) by $\vol_{V_n\setminus I}$. So for instance for $v_0\in V_n$ 
	\[\vol_{V_n \setminus\{ v_0\}}(\{c\in b: \ c_{v_0}=1/2\}) \]
	denotes the volume of the $(\vert V_n\vert-1)$-dimensional object obtained by taking the intersection of $b$ with the hyperplane $\{ x\in \mathbb{R}^{V_n}:\ x_{v_0}=1/2\}$.

	Further, for a measurable body $b\subseteq [0,1]^{V_n}$, we define the \emph{entropy} of $b$ as
	\[\Ent(b):= -\log \vol(b).\]
	Finally, the \emph{density} of $b$ is
	\[d(b):= \vol(b)^{1/\vert V_n\vert}. \] 
	Observe that this latter quantity is an element of $[0,1]$, and that $d(b)= e^{-\Ent(b)/\vert V_n\vert}$.
\end{definition}
\noindent We now turn to the problem of defining what we mean by a \emph{hereditary} family of functions on a ssee. To do this, we first define a notion of projection inherited from the embeddings associated to the ssee.
\begin{definition}[Projections, lifts, shadows]\label{definition: projections}
	Let $\V=(V_n)_{n \in \N}$ be a ssee. Given an embedding~$\phi\in \binom{V_n}{V_N}$ and  a measurable body $b\subseteq [0,1]^{V_n}$, we denote by $b_{\downarrow \phi}$ the measurable subset of $[0,1]^{V_N}$ given by 
	\[b_{\downarrow \phi} := \left \{ c\in [0,1]^{V_N} \ : \  \vol_{V_n\setminus \phi(V_N)} \left(  \left \{ \tilde{c}\in b \ : \ \tilde{c}_{\phi(i)}= c_i \ \forall i\in V_N \right \} \right) >0 \right \}.\]
	So for example, given a box $b=\prod_{i \in V_n} A_i$, we have 
	\[  b_{\downarrow \phi}:=\begin{cases} 
	\prod_{i \in V_N} A_{\phi(i)} & \text{ if } \vol(A_j)>0 \ \forall j \in V_n \setminus \phi(V_N); \\
	\prod_{i \in V_N} \emptyset & \text{ otherwise.}
	\end{cases} \] 
	We call $b_{\downarrow\phi}$ the (strict) \emph{$\phi$-projection} of $b$. 
		
	 Conversely, given an embedding~$\phi\in \binom{V_n}{V_N}$ and  a measurable body $b\subseteq [0,1]^{V_N}$, we denote by $b_{ \uparrow \phi}$ the measurable subset of $[0,1]^{V_n}$ induced by $\phi$, namely 
	\[b_{\uparrow \phi} := \left\{c\in [0,1]^{V_n}: \ \exists \tilde{c}\in b \ \text{ such that } \forall i\in V_N, \ \tilde{c}_{i}= c_{\phi(i)}\right\}.\]	
	So for example, given a box $b=\prod_{i \in V_N} A_i$, we have $b_{\uparrow \phi}:=\prod_{i \in V_n} B_i$, where 
	\[ B_i := \begin{cases}
	A_{\phi^{-1}(i)} & \text{ if } i \in \phi(V_N); \\
	[0,1] & \text{ otherwise.}
	\end{cases} \]
	We call $b_{\uparrow \phi}$ the \emph{$\phi$-lift} of $b$. 
		
	Given $b\in [0,1]^{V_n}$ and $N\leq n$, we define the \emph{lower shadow}  of $b$ in $[0,1]^{V_N}$ by
	\[\partial^-_{V_N}(b):= \bigcup_{\phi \in \binom{V_n}{V_N}} b_{\downarrow \phi}.\]	
	Similarly, given $b\in [0,1]^{V_N}$ and $n\geq N$, we define the \emph{upper shadow} of $b$ in $[0,1]^{V_n}$ by:
	\[\partial^+_{V_n}(b):= \bigcup_{\phi \in \binom{V_n}{V_N}} b_{\uparrow \phi}.\]
\end{definition}
\noindent 
Observe that if $b$ is a box in $V_n$ and $\phi\in \binom{V_n}{V_N}$, then $b_{\downarrow \phi}\in \Boxes(V_N)$.  Conversely, if $b$ is a box in $V_N$ and  $\phi\in \binom{V_n}{V_N}$, then $b_{\uparrow \phi}$ is a box (in fact a $\vert V_N\vert$-cylinder) in $V_n$.  Also, for any body $b\subseteq [0,1]^{V_N}$ and any embeddings $\phi \in \binom{V_n}{V_N}$ , $\psi \in \binom{V_N}{V_{n'}}$ we have the relations
\[  (b_{\uparrow \phi})_{\downarrow \phi}= b \qquad \textrm{ and } \qquad \vol\left(b\setminus(b_{\downarrow \psi})_{\uparrow \psi} \right)=0. \]
\begin{definition}[Properties, hereditary properties]\label{definition: [0,1]-decorated properties}
	Let $\V=(V_n)_{n \in \N}$ be a ssee. A \emph{$[0,1]$-decoration property of $\V$} is a sequence $\PP=(\PP_n)_{n\in \N}$, where $\PP_n$ is a measurable subset of $[0,1]^{V_n}$. A $[0,1]$-decoration property is \emph{hereditary} if for all $n\geq N$, the upper shadow of $\left([0,1]^{V_N}\setminus \PP_N\right)$ in $[0,1]^{V_n}$ is a subset of $\left([0,1]^{V_n}\setminus \PP_n\right)$.
	
\end{definition}
\noindent In other words, a $[0,1]$-decoration property~$\PP$ of $\V$ is hereditary if its complement is closed under taking upper shadows/$\phi$-lifts. In particular, this implies that $\PP$ itself is closed under taking lower shadows/$\phi$-projections. (Note however that the converse fails: a property being closed under taking lower shadows does \emph{not} imply its complement is closed under taking upper shadows.)

As an illustrative example, observe that our abstract definition above generalises the graph theoretic notion of a hereditary property. Indeed let $\left(\mathcal{G}_n\right)_{n\in \mathbb{N}}$ be a sequence of subgraphs of $K_n$, the complete graph on $n$ vertices, that is closed under taking induced subgraphs (for instance, one could take $\mathcal{G}_n$ to be the collection of all triangle-free subgraphs of $K_n$, or of all the subgraphs of $K_n$ containing no induced cycle of length $5$, say). Letting $V_n$ denote the edge-set of $K_n$, we can encode a graph $G\in \mathcal{G}_n$ as a box $b_G=\prod_{e \in E(K_n)}A_e$ by letting $A_e=[\frac{1}{2}, 1]$ if $e\in G$ and setting $A_e=[0,\frac{1}{2})$ otherwise. Letting $\mathcal{P}_n:=\bigcup_{G\in \mathcal{G}_n} b_G$, we have that the $[0,1]$-decoration property $\mathcal{P}_n$ of the ssee $\mathcal{V}=(V_n)_{n\in \mathcal{N}}$ is hereditary in the sense of Definition~\ref{definition: [0,1]-decorated properties}.

One class of hereditary $[0,1]$-decoration properties will be of particular interest to us in this paper.
\begin{definition}[Forbidden projections]\label{definition: forb(F)}
	Let $\V=(V_n)_{n \in \N}$ be a ssee.  Let $b\subseteq [0,1]^{V_N}$. For $n\geq N
	$, we say $b' \subseteq [0,1]^{V_n}$ is \emph{$b$-free} if 	$\partial_{V_n}^+(b)\cap b'=\emptyset$. We denote by $\Forb(b)$ the hereditary $[0,1]$-decoration property of being $b$-free, i.e.~for all $n\geq N$
	\[\Forb(b)_n= [0,1]^{V_n}\setminus\partial_{V_n}^+(b) .\]
\end{definition}

\begin{definition}[Extremal entropy]\label{definition: extremal entropy/relative to possee}
	Let $\V=(V_n)_{n \in \N}$ be a ssee, and let  $\PP=(\PP_n)_{n\in \N}$ be a $[0,1]$-decoration property of $\V$. The \emph{extremal entropy} of $\PP$ \emph{relative to $\V$} is
	\[\ex(\V, \PP)_n=\ex(V_n, \PP_n):=\inf\left\{\Ent(b) \,: \, b\in \Boxes(V_n), \ \vol(b\setminus \PP_n)=0\right\}.\]
\end{definition}
\noindent Thus  $\exp\left(-\ex(V_n, \PP_n)\right)$ is precisely the volume of the largest box in $V_n$ which (up to a zero-measure set) is contained inside $P_n$. 

\subsubsection{Rectilinear approximation and volume estimates for hereditary bodies}\label{subsection: container theorems}

\noindent Let $\V$ be a good ssee and let $n,N \in \mathbb{N}$. Given  a function $f:\ \binom{V_n}{V_N}\rightarrow \mathbb{R}$, we write $\mathbb{E}_{\phi} f(\phi)$ for the expected value of $f(\phi)$ over $\phi \in \binom{V_n}{V_N}$ chosen uniformly at random. 

Our first result is a geometric approximation property for a hereditary body.
\begin{theorem}\label{theorem: containers for Forb(finite union of boxes)}
	Let $\V$ be a good ssee.  Let $\mathcal{F}$ be a nonempty finite family of simple boxes in $[0,1]^{V_N}$, for some $N \in \N$. Set $b=\bigcup_{b_f\in \mathcal{F}}b_f$ and $\PP=\Forb(b)$. Then for every $\varepsilon>0$, there exists $n_0>0$ such that for any $n\geq n_0$ there exists a collection $\containers$ of simple boxes in $[0,1]^{V_n}$ satisfying:
	\begin{enumerate}[(i)]
		\item $\PP_n \subseteq \bigcup_{c\in \containers} c$;
		\item for every $c\in \containers$, $\mathbb{E}_{\phi} \vol\left(c_{\downarrow \phi}\cap b\right)<\varepsilon$; 
		
		\item $\vert \containers\vert\leq  e^{\varepsilon \vert V_n\vert}$.
	\end{enumerate}		
\end{theorem}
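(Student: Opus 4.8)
The plan is to encode the problem as a hypergraph independence problem and apply the hypergraph container theorem, in the spirit of~\cite{FalgasRavryOconnellUzzell18}, but working with a fine rational discretisation of $[0,1]^{V_n}$ so that boxes become combinatorial objects. Fix a large integer $k=k(\eps)$ (to be chosen). First I would approximate each simple box $b_f\in\family$ from inside by a $k$-rational box $b_f^{(k)}$ with $\vol(b_f\setminus b_f^{(k)})$ small, and similarly replace $b$ by the $k$-rational body $b^{(k)}=\bigcup_f b_f^{(k)}$; the error introduced this way can be absorbed into $\eps$ at the end, using monotonicity of $\partial^+$. Now tile $[0,1]^{V_n}$ by the $k^{\vert V_n\vert}$ grid cells $\prod_{i\in V_n}[\frac{x_i}{k},\frac{x_i+1}{k})$, and build an $r$-uniform hypergraph $H_n$ (with $r=\vert V_N\vert$) whose vertex set is the set of grid cells and whose edges record, for each embedding $\phi\in\binom{V_n}{V_N}$, each choice of a grid cell in the $\phi$-copy of $[0,1]^{V_N}$ lying inside (a cell of) $b^{(k)}$: an independent set in $H_n$ then corresponds, up to the rational approximation, to a union of grid cells avoiding $\partial^+_{V_n}(b^{(k)})$, i.e. to (a discretisation of) a $b$-free body. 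Concretely, every point of $\PP_n$ lies, up to measure zero, in a grid cell belonging to a maximal independent set.

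The key analytic input is \textbf{supersaturation}: I need that every union of grid cells of volume bounded below by $e^{-\delta\vert V_n\vert}$ more than the extremal value spans $\Omega(e(H_n))$ edges, or more precisely that the number of $b^{(k)}$-copies inside a body of density bounded away from the extremal density is $\Omega(\vert V_n\vert^{r})$ times the right normalising factor. This is exactly where goodness of the ssee enters: conditions (ii) and (iii) of Definition~\ref{definition: good ssee} guarantee that $\binom{V_n}{V_N}$ is large and that a random pair of embeddings of $V_N$ is almost surely $1$-intersecting, which lets one run a standard Cauchy--Schwarz / second-moment argument (as in the prequel) to show that if $\mathbb{E}_\phi\vol(b'_{\downarrow\phi}\cap b^{(k)})\geq\eps$ then $b'$ already contains a positive-measure $\phi$-lift of $b^{(k)}$, hence many disjoint grid-cell copies. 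Translated to $H_n$, this says $H_n$ satisfies the co-degree / ``$(\tau,\ldots)$-smoothness'' hypotheses of the Balogh--Morris--Samotij / Saxton--Thomason container theorem with $\tau=o(1)$.

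Applying the container theorem to $H_n$ produces a family $\mathcal{C}^*$ of at most $e^{o(\vert V_n\vert\log k)}\le e^{\eps\vert V_n\vert/2}$ sets of grid cells, each spanning $o(e(H_n))$ edges, covering all independent sets. For each such set of cells $C^*$ I take the box-cover $\containers(C^*)$ to be the (at most $k^{\vert V_n\vert}$ possible, but here just one) union — actually a bounded-size cover by simple boxes obtained by merging cells — and let $\containers$ be the union over $\mathcal{C}^*$; after re-enlarging slightly to genuine simple boxes one keeps $\vert\containers\vert\le e^{\eps\vert V_n\vert}$, giving (iii). Property (i) is immediate from the covering property of containers plus the fact that points of $\PP_n$ sit in independent cells. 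For (ii): a container having $o(e(H_n))$ edges means exactly that, averaged over $\phi$, the $\phi$-projection of the corresponding body meets only an $o(1)$-fraction of the volume of $b^{(k)}$; combined with the initial $k$-rational approximation error this yields $\mathbb{E}_\phi\vol(c_{\downarrow\phi}\cap b)<\eps$ for $n$ large. I expect the main obstacle to be the bookkeeping in the supersaturation step — showing cleanly that the normalised edge count of a sub-body of $H_n$ is controlled by $\mathbb{E}_\phi\vol(\cdot_{\downarrow\phi}\cap b)$ uniformly in $k$, and choosing the order of quantifiers ($\eps$, then $k$, then $n_0$) so that all three error sources (rational approximation, container slack, goodness $o(1)$-terms) collapse into the single $\eps$.
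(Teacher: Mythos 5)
Your high-level strategy --- approximate $b$ from within by a $k$-rational body, pass to a discrete container theorem, convert the resulting containers back to simple boxes, and chase the error terms --- is the same as the paper's, but the paper's execution is a much shorter reduction. Once $\tilde b \subseteq b$ with $\vol(b\setminus\tilde b)<\varepsilon/2$ is chosen, the paper simply encodes the $k$-rational cells of $\tilde b$ as a forbidden family $\mathcal{F}\subseteq[k]^{V_N}$ of colourings, black-boxes the already-proved discrete container theorem for $[k]$-decorated ssee-s (Theorem~\ref{theorem: ssee container} from~\cite{FalgasRavryOconnellUzzell18}) with $\varepsilon'=\min(\varepsilon/\log k,\varepsilon/2)$, and then converts each template $t$ into the simple box $c^t=\prod_{v\in V_n}\bigcup_{i\in t(v)}[\tfrac{i-1}{k},\tfrac{i}{k}]$. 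You are instead re-deriving that discrete theorem from the Balogh--Morris--Samotij / Saxton--Thomason hypergraph machinery, which is precisely the work the prequel already did. Relatedly, what you call ``supersaturation'' is not a hypothesis of either container theorem; supersaturation only enters later, in Corollary~\ref{corollary: counting for ssee} and Theorem~\ref{theorem: counting for homogeneous ssee}. What the container theorem needs are codegree/smoothness conditions, and those --- not supersaturation --- are what Definition~\ref{definition: good ssee}(ii)--(iii) supply.

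The genuine gap is your hypergraph. You declare the vertex set to be the $k^{|V_n|}$ full-dimensional grid cells of $[0,1]^{V_n}$. With that vertex set the edge structure you describe is not $r$-uniform with $r=|V_N|$ (whether a single $|V_n|$-dimensional cell lies in $\partial^+_{V_n}(b^{(k)})$ is a property of that one cell, so the forbidden structure would be $1$-uniform), and, more fatally, a container theorem applied to a hypergraph on $k^{|V_n|}$ vertices produces a container family whose size is exponential in $k^{|V_n|}$, not in $|V_n|$, which is hopeless for conclusion (iii). The vertex set you need is $V_n\times[k]$ --- one vertex per pair (coordinate, subinterval) --- so that a colouring corresponds to a transversal of size $|V_n|$, a template to a subset of $V_n\times[k]$, and the edge attached to $\phi\in\binom{V_n}{V_N}$ and $f\in\mathcal{F}$ is $\{(\phi(v),f(v)):v\in V_N\}$, which genuinely has $r=|V_N|$ vertices. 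Your claimed container bound of $e^{o(|V_n|\log k)}$ is only consistent with this corrected vertex set, not with the one you wrote down, so I suspect you had the right picture in mind; as written, though, the construction does not work, and in any case the cleaner route is to cite Theorem~\ref{theorem: ssee container} rather than rebuild it.
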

\noindent In other words there exists a \emph{small} (property (iii)) collection of simple boxes such that their union \emph{contains} the body $\PP_n$ (property (i)). Further, each of them has a lower shadow \emph{almost disjoint} from $b$ (property (ii)) --- so these boxes ``almost'' lie in $\PP_n$. The union of these boxes is thus a ``good'' approximation for $\PP_n$.

 It is worth noting that of course any measurable body can be finely approximated by a collection of simple boxes --- the power of the container theory of Balogh--Morris--Samotij and Saxton--Thomason is the bound (iii) they give on the number of boxes required.

Building on Theorem~\ref{theorem: containers for Forb(finite union of boxes)}, we prove:
\begin{theorem}\label{theorem: containers for hereditary bodies}
	Let $\V$ be a good ssee.  Let $\PP$ be a hereditary property of $[0,1]$-decorations of $\V$ and let $N\in \mathbb{N}$. Then for every $\varepsilon>0$, there exists an integer
	 $n_0>N$ such that for any $n\geq n_0$ there exists a collection~$\containers$ of simple boxes in $[0,1]^{V_n}$ satisfying:
	\begin{enumerate}[(i)]
		\item $\vol \left(\PP_n \setminus \bigcup_{c\in \containers} c\right)=0$;
		\item for every $c\in \containers$, $\mathbb{E}_{\phi} \vol\left(c_{\downarrow \phi}\setminus \PP_N\right)<\varepsilon$;	
		\item $\vert \containers\vert\leq  e^{\varepsilon \vert V_n\vert}$.
	\end{enumerate}		
\end{theorem}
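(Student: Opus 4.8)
The plan is to deduce Theorem~\ref{theorem: containers for hereditary bodies} from Theorem~\ref{theorem: containers for Forb(finite union of boxes)} by a compactness-plus-approximation argument at the base level $N$. The point is that a general hereditary property need not be of the form $\Forb$ of a \emph{finite} union of simple boxes, but its complement at level $N$ is an open-up-to-measure-zero set that can be exhausted from inside by finitely many simple boxes, and forbidding those boxes gives a property sandwiching $\PP$ closely enough.

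First I would fix $\eps>0$ and work at level $N$. The complement $[0,1]^{V_N}\setminus \PP_N$ is measurable, so by inner regularity and the fact that simple boxes (finite unions of dyadic-type boxes) generate the Lebesgue $\sigma$-algebra, I can choose a \emph{finite} family $\family$ of simple boxes in $[0,1]^{V_N}$ whose union $b=\bigcup_{b_f\in\family} b_f$ satisfies $b\subseteq [0,1]^{V_N}\setminus\PP_N$ (up to shrinking the boxes slightly we may even assume $\vol(b\setminus([0,1]^{V_N}\setminus\PP_N))=0$ exactly, or simply that $b$ lies inside the complement) and $\vol\bigl(([0,1]^{V_N}\setminus\PP_N)\setminus b\bigr)<\eps/2$. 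Here there is a small subtlety: to invoke heredity of $\PP$ I want $b$ itself to be $\PP_N$-avoiding, which is arranged by the containment $b\subseteq[0,1]^{V_N}\setminus\PP_N$; and I want $\partial^+_{V_n}(b)$ disjoint from $\PP_n$, which holds because $\PP$ is hereditary (the upper shadow of the complement at level $N$ lies in the complement at level $n$), hence $\PP_n\subseteq \Forb(b)_n$ for all $n\geq N$.

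Next I apply Theorem~\ref{theorem: containers for Forb(finite union of boxes)} to this finite family $\family$ with error parameter $\eps' := \eps/2$ (or a suitable smaller constant; see below). This yields, for all $n\geq n_0$, a collection $\containers$ of simple boxes in $[0,1]^{V_n}$ with $|\containers|\leq e^{\eps'|V_n|}\leq e^{\eps|V_n|}$, with $\Forb(b)_n\subseteq\bigcup_{c\in\containers}c$, and with $\E_\phi\vol(c_{\downarrow\phi}\cap b)<\eps'$ for every $c\in\containers$. Since $\PP_n\subseteq\Forb(b)_n$, property~(i) of the theorem follows immediately (indeed with the stronger conclusion that $\PP_n$ is literally contained in the union, not just up to measure zero). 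For property~(ii) I estimate, for each container $c$,
\[
\E_\phi\vol\bigl(c_{\downarrow\phi}\setminus\PP_N\bigr)
\leq \E_\phi\vol\bigl(c_{\downarrow\phi}\cap b\bigr)
 + \E_\phi\vol\bigl(c_{\downarrow\phi}\cap(([0,1]^{V_N}\setminus\PP_N)\setminus b)\bigr)
< \eps' + \vol\bigl(([0,1]^{V_N}\setminus\PP_N)\setminus b\bigr)
< \tfrac{\eps}{2}+\tfrac{\eps}{2}=\eps,
\]
using $\vol(c_{\downarrow\phi})\leq 1$ to bound the second expectation by the (fixed, $\phi$-independent) measure of the leftover piece $([0,1]^{V_N}\setminus\PP_N)\setminus b$. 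This gives~(ii), and~(iii) is immediate, completing the proof.

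The main obstacle I anticipate is the first step: getting a \emph{finite} simple-box family $\family$ that both approximates the complement of $\PP_N$ to within $\eps/2$ in measure \emph{and} is genuinely contained in the complement (so that heredity applies and the upper shadow is $\PP_n$-avoiding on the nose, making property~(i) an exact containment). Inner regularity of Lebesgue measure gives a compact set $K\subseteq[0,1]^{V_N}\setminus\PP_N$ with $\vol(([0,1]^{V_N}\setminus\PP_N)\setminus K)<\eps/4$; covering $K$ by small open boxes, extracting a finite subcover, and then slightly shrinking and dyadic-rounding the boxes to keep them inside the open set $[0,1]^{V_N}\setminus\PP_N$ (using compactness of $K$ to control how much room there is) yields the desired $\family$, at the cost of another $\eps/4$ in measure. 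One must be a little careful that "hereditary" in Definition~\ref{definition: [0,1]-decorated properties} is phrased via upper shadows of the complement, so one should double-check that $b\subseteq[0,1]^{V_N}\setminus\PP_N$ indeed forces $\partial^+_{V_n}(b)\subseteq[0,1]^{V_n}\setminus\PP_n$; this is exactly the monotonicity built into the definition, applied box-by-box and taking unions. Everything else is bookkeeping with the linearity of $\E_\phi$ and the trivial bound $\vol(c_{\downarrow\phi})\le 1$.
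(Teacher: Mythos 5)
Your overall strategy --- approximate the complement of $\PP_N$ by a finite union of simple boxes $\tilde b$, apply Theorem~\ref{theorem: containers for Forb(finite union of boxes)} to $\Forb(\tilde b)$, then transfer --- is exactly the paper's strategy, and your derivations of (ii) and (iii) from that point on are essentially the paper's. However, there is a genuine gap in the construction of the approximating family, and it causes you to claim a conclusion that the paper explicitly shows is false.

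You insist on finding a finite union of simple boxes $b$ with the \emph{exact} containment $b\subseteq[0,1]^{V_N}\setminus\PP_N$ (so that heredity applies on the nose and you can conclude $\PP_n\subseteq\Forb(b)_n$ literally, and hence literal coverage of $\PP_n$ by the containers). But $\PP_N$ is merely measurable, so $[0,1]^{V_N}\setminus\PP_N$ need not be open, and your inner-regularity argument --- ``covering $K$ by small open boxes \ldots to keep them inside the open set $[0,1]^{V_N}\setminus\PP_N$'' --- breaks precisely because you have no open set to stay inside. This is not a technicality: the paper's remark following the theorem statement gives the example $\PP_n=\{x: x_i\in[0,\tfrac12]\cup\mathbb{Q}\}$, for which the complement at level $N$ contains \emph{no} simple box of positive measure, so your $b$ would have to be a null set while $\vol([0,1]^{V_N}\setminus\PP_N)=1-2^{-|V_N|}$, making a sub-$\eps/2$ approximation impossible. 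For this same example the paper observes that $\PP_n$ cannot be covered exactly by a small family of simple boxes satisfying (ii), so your parenthetical ``stronger conclusion'' of literal containment is false in general; conclusion (i) genuinely needs the measure-zero slack.

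The fix, which is what the paper does, is to demand only $\vol\bigl(\tilde b\setminus([0,1]^{V_N}\setminus\PP_N)\bigr)=0$ rather than exact containment (this is achievable from measurability), and then to apply heredity not to $\tilde b$ but to the \emph{full} complement $b:=[0,1]^{V_N}\setminus\PP_N$, obtaining $\partial^+_{V_n}(b)\cap\PP_n=\emptyset$. One then passes from $b$ to $\tilde b$ using that lifting preserves measure zero: since $\vol(\tilde b\setminus b)=0$, each $(\tilde b\setminus b)_{\uparrow\phi}$ is null, hence $\vol\bigl(\partial^+_{V_n}(\tilde b)\setminus\partial^+_{V_n}(b)\bigr)=0$, giving $\vol(\PP_n\setminus\Forb(\tilde b)_n)=0$ and then $\vol(\PP_n\setminus\bigcup_{c\in\containers}c)=0$. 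Your estimate for (ii) and the count (iii) then go through unchanged.
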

\noindent We remark that we cannot replace (i) by a containment condition $\PP_n \subseteq \bigcup_{c\in \containers}c$, and that we must allow for an exceptional zero-measure set not covered by the simple boxes in $\containers$. For instance suppose $\PP$ consisted of all decorations $x\in [0,1]^{V_n}$ with $x_i \in [0, \frac{1}{2}]\cup \mathbb{Q}$. Then clearly we cannot both cover all of $\PP_n$ with simple boxes and still achieve (ii).

Observe also that condition (ii) may be interpreted as follows: suppose we take a point $\mathbf{x}$ chosen uniformly at random from $c$ and an embedding $\phi$ uniformly at random from $\binom{V_n}{V_N}$. This defines a random point $\mathbf{y}\in [0,1]^{V_N}$, by setting $y_v= x_{\phi(v)}$ for all $v\in V_N$. Then condition (ii) is saying that the probability $\mathbf{y}$ fails to be in $\mathcal{P}_N$ is small (at most $\varepsilon$).

Provided  we have (a) a limiting density and (b) supersaturation for a $[0,1]$-decoration property in a good ssee, the container/geometric approximation theorem, Theorem~\ref{theorem: containers for hereditary bodies}, immediately implies a volume estimate for the $V_n$-dimensional body $\PP_n$, namely:
\begin{corollary}\label{corollary: counting for ssee}
	Let $\V$ be a good ssee. Let $\PP$ be a 	hereditary property of $[0,1]$-decorations of $\V$. Suppose in addition that the following hold:
	\begin{enumerate}[(a)]
		\item $\pi(\PP):=\lim_{n\rightarrow \infty}\ex(V_n, \mathcal{{P}}_n)/\vert V_n\vert$ exists;
		\item for all $\varepsilon>0$, there exist $\eta>0$ and positive integers $N\leq n_0$ such that if $n\geq n_0$ then for every $b\in \Boxes(V_n)$, if $\mathbb{E}_{\phi} \vol\left(b_{\downarrow \phi}\setminus \PP_N\right) <\eta$
		then $\Ent(b)> \left (\pi(\PP)-\varepsilon \right)\vert V_n\vert$.
	\end{enumerate}	
	Then
	\[\vol\left(\PP_n\right)= e^{-\bigl(\pi(\PP)+o(1)\bigr)\vert V_n\vert }.\]
\end{corollary}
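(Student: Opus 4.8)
The volume estimate is two‑sided, and I would establish the two bounds by entirely different means: the lower bound directly from the definition of extremal entropy together with hypothesis (a), and the upper bound by feeding Theorem~\ref{theorem: containers for hereditary bodies} into the supersaturation hypothesis (b). Neither direction requires anything beyond what is already in the excerpt.

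\emph{Lower bound.} Here hypothesis (b) and the container theorem play no role. By definition, $\ex(V_n,\PP_n)$ is the infimum of $\Ent(b)$ over boxes $b\in\Boxes(V_n)$ with $\vol(b\setminus\PP_n)=0$; for any such $b$ one has $\vol(\PP_n)\ge\vol(\PP_n\cap b)=\vol(b)=e^{-\Ent(b)}$, and optimising over $b$ gives $\vol(\PP_n)\ge e^{-\ex(V_n,\PP_n)}$. Hypothesis (a) then rewrites the exponent as $\ex(V_n,\PP_n)=\bigl(\pi(\PP)+o(1)\bigr)\vert V_n\vert$, so $\vol(\PP_n)\ge e^{-(\pi(\PP)+o(1))\vert V_n\vert}$.

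\emph{Upper bound.} This is where the work is. Fix $\varepsilon>0$. First I would apply (b) to this $\varepsilon$ to obtain $\eta>0$ (which I may shrink so that $\eta\le\varepsilon$) and an integer $N$, with the property that for $n$ large every box $b\in\Boxes(V_n)$ satisfying $\mathbb{E}_{\phi}\vol(b_{\downarrow\phi}\setminus\PP_N)<\eta$ has $\Ent(b)>(\pi(\PP)-\varepsilon)\vert V_n\vert$. Next — crucially, with \emph{this same} $N$ — I would invoke Theorem~\ref{theorem: containers for hereditary bodies} with container accuracy parameter $\eta$ in place of its ``$\varepsilon$''. For all sufficiently large $n$ this yields a family $\containers$ of simple boxes in $[0,1]^{V_n}$ with $\vol\bigl(\PP_n\setminus\bigcup_{c\in\containers}c\bigr)=0$, with $\mathbb{E}_{\phi}\vol(c_{\downarrow\phi}\setminus\PP_N)<\eta$ for every $c\in\containers$, and with $\vert\containers\vert\le e^{\eta\vert V_n\vert}$. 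By the choice of $\eta$ the middle condition is exactly the hypothesis of (b), so each $c\in\containers$ satisfies $\vol(c)=e^{-\Ent(c)}<e^{-(\pi(\PP)-\varepsilon)\vert V_n\vert}$. Since the boxes of $\containers$ cover $\PP_n$ up to a null set, a union bound gives
\[\vol(\PP_n)\le\vert\containers\vert\cdot\max_{c\in\containers}\vol(c)\le e^{\eta\vert V_n\vert}\cdot e^{-(\pi(\PP)-\varepsilon)\vert V_n\vert}\le e^{-(\pi(\PP)-2\varepsilon)\vert V_n\vert}.\]
As $\varepsilon>0$ was arbitrary this reads $\vol(\PP_n)\le e^{-(\pi(\PP)-o(1))\vert V_n\vert}$, and combining with the lower bound yields $\vol(\PP_n)=e^{-(\pi(\PP)+o(1))\vert V_n\vert}$.

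\emph{Main obstacle.} There is no substantial obstacle — this is precisely the ``immediately implies'' remark preceding the statement, and the plan above is essentially the whole argument. The single point requiring care is the order of the quantifiers: the integer $N$ produced by the supersaturation hypothesis (b) must be the one passed as input to Theorem~\ref{theorem: containers for hereditary bodies} (which is legitimate, as $N$ is a free parameter in that theorem), and the container accuracy parameter must be taken as small as $\eta$ (not merely as small as $\varepsilon$) so that property (ii) of that theorem matches the hypothesis of (b) verbatim.
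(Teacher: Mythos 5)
Your proposal is correct and follows essentially the same route as the paper's proof: apply Theorem~\ref{theorem: containers for hereditary bodies} with parameter $\min(\varepsilon,\eta)$ (you phrase this as shrinking $\eta\le\varepsilon$ first, which is equivalent), feed property (ii) of the resulting container family into hypothesis (b), and finish with a union bound over the $e^{\eta\vert V_n\vert}$ containers. The only cosmetic difference is that you spell out the trivial lower bound $\vol(\PP_n)\ge e^{-\ex(V_n,\PP_n)}$ explicitly, which the paper leaves implicit.
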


A natural question is whether we can give a simple criterion for satisfying assumptions (a) and~(b). In past applications, this has mostly been dealt with in an ad hoc manner. For example in~\cite{FalgasRavryOconnellUzzell18}, it was proved that these two assumptions were satisfied for vertex $k$-colourings  and edge $k$-colourings of both complete hypergraphs and hypercube graphs, but each case required a separate proof. One of our contributions in this paper is a very simple criterion on the family of embeddings which is sufficient to ensure (a) and (b) are satisfied. This criterion immediately applies to a wide class of structures, yielding volume estimate results of great generality.
\begin{definition}
	A ssee $\V$ is \emph{homogeneous} if for every $n\geq N$, every $x\in V_n$ is contained in the same, strictly positive number of embeddings $\phi(V_N)$, $\phi \in \binom{V_n}{v_N}$.
\end{definition}
\begin{proposition}\label{proposition: existence of limits from uniformity}
		Suppose $\V$ is a good homogeneous ssee. Then for every hereditary property $\PP$ of $[0,1]$-decorations of $\V$,  the sequence $\frac{\ex(V_n, \PP_n)}{\vert V_n\vert}$ is non-decreasing in $\mathbb{R}_{\geq 0}$. In particular, this sequence either converges to a limit~$\pi(\PP)$ or tends to infinity as $n\rightarrow \infty$.
\end{proposition}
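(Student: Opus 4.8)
The plan is to prove the stronger pointwise statement that $\dfrac{\ex(V_N,\PP_N)}{\vert V_N\vert}\leq \dfrac{\ex(V_n,\PP_n)}{\vert V_n\vert}$ for all $N\leq n$; monotonicity is the special case $n=N+1$, and the concluding dichotomy is then just the fact that a non-decreasing sequence in $[0,+\infty]$ converges there, the limit being either a finite value $\pi(\PP)\in\mathbb{R}_{\geq 0}$ or $+\infty$. Fix $N\leq n$. If $\ex(V_n,\PP_n)=+\infty$ the inequality is trivial, so assume it is finite and fix a box $b=\prod_{i\in V_n}A_i\in\Boxes(V_n)$ with $\vol(b\setminus\PP_n)=0$ and $\Ent(b)\leq \ex(V_n,\PP_n)+\delta$ for an arbitrary $\delta>0$; since $\Ent(b)<\infty$ we have $\vert A_i\vert>0$ for every $i\in V_n$, so in particular $b_{\downarrow\phi}=\prod_{i\in V_N}A_{\phi(i)}$ has positive volume for every $\phi\in\binom{V_n}{V_N}$.

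The first — and essentially only delicate — step is to check that each projected box $b_{\downarrow\phi}$ is an admissible competitor for $\ex(V_N,\PP_N)$, i.e.\ that $\vol\bigl(b_{\downarrow\phi}\setminus\PP_N\bigr)=0$. This is a short Tonelli argument using heredity. Identify $[0,1]^{V_n}\cong[0,1]^{V_N}\times[0,1]^{W}$ via $\phi$, where $W=V_n\setminus\phi(V_N)$, so that $b$ corresponds to $\bigl(\prod_{i\in V_N}A_{\phi(i)}\bigr)\times\bigl(\prod_{j\in W}A_j\bigr)$. If $c\in b_{\downarrow\phi}$ and $c\notin\PP_N$, then by Definition~\ref{definition: [0,1]-decorated properties} applied to $[0,1]^{V_N}\setminus\PP_N$ the whole $\phi$-lift $c_{\uparrow\phi}=\{c\}\times[0,1]^{W}$ is disjoint from $\PP_n$, hence so is the fibre $\{c\}\times\prod_{j\in W}A_j$ of $b$ over $c$, which therefore lies in $b\setminus\PP_n$. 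Integrating over $c$, $0=\vol(b\setminus\PP_n)\geq \vol\bigl(\prod_{j\in W}A_j\bigr)\cdot\vol_{V_N}\bigl(b_{\downarrow\phi}\setminus\PP_N\bigr)$, and since the first factor is positive we get $\vol_{V_N}\bigl(b_{\downarrow\phi}\setminus\PP_N\bigr)=0$. Consequently $\Ent(b_{\downarrow\phi})\geq \ex(V_N,\PP_N)$ for every $\phi\in\binom{V_n}{V_N}$.

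The remainder is bookkeeping with the homogeneity hypothesis. Averaging over a uniformly random $\phi$ and using $\Ent(b_{\downarrow\phi})=\sum_{i\in V_N}\bigl(-\log\vert A_{\phi(i)}\vert\bigr)$, I would reindex $\sum_{\phi}\Ent(b_{\downarrow\phi})$ by $x=\phi(i)\in V_n$: the coefficient of $-\log\vert A_x\vert$ is the number $m$ of embeddings $\phi$ with $x\in\phi(V_N)$, which by homogeneity is the same positive integer for every $x\in V_n$. Hence $\sum_{\phi}\Ent(b_{\downarrow\phi})=m\,\Ent(b)$, and double-counting pairs $(\phi,x)$ with $x\in\phi(V_N)$ gives $m\,\vert V_n\vert=\bigl\vert\binom{V_n}{V_N}\bigr\vert\,\vert V_N\vert$, so $\mathbb{E}_{\phi}\Ent(b_{\downarrow\phi})=\tfrac{\vert V_N\vert}{\vert V_n\vert}\Ent(b)$. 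Combining with $\mathbb{E}_{\phi}\Ent(b_{\downarrow\phi})\geq \ex(V_N,\PP_N)$ yields $\tfrac{\Ent(b)}{\vert V_n\vert}\geq \tfrac{\ex(V_N,\PP_N)}{\vert V_N\vert}$; taking the infimum over admissible $b$ and letting $\delta\to 0$ gives the claimed inequality, and the dichotomy follows as noted. I expect the Tonelli/heredity step to be the main point requiring care, while the rest is routine.
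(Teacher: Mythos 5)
Your proof takes essentially the same route as the paper: use homogeneity to show that, for an admissible box $b\in\Boxes(V_n)$, the average entropy of its $\phi$-projections equals $\tfrac{|V_N|}{|V_n|}\Ent(b)$, and use heredity to check that each projection is an admissible competitor for $\ex(V_N,\PP_N)$. Two small refinements of yours worth noting: you carefully justify $\vol(b_{\downarrow\phi}\setminus\PP_N)=0$ via the Fubini-type product argument (the paper asserts it), and you use a $\delta$-near-minimiser rather than implicitly assuming the infimum in Definition~\ref{definition: extremal entropy/relative to possee} is attained.
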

\begin{theorem}\label{theorem: counting for homogeneous ssee}
		Suppose $\V$ is a good homogeneous ssee. Then for every hereditary property $\PP$ of $[0,1]$-decorations of $\V$, either 
		\[\lim_{n\rightarrow \infty}\frac{\ex(V_n, \PP_n)}{\vert V_n\vert} =\pi(\PP) \]
		exists and
		\[\vol(\PP_n)= e^{-\left(\pi(\PP) +o(1)\right)\vert V_n\vert } \]
		or $\ex(V_n, \PP_n)/\vert V_n\vert\rightarrow \infty$ as $n\rightarrow \infty$ and the volume of $\PP_n$ decays superexponentially in $\vert V_n\vert$, $\vol(\PP_n)\leq   e^{-\omega\left(\vert V_n\vert\right)}$.
	\end{theorem}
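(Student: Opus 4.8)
The plan is to deduce the theorem by feeding Proposition~\ref{proposition: existence of limits from uniformity} and the geometric approximation result, Theorem~\ref{theorem: containers for hereditary bodies}, into the general volume estimate of Corollary~\ref{corollary: counting for ssee}, after verifying that homogeneity forces the supersaturation hypothesis (b). Proposition~\ref{proposition: existence of limits from uniformity} already tells us that $\ex(V_n,\PP_n)/\vert V_n\vert$ is non-decreasing, so it either converges to some $\pi(\PP)\in\mathbb{R}_{\geq 0}$ or tends to $+\infty$; these are the two cases in the statement, and we treat them separately.

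\textbf{The convergent case.} Here $\pi(\PP)=\lim_n \ex(V_n,\PP_n)/\vert V_n\vert$ exists, so assumption~(a) of Corollary~\ref{corollary: counting for ssee} holds. The main work is to establish assumption~(b): for every $\varepsilon>0$ there are $\eta>0$ and $N\leq n_0$ such that any box $b\in\Boxes(V_n)$ with $n\geq n_0$ and $\mathbb{E}_\phi\vol(b_{\downarrow\phi}\setminus\PP_N)<\eta$ satisfies $\Ent(b)>(\pi(\PP)-\varepsilon)\vert V_n\vert$. The key point is that homogeneity makes $\Ent$ of a box behave additively under averaging over embeddings. Concretely, fix $N$ large enough that $\ex(V_N,\PP_N)/\vert V_N\vert > \pi(\PP)-\varepsilon/2$ (possible since the sequence is non-decreasing with limit $\pi(\PP)$). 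For a box $b=\prod_{i\in V_n}A_i$, each projection $b_{\downarrow\phi}=\prod_{i\in V_N}A_{\phi(i)}$ is again a box, so $\Ent(b_{\downarrow\phi})=-\sum_{i\in V_N}\log\vert A_{\phi(i)}\vert$; averaging over $\phi$ and using that $\V$ is homogeneous (each $x\in V_n$ lies in the same number of sets $\phi(V_N)$), one gets $\mathbb{E}_\phi\,\Ent(b_{\downarrow\phi}) = \frac{\vert V_N\vert}{\vert V_n\vert}\,\Ent(b)$. Now I would argue that if $\Ent(b)$ were at most $(\pi(\PP)-\varepsilon)\vert V_n\vert$ then on average $b_{\downarrow\phi}$ has entropy at most $(\pi(\PP)-\varepsilon)\vert V_N\vert$, which is strictly below $\ex(V_N,\PP_N)$; by Markov a positive fraction of embeddings $\phi$ have $\Ent(b_{\downarrow\phi})$ below $\ex(V_N,\PP_N)$, and for such $\phi$ the box $b_{\downarrow\phi}$ is too large to lie (up to measure zero) in $\PP_N$, forcing $\vol(b_{\downarrow\phi}\setminus\PP_N)>0$; a compactness/quantitative refinement of this shows $\mathbb{E}_\phi\vol(b_{\downarrow\phi}\setminus\PP_N)$ is bounded below by some $\eta=\eta(\varepsilon,N)>0$, contradicting the hypothesis. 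This is the step I expect to be the main obstacle: turning ``$b_{\downarrow\phi}$ exceeds the extremal volume, hence is not a.e.\ in $\PP_N$'' into a \emph{uniform} lower bound $\eta$ on the expected leftover volume, independent of $b$ and $n$. One clean route is to note that boxes in $V_N$ live in a compact parameter space (after the simple/$k$-rational reductions available via Theorem~\ref{theorem: containers for Forb(finite union of boxes)}), so ``$\Ent(b')\leq \ex(V_N,\PP_N)-\delta$ implies $\vol(b'\setminus\PP_N)\geq \rho(\delta)>0$'' by a straightforward compactness argument on the finite-dimensional body $\PP_N$. Once~(b) is in hand, Corollary~\ref{corollary: counting for ssee} gives $\vol(\PP_n)=e^{-(\pi(\PP)+o(1))\vert V_n\vert}$ directly. (As a sanity check, the matching upper bound $\vol(\PP_n)\leq e^{-\pi(\PP)\vert V_n\vert + o(\vert V_n\vert)}$ alternatively comes for free from covering $\PP_n$ by the container collection $\containers$ of Theorem~\ref{theorem: containers for hereditary bodies} with $\vert\containers\vert\leq e^{\varepsilon\vert V_n\vert}$ and bounding the volume of each container using~(ii) and the same entropy-averaging inequality; the lower bound is immediate since the extremal box sits inside $\PP_n$.)

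\textbf{The divergent case.} Now suppose $\ex(V_n,\PP_n)/\vert V_n\vert\to\infty$. We want $\vol(\PP_n)\leq e^{-\omega(\vert V_n\vert)}$. Fix an arbitrary constant $M>0$; I claim $\vol(\PP_n)\leq e^{-M\vert V_n\vert}$ for all large $n$, which gives the superexponential decay. Pick $N$ with $\ex(V_N,\PP_N)/\vert V_N\vert > 2M$, and apply Theorem~\ref{theorem: containers for hereditary bodies} with a small $\varepsilon=\varepsilon(M,N)>0$ to get, for $n$ large, a collection $\containers$ of simple boxes with $\vol(\PP_n\setminus\bigcup_{c\in\containers}c)=0$, $\vert\containers\vert\leq e^{\varepsilon\vert V_n\vert}$, and $\mathbb{E}_\phi\vol(c_{\downarrow\phi}\setminus\PP_N)<\varepsilon$ for each $c\in\containers$. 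For each container $c$, the condition $\mathbb{E}_\phi\vol(c_{\downarrow\phi}\setminus\PP_N)<\varepsilon$ forces, by the same Markov argument as above, that a large fraction of the projected boxes $c_{\downarrow\phi}$ are (essentially) inside $\PP_N$, hence have volume at most $e^{-\ex(V_N,\PP_N)}\leq e^{-2M\vert V_N\vert}$; feeding this into the homogeneity identity $\mathbb{E}_\phi\,\Ent(c_{\downarrow\phi})=\frac{\vert V_N\vert}{\vert V_n\vert}\Ent(c)$ (together with a crude bound on the few bad $\phi$) yields $\Ent(c)\geq (2M-o(1))\vert V_n\vert$, so $\vol(c)\leq e^{-(2M-o(1))\vert V_n\vert}$. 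Summing over the at most $e^{\varepsilon\vert V_n\vert}$ containers gives $\vol(\PP_n)\leq e^{\varepsilon\vert V_n\vert}\cdot e^{-(2M-o(1))\vert V_n\vert}\leq e^{-M\vert V_n\vert}$ once $\varepsilon<M$ and $n$ is large. As $M$ was arbitrary, $\vol(\PP_n)=e^{-\omega(\vert V_n\vert)}$. The handling of the ``bad'' embeddings $\phi$ (those for which $c_{\downarrow\phi}$ is \emph{not} essentially in $\PP_N$) in the entropy-averaging step is the only delicate point here, but since entropy of a sub-box of $[0,1]^{V_N}$ can be bounded crudely and the bad fraction is controlled by $\varepsilon$ via Markov, this is routine.
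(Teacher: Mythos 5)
Your proof follows essentially the same route as the paper: split into the convergent and divergent cases via Proposition~\ref{proposition: existence of limits from uniformity}, use the additivity of box entropy under projections together with homogeneity to obtain $\mathbb{E}_\phi\,\Ent(b_{\downarrow\phi})=\tfrac{|V_N|}{|V_n|}\Ent(b)$, and combine this with a Markov-type argument to verify the supersaturation hypothesis (feeding into Corollary~\ref{corollary: counting for ssee} in the convergent case, and directly against the containers from Theorem~\ref{theorem: containers for hereditary bodies} in the divergent case). The uniform-$\eta$ supersaturation step you flag as the ``main obstacle'' --- that any box in $\Boxes(V_N)$ with entropy a fixed amount below $\ex(V_N,\PP_N)$ has $\vol(a\setminus\PP_N)$ bounded away from zero --- is precisely where the paper introduces $\delta_1=\delta_1(N,\varepsilon)$, and the paper asserts it with no more justification than you give, so on this point the two arguments coincide.
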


\subsection{Structure of the paper}
We prove our main results in Section~\ref{section: proofs}.  In Section~\ref{section: applications} we give simple applications of our results to Lipschitz functions on the hypercube, the metric polytope, and weighted graphs, while in Section~\ref{section: graph limits} we use our results to derive a multicolour/decorated version of an `entropy of graph limits' theorem of Hatami, Janson and Szegedy~\cite{HatamiJansonSzegedy18} (which was one of our initial motivations for undertaking this project). We end the paper in Section~\ref{section: conclusion} with a discussion of various questions arising from our work.

\section{Proofs of the theoretical results}\label{section: proofs}
For $k\in \N$, let $[k]$ denote the set $\{1,2,\ldots,k\}$. One of the main results of the prequel to this paper~\cite{FalgasRavryOconnellUzzell18} was a container theorem for $[k]$-decorated ssee-s (which itself was obtained as a consequence of a simple container theorem of Saxton and Thomason~\cite{SaxtonThomason16}). To state it we must recall a few definitions.

A $k$-colouring \emph{template} for $V_n$ is a function $t$ assigning to each element of $V_n$ a nonempty subset of $[k]$. A \emph{realisation} of $t$ is a colouring $c: \ V_n \rightarrow [k]$ with the property that $c(x)\in t(x)$ for all $x\in V_n$. The set of all realisations of $t$ is denoted by $\langle t\rangle$. 
Given a collection $\mathcal{F}$ of $k$-colourings of $V_N$, denote by $\Forb_{\V}(\mathcal{F})$ the hereditary property of $k$-colourings of $\V$ not containing an embedding of a colouring in $\mathcal{F}$, i.e.
\begin{align}\label{eq:forbdiscrete}
 \Forb_{\V}(\mathcal{F})_n := \left \{ c \in [k]^{V_n} : \forall \phi \in \binom{V_n}{V_N}, c_{\downarrow \phi} \not \in \mathcal{F} \right \},
 \end{align}
where in the $[k]$-decorated context $c_{\downarrow \phi} $ denotes the $k$-colouring of $V_N$ induced by $\phi$,  $c_{\downarrow \phi}: v\mapsto c(\phi(v))$.
\begin{theorem}[Theorem~3.18 in~\cite{FalgasRavryOconnellUzzell18}]\label{theorem: ssee container}
	Let $\V$ be a good ssee, and let $k$,~$N \in \N$. Let $\mathcal{F}$ be a nonempty subset of $[k]^{V_N}$ and $\PP=\Forb_{\V}(\mathcal{F})$.  For any $\varepsilon>0$, there exists $n_0>0$ such that for any $n\geq n_0$ there exists a collection $\mathcal{T}_n$ of $k$-colouring templates for $V_n$ satisfying:
	\begin{enumerate}[(a)]
		\item $\PP_n \subseteq \bigcup_{t\in \mathcal{T}_n} \langle t \rangle$;
		\item for each template $t\in\mathcal{T}_n$, there are at most $\varepsilon \bigl\vert \binom{V_n}{V_N} \bigr\vert$ pairs $(\phi,c)$ with $\phi \in \binom{V_n}{V_N}$, $c\in \mathcal{F}$ and $c\in \langle t_{\downarrow \phi}\rangle$;
		\item $\vert \mathcal{T}_n\vert\leq  k^{\varepsilon \vert V_n\vert}$.
	\end{enumerate}	
\end{theorem}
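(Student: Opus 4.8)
The plan is to deduce the statement from the hypergraph container theorem of Saxton and Thomason~\cite{SaxtonThomason16}, exactly as the excerpt indicates, by encoding $k$-colourings as transversals of an auxiliary uniform hypergraph. Fix $\varepsilon>0$ and set $r:=|V_N|$; we may assume $r\geq 2$, the case $r=1$ being immediate (one takes $\mathcal{T}_n$ to be the single template forbidding at each vertex the colours appearing in $\mathcal{F}$, or $\mathcal{T}_n=\emptyset$ if this leaves some vertex with no available colour). For $n\geq N$, define an $r$-uniform hypergraph $H=H_n$ on vertex set $V(H):=V_n\times[k]$ whose edges are the sets
\[e_{\phi,c'}:=\bigl\{(\phi(v),c'(v)):v\in V_N\bigr\},\qquad \phi\in\binom{V_n}{V_N},\ c'\in\mathcal{F},\]
viewed as a simple hypergraph (edges that coincide as sets are identified). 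Identifying a colouring $c:V_n\to[k]$ with its graph $S_c:=\{(x,c(x)):x\in V_n\}\subseteq V(H)$, one checks that $e_{\phi,c'}\subseteq S_c$ if and only if $c_{\downarrow\phi}=c'$, so that $c\in\PP_n$ if and only if $S_c$ is an independent set of $H$. More generally, for any $C\subseteq V(H)$ all of whose fibres $C_x:=\{a\in[k]:(x,a)\in C\}$ are nonempty, the template $t_C:x\mapsto C_x$ satisfies $e_{\phi,c'}\subseteq C$ if and only if $c'\in\langle (t_C)_{\downarrow\phi}\rangle$.

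Next I would verify that $H_n$ meets the codegree hypotheses of the container theorem with a spreadness parameter $\tau=\tau(n)$ tending to $0$. Here $|V(H)|=k|V_n|$, and the number of edges is $e(H)\leq |\mathcal{F}|\cdot\bigl|\binom{V_n}{V_N}\bigr|$, with equality up to the $O(\bigl|\binom{V_n}{V_N}\bigr|)$ correction coming from identifying repeated edges; hence by condition~(ii) of a good ssee the average degree $\bar d=r\,e(H)/|V(H)|$ tends to infinity. For $j\geq 2$, two edges $e_{\phi,c'}$, $e_{\psi,c''}$ share at least $j$ vertices only if $|\phi(V_N)\cap\psi(V_N)|\geq j\geq 2$, so the number of ordered pairs of edges meeting in $\geq 2$ vertices is at most $|\mathcal{F}|^2\bigl(I(N,n)+O(\bigl|\binom{V_n}{V_N}\bigr|)\bigr)$; plugging this into the standard estimate for the codegree (``$\delta$-'') function of $H$ — which bounds $\Delta_2$, and with it the higher codegrees, since every $j$-set with $j\geq 3$ contains a $2$-set — and invoking condition~(iii) shows one may take $\tau(n)=o(1)$. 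Applying the simple container theorem of~\cite{SaxtonThomason16} to $H_n$ (iterating the basic container step if necessary to drive the residual edge count down) then yields, for all $n$ large enough in terms of $\varepsilon$, a family $\mathcal{C}=\mathcal{C}_n$ of subsets of $V(H)$ such that every independent set of $H$ lies in some $C\in\mathcal{C}$; every $C\in\mathcal{C}$ satisfies $e(H[C])<\theta\varepsilon\, e(H)$, where $\theta=\theta(N,k)>0$ is the constant fixed below; and $|\mathcal{C}|\leq k^{\varepsilon|V_n|}$, the last bound following from the quantitative estimate in the container theorem together with $|V(H)|=k|V_n|$ and $\tau(n)\to0$.

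Finally I would translate back. Put $\mathcal{T}_n:=\{t_C:C\in\mathcal{C},\ C_x\neq\emptyset\ \forall x\in V_n\}$; each $t_C$ is then a genuine $k$-colouring template and $|\mathcal{T}_n|\leq|\mathcal{C}|\leq k^{\varepsilon|V_n|}$, giving~(c). For~(a): if $c\in\PP_n$ then $S_c$ is independent, so $S_c\subseteq C$ for some $C\in\mathcal{C}$; then $(x,c(x))\in C$ for every $x$, so all fibres of $C$ are nonempty and $c(x)\in C_x=t_C(x)$, i.e. $c\in\langle t_C\rangle$ with $t_C\in\mathcal{T}_n$. For~(b): by the equivalence recorded in the first paragraph, the pairs $(\phi,c')$ with $\phi\in\binom{V_n}{V_N}$, $c'\in\mathcal{F}$ and $c'\in\langle (t_C)_{\downarrow\phi}\rangle$ are precisely those with $e_{\phi,c'}\subseteq C$; since each edge of $H$ inside $C$ arises from at most $r!$ such pairs, and $e(H)\leq|\mathcal{F}|\bigl|\binom{V_n}{V_N}\bigr|$, the number of such pairs is at most $r!\,e(H[C])<r!\,\theta\varepsilon\, e(H)\leq r!\,\theta\,|\mathcal{F}|\,\varepsilon\bigl|\binom{V_n}{V_N}\bigr|$, which is $\leq\varepsilon\bigl|\binom{V_n}{V_N}\bigr|$ once we set $\theta:=(r!\,|\mathcal{F}|)^{-1}$. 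The main obstacle is the codegree estimate in the middle paragraph: matching conditions~(ii) and~(iii) of a good ssee to the precise spreadness hypothesis of the container theorem and confirming, in particular, that the higher-order codegrees of $H_n$ are harmless — this is the only place the ssee axioms enter, the rest being bookkeeping.
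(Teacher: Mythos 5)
Your proposal is essentially the proof this statement actually has: the theorem is quoted from the prequel~\cite{FalgasRavryOconnellUzzell18}, whose argument is exactly this reduction --- encode colourings as subsets of $V_n\times[k]$, form the $\vert V_N\vert$-uniform auxiliary hypergraph of embedded forbidden patterns, verify the hypotheses of the simple container theorem of~\cite{SaxtonThomason16} via conditions (ii) and (iii) of a good ssee, and read the containers back as $k$-colouring templates. Your bookkeeping for (a)--(c), including the $r!$ factor per edge and the handling of containers with an empty fibre, matches that argument, so the approach is the same and correct.
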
 

Our strategy to prove the theorems in Section~\ref{subsection: container theorems} is to use compactness: as $[0,1]$ is bounded, we can approximate measurable sets by finite unions of intervals of the form $[\frac{i-1}{k}, \frac{i}{k}]$ where $k\in \N$ is some large constant and $i\in [k]$. There we can apply the container theorem for colourings by discrete finite sets, Theorem~\ref{theorem: ssee container}. Provided we are careful with our approximations (and, crucially, that we are using the right definitions), we are able to transfer the container results from the discrete to the continuous setting.

\begin{proof}[Proof of Theorem~\ref{theorem: containers for Forb(finite union of boxes)}]
	Fix $\varepsilon>0$. There exists $k\in \N$ such that there exists a finite union of $k$-rational simple boxes $\tilde{b}$ such that (1) $\tilde{b} \subseteq b$ and (2) $\vol(b\setminus \tilde{b})< \varepsilon/2$. We can now pass to the discrete setting and consider $[k]$-colourings of $V_N$ as a proxy for $k$-rational simple boxes in $[0,1]^{V_N}$. 
	Let
	\[\mathcal{F}:=\left\{c\in [k]^{V_N}:  \  \prod_{v\in V_N}\biggl[\frac{c_v -1}{k}, \frac{c_v}{k}\biggr] \subseteq \tilde{b}\right\}\]
	be the family of $k$-colourings of $V_N$ corresponding to $\tilde{b}$. Apply Theorem~\ref{theorem: ssee container} to $\mathcal{F}$ with parameters $k, N$ and $\varepsilon'= \min \left(\varepsilon/\log k, \varepsilon/2\right)$. Let ${n}_0\in \N$ be such that for all $n\geq  n_0$ conclusions (a)--(c) from Theorem~\ref{theorem: ssee container} hold. Let $\mathcal{T}_n$ be the family of templates whose existence is guaranteed by the theorem. For each $t\in \mathcal{T}_n$, we define a box $c^t$ given by
	\[c^t:= \prod_{v\in V_n} \left(\bigcup_{i\in t(v)}\biggl[\frac{i -1}{k}, \frac{i}{k}\biggr]\right).\]
	Let $\containers$ denote the collection of boxes thus obtained. Property (a) from Theorem~\ref{theorem: ssee container} implies
	\begin{align}\label{eq: C satisfies property i}
	\PP_n =  \left([0,1]^{V_n}\setminus \left(\partial^+_{V_n}(b)\right)\right) \subseteq \left([0,1]^{V_n}\setminus \left(\partial^+_{V_n}(\tilde{b})\right)\right)\subseteq \bigcup_{t\in \mathcal{T}_n} c^t= \bigcup_{c\in \containers}c.
	\end{align}
	Note that the second containment relation in~\eqref{eq: C satisfies property i} follows since by definition, the (continuous) upper shadow $\partial^+_{V_n}(\tilde{b})$ when $\tilde{b}$ is a union of $k$-rational simple boxes corresponds precisely to the (discrete set of) $c \in [k]^{V_n}$ which are not in $\Forb_{\V}(\mathcal{F})_n$ (see~(\ref{eq:forbdiscrete})). This is what motivated our choice of $\mathcal{F}$ above.

	Further, property (b) entails that if one fixes $t\in \mathcal{T}_n$ and picks $\phi\in \binom{V_n}{V_M}$ uniformly at random, there is at most an $\varepsilon'$-chance that $\left(c^t\right)_{\downarrow \phi}$ intersects the box ${\tilde{b}}$ in a set with non-zero measure. Thus
	\begin{align}
	\mathbb{E}_{\phi} \vol\bigl((c^t)_{\downarrow \phi}\cap b\bigr)&= \mathbb{E}_{\phi} \vol\left((c^t)_{\downarrow \phi}\cap(b\setminus \tilde{b})\right)+ \mathbb{E}_{\phi} \vol\left((c^t)_{\downarrow \phi}   \cap {\tilde{b}}\right)\leq \vol(b\setminus \tilde{b}) + \varepsilon'<\varepsilon.\label{eq: C satisfies property ii}
	\end{align}
	Finally, property (c) gives
	\begin{align}\label{eq: C satisfies property iii}
	\vert \containers\vert = \vert \mathcal{T}_n\vert \leq k^{\varepsilon' \vert V_n\vert}\leq e^{\varepsilon \vert V_n\vert}.\end{align}
	Together, (\ref{eq: C satisfies property i}), (\ref{eq: C satisfies property ii}) and (\ref{eq: C satisfies property iii}) show properties (i)---(iii) in the statement of Theorem~\ref{theorem: containers for Forb(finite union of boxes)} are satisfied as claimed, concluding the proof.
\end{proof}

\begin{proof}[Proof of Theorem~\ref{theorem: containers for hereditary bodies}]
	Fix $\varepsilon>0$.  Let $b:=[0,1]^{V_N}\setminus \PP_N$. Since $\PP$ is a measurable set, there exists a finite union of simple boxes~$\tilde{b}$ such that (1) $\vol(\tilde{b}\setminus b)=0$ (i.e.~up to a zero-measure set, $\tilde{b}\subseteq b$) and (2) $\vol(b\setminus \tilde{b})< \varepsilon/2$, by the definition of the Lebesgue measure.  Apply Theorem~\ref{theorem: containers for Forb(finite union of boxes)} to $\mathcal{Q}:=\Forb(\tilde{b})$ with parameter $\varepsilon/2$, and let $\containers$ be the resulting family of simple boxes.

As $\PP$ is hereditary we have $\partial^+_{V_n}(b) \subseteq [0,1]^{V_n} \setminus \PP_n$, which implies that
\[ \vol (\PP_n \setminus \mathcal{Q}_n) = \vol (\PP_n \cap \partial^+_{V_n}(\tilde{b})) \leq \vol(\PP_n \cap \partial^+_{V_n}(b)) + \vol( \partial^+_{V_n}(\tilde{b}) \setminus \partial^+_{V_n}(b))=0. \]
It follows that
\[ \vol\left(\PP_n\setminus \bigcup_{c\in \containers}c\right) \leq \vol\left(\PP_n\setminus \mathcal{Q}_n\right)+ \vol\left(\mathcal{Q}_n\setminus \bigcup_{c\in \containers}c\right) =0.\]
Further, we have 
\[ \vol\left(\mathcal{Q}_N \setminus \PP_N \right)= \vol\left([0,1]^{V_N} \setminus (\partial^+_{V_N}(\tilde{b}) \cup \PP_N) \right)=\vol\left(b \setminus  \tilde{b}\right)<\frac{\varepsilon}{2},\]
and so for every $c \in \mathcal{C}$ we have 
\[ \mathbb{E}_{\phi} \vol\left(c_{\downarrow \phi} \setminus  \PP_N\right) \leq \mathbb{E}_{\phi} \bigl(\vol\left(c_{\downarrow \phi}\setminus \mathcal{Q}_N\right)+ \vol(\mathcal{Q}_N\setminus \PP_N) \Bigr)< \frac{\varepsilon}{2}+\frac{\varepsilon}{2}= \varepsilon. \]

	Finally, $\vert \containers_n\vert \leq e^{\varepsilon \vert V_n\vert /2}$. Thus $\containers$ satisfies the properties (i)--(iii) claimed by Theorem~\ref{theorem: containers for hereditary bodies}, as desired.
\end{proof}
\begin{proof}[Proof of Corollary~\ref{corollary: counting for ssee}]
	Fix $\varepsilon>0$. Let $\eta>0$ and $N, n_0\in \N$ be the constants guaranteed by  assumption (b). Applying Theorem~\ref{theorem: containers for hereditary bodies} to $\PP$ with parameter $\delta= \min(\varepsilon, \eta)$, we find $n_1\geq n_0$ such that for all $n\geq n_1$ there exists a collection $\containers$ of simple boxes in $[0,1]^{V_n}$ such that (i) up to a zero-volume set, $\PP_n$ is contained inside $\bigcup_{c\in \containers} c$, (ii) for every $c\in \containers$,
	$\mathbb{E}_{\phi} \vol\left(c_{\downarrow \phi}\setminus \PP_N\right)<\delta$
	and (iii) $\vert \containers\vert \leq e^{\delta \vert V_n\vert}$.

	By assumption (b) and our choice of $\delta$ and $n_1$, this implies that for every $c\in \containers$ we have $\Ent(c)\geq \left(\pi(\PP) - \varepsilon\right) \vert V_n \vert$. Now (i) and (iii) allow us to bound the volume of $\PP_n$:
	\[
	\vol\left(\PP_n\right) \leq \vol \left( \bigcup_{c \in \mathcal{C}} c \right)= \sum_{c\in \containers} e^{-\Ent(c)} \leq \vert \containers\vert e^{-\left( \pi(\PP) - \varepsilon\right) \vert V_n\vert}\leq e^{\delta\vert V_n\vert } e^{-\left( \pi(\PP) - \varepsilon\right)\vert V_n\vert } \leq e^{-\left(\pi(\PP) - 2\varepsilon\right)\vert V_n\vert }.
	\]
	Since $\varepsilon>0$ was arbitrary, the theorem follows. 	
\end{proof}
\begin{proof}[Proof of Proposition~\ref{proposition: existence of limits from uniformity}]
	Assume that $\V$ is a good, homogeneous ssee, and let $\PP$ be a hereditary property of $[0,1]$-decorations of $\V$. Set $x_n= \frac{\ex(V_n, \PP_n)}{\vert V_n\vert}$. Let $b\in \Boxes(V_{n+1})$ be a box with $\vol\left(b\setminus \PP_{n+1}\right)=0$ and $\Ent(b)= \ex(V_{n+1}, \PP_{n+1})$.  
By homogeneity of the ssee $\V$,  each coordinate in $V_{n+1}$ is counted in the same nonzero number $k= \left| \binom{V_{n+1}}{V_{n}} \right| \frac{\vert V_n\vert }{\vert V_{n+1}\vert }$ of projections $\phi(V_n)$, so that the family $\{\phi(V_n): \ \phi \in \binom{V_n+1}{V_n}\}$ constitutes a $k$-uniform cover of $V_{n+1}$. Since $b$ is a box, it follows that
\begin{align}\label{eq: bounding volume by projections}
\vol(b)^{\left| \binom{V_{n+1}}{V_n} \right| \frac{\vert V_n\vert}{\vert V_{n+1}\vert}} 
= \prod_{\phi \in \binom{V_{n+1}}{V_n}} \vol({b}_{\downarrow \phi}).
\end{align}
Since $\PP$ is hereditary and $\vol(b\setminus \PP_{n+1})=0$, for every $\phi \in \binom{V_{n+1}}{V_n}$ the $\phi$-projection ${b}_{\downarrow \phi}$ is a box in $V_n$ satisfying $\vol(b_{\downarrow \phi}\setminus \PP_n)=0$. In particular, we must have
\begin{align*}
\vol({b}_{\downarrow \phi})\leq e^{-\ex(V_n, \PP)}= e^{-x_n \vert V_n\vert}.
\end{align*}
Combining this with (\ref{eq: bounding volume by projections}), we have
\begin{align*}
e^{-x_{n+1} \left| \binom{V_{n+1}}{V_n} \right| \vert V_n\vert }=\vol(b)^{\left| \binom{V_{n+1}}{V_n} \right| \frac{\vert V_n\vert}{\vert V_{n+1}\vert}}\leq e^{-x_n \left| \binom{V_{n+1}}{V_n} \right| \vert V_n\vert},
\end{align*}
implying $x_n\leq x_{n+1}$ as desired.	
\end{proof}
\begin{proof}[Proof of Theorem~\ref{theorem: counting for homogeneous ssee}]
	Assume that $\V$ is a good, homogeneous ssee, and let $\PP$ be a hereditary property of $[0,1]$-decorations of $\V$. Set $x_n= \frac{\ex(V_n, \PP_n)}{\vert V_n\vert}$.

	Suppose first of all that $x_n\rightarrow \pi(\PP)$ as $n\rightarrow\infty$. Thus assumption (a) from Corollary~\ref{corollary: counting for ssee} is satisfied. We show that assumption (b) is satisfied as well, whence our claimed volume estimate for $\PP_n$ is immediate. Fix $\varepsilon>0$. We may assume that $\pi(\PP)> \varepsilon$, else we have nothing to show. Now, by the monotonicity of $(x_n)_{n\in \mathbb{N}}$ established in Proposition~\ref{proposition: existence of limits from uniformity}, there exists a constant $N\in \N$ such that $x_{N}> \pi(\PP)-\frac{\varepsilon}{3}$. In particular there exists $\delta_1=\delta_1(N, \varepsilon)>0$ such that if $a\in \Boxes(V_N)$ satisfies $\Ent(a)\leq (\pi(\PP)-\frac{2\varepsilon}{3})|V_N|$, then we have $\vol\left(a\setminus \PP_N\right)>\delta_1$.

	Consider a box $b\in \Boxes(V_n)$ with $\Ent(b)\leq (\pi(\PP)-\varepsilon)|V_n|$ for some $n\geq N$.  Let $B$ be the family of $\phi\in \binom{V_n}{V_N}$ such that $\Ent(b_{\downarrow \phi})\leq (\pi(\PP)-\frac{2\varepsilon}{3})|V_N|$. By our observation in the previous paragraph, $\phi\in B$ implies $\vol(b_{\downarrow \phi}\setminus \mathcal{P}_N)>\delta_1$.  Now, by homogeneity of $\V$ and the fact the volume of a box is the product of its projections, we have
	\begin{align*}
	\left| \binom{V_n}{V_N} \right| \vert V_{N}\vert  \left(\pi(\PP)-\varepsilon\right)\geq \left| \binom{V_n}{V_N} \right| \frac{\vert V_{N}\vert}{\vert V_n \vert}\Ent(b)  &=\sum_{\phi \in \binom{V_n}{V_N}} \Ent(b_{\downarrow\phi}) \geq  \left\vert \binom{V_n}{V_N}\setminus B\right\vert \vert V_N \vert \biggl(\pi(\PP)-\frac{2\varepsilon}{3}\biggr),
	\end{align*}
	implying the existence of a constant $\delta_2=\delta_2(\varepsilon, \PP)>0$ such that $\vert B \vert \geq \delta_2 \left| \binom{V_{n}}{V_N} \right|$. (Explicitly, $\delta_2= \frac{\varepsilon}{3\pi(\PP)-2\varepsilon}$ will do.) It follows that
	\begin{align*}
	\mathbb{E}_{\phi} \vol(b_{\downarrow \phi} \setminus \PP_N)\geq \delta_1 \mathbb{P}\left(\phi \in B\right) \geq \delta_1\delta_2.
	\end{align*}
	Setting $\eta=\delta_1\delta_2$, we have that assumption (b) from Corollary~\ref{corollary: counting for ssee} is satisfied, and we are done in this case.

	Suppose now instead that $x_n\rightarrow \infty$ as $n\rightarrow\infty$. For every $C>0$, there exists a constant $N=N(C, \PP)\in \N$ such that $x_{N}>2C+2$. In particular there exists $\delta_1=\delta_1(C)>0$ such that if $a\in \Boxes(V_{N})$ satisfies $\Ent(a)\leq (2C+1)|V_{N}|$, then we have $\vol\left(a\setminus \PP_{N}\right)>\delta_1$.

	Consider a box $b\in \Boxes(V_n)$ with $\Ent(b)\leq 2C |V_n|$ for some $n\geq N$.  Let $B$ be the family of $\phi\in \binom{V_n}{V_N}$ such that $\Ent(b_{\downarrow \phi})\leq (2C+1)|V_N|$.  By homogeneity of $\V$ and the fact the volume of a box is the product of its projections, we have
	\begin{align*}
	\left| \binom{V_n}{V_N} \right| \vert V_{N}\vert  2C\geq \left| \binom{V_n}{V_N}\right| \frac{\vert V_{N}\vert}{\vert V_n \vert}\Ent(b)  &=\sum_{\phi \in \binom{V_n}{V_N}} \Ent(b_{\downarrow\phi}) \geq  \left\vert \binom{V_n}{V_N}\setminus B\right\vert \vert V_N \vert \biggl(2C+1\biggr),
	\end{align*}
	implying the existence of a constant $\delta_2=\delta_2(C)>0$ such that $\vert B \vert \geq \delta_2 \left| \binom{V_{n}}{V_N} \right|$. (Explicitly, $\delta_2= \frac{1}{2C+1}$ will do.) Now we have (by our observation in the paragraph above)
	\begin{align*}
	\mathbb{E}_{\phi} \vol(b_{\downarrow \phi} \setminus \PP_N)\geq \delta_1 \mathbb{P}\left(\phi \in B\right) \geq \delta_1\delta_2.
	\end{align*}
Thus we have shown the following:  for all $n\ge N$, and all boxes $b\in \Boxes(V_n)$, 	$\mathbb{E}_{\phi} \vol(b_{\downarrow \phi} \setminus \PP_N)<\delta_1\delta_2$ implies $\mathrm{Ent}(b)> 2C |V_n|$ $(\dagger)$.

	We now apply Theorem~\ref{theorem: containers for hereditary bodies} to the good ssee $\V$ and the hereditary property $\mathcal{P}$ with parameters $N\in \mathbb{N}$ and $\varepsilon=\varepsilon(C)=\min \left(C, \delta_1\delta_2\right)>0$: there exists $n_0=n_0(C)>N$ such that for all $n\geq n_0$ there exists a collection $\mathcal{C}\subseteq \Boxes(V_n)$ satisfying properties (i)---(iii) from the statement of Theorem~\ref{theorem: containers for hereditary bodies}. By $(\dagger)$ established above and our choice of $\varepsilon>0$, property (ii) implies that for every $c\in \mathcal{C}$,  $\mathrm{Ent}(c)>2C |V_n|$. Then properties (i), (iii)  and our choice of $\varepsilon\leq C$ together yield that for all $n\geq n_0(C)$,
	\[ \vol(\PP_n) \leq \vol \left( \bigcup_{c \in \mathcal{C}} c \right)  \leq \sum_{c \in \mathcal{C}} e^{-\Ent(c)} \leq |\mathcal{C}| e^{-2C|V_n|} \leq e^{(\varepsilon-2C)\vert V_n\vert} \leq e^{-C \vert V_n \vert}. \]
	Since $C>0$ was arbitrary, it follows that $\vol(\PP_n)= e^{-\omega\left(\vert V_n\vert\right)}$, as claimed.
\end{proof}

\section{Applications}\label{section: applications}
\subsection{Functions from hypercubes into $[0,1]$}
Let $Q_n=\{0,1\}^n$ denote the $n$-dimensional hypercube. Consider the sequence of sets $\left(Q_n\right)_{n \in \mathbb{N}}$ together with for every $N\leq n$ the collection of injections $\phi: \ Q_N\rightarrow Q_n$ obtained by choosing an arbitrary element $\mathbf{u}\in Q_n$ and an arbitrary set $S=\{s_1, s_2, \ldots s_N\}$
of integers with $1\leq s_1<s_2<\ldots  < s_N\leq n$ and letting
\[\phi(\mathbf{v})_i =\left\{ \begin{array}{ll} 
v_j& \textrm{if } i=s_j\\
u_i & \textrm{otherwise}.
\end{array}\right. \]
It is an easy exercise to see that this constitutes a good, homogeneous ssee, which we denote by $\mathbf{Q}$. 
\begin{remark}
	In fact, we still get a good, homogeneous ssee if we consider any of the other natural collection of embeddings $\phi$ on hypercubes, such as for example all the embeddings $\phi$ obtained by selecting $\mathbf{u}\in Q_n$ and an injection $\psi:\ [N]\rightarrow [n]$ and letting
	\[\phi(\mathbf{v})_i =\left\{ \begin{array}{ll} 
	v_j+u_i \ (\mathrm{mod }  \ 2)& \textrm{if } i=\psi(j)\\
	u_i & \textrm{otherwise};
	\end{array}\right. \]
	but we do not pursue this here.
\end{remark}
Let $c\in \mathbb{R}_{>0}$. Recall that a function $f: \ Q_n \rightarrow [0,1]$ is \emph{$c$-Lipschitz} if changing a coordinate of $\mathbf{u}\in \{0,1\}^n$ changes the values of $f(\mathbf{u})$ by at most $c$.  Our aim in this subsection is to show that the probability a random function $f: \ Q_n\rightarrow [0,1]$ is $c$-Lipschitz is $c^{2^{n+o(1)}}$:
\begin{theorem}\label{thm: probability lipschitz}
	Let $c\in (0,1)$. Let $f: \ Q_n \rightarrow [0,1]$ be a random function	chosen according to the uniform measure on $[0,1]^{Q_n}$. Then
	\[\mathbb{P}\left(f \textrm{ is $c$-Lipschitz}\right)= c^{2^{n}+o(2^n)}.\]
\end{theorem}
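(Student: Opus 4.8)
The strategy is to apply the general volume estimate for homogeneous ssee-s, Theorem~\ref{theorem: counting for homogeneous ssee}, to the good homogeneous ssee $\mathbf{Q}$ and the hereditary property $\PP^{(c)}$ whose $n$-th level $\PP^{(c)}_n$ consists of all $c$-Lipschitz functions $f:\ Q_n\to[0,1]$. The first thing to check is that $\PP^{(c)}$ really is hereditary in the sense of Definition~\ref{definition: [0,1]-decorated properties}: if $f:\ Q_N\to[0,1]$ fails to be $c$-Lipschitz, i.e.\ there are adjacent $\mathbf{u},\mathbf{u}'\in Q_N$ with $|f(\mathbf{u})-f(\mathbf{u}')|>c$, then for any embedding $\phi\in\binom{Q_n}{Q_N}$ the images $\phi(\mathbf{u}),\phi(\mathbf{u}')$ are adjacent in $Q_n$, so any lift of $f$ is non-$c$-Lipschitz; hence the complement is closed under upper shadows. (Working with the minimal notion of $c$-Lipschitz — only comparing genuinely adjacent vertices — and the standard coordinate-embedding model for $\mathbf{Q}$ makes this immediate.) Given heredity, Theorem~\ref{theorem: counting for homogeneous ssee} tells us that $\vol(\PP^{(c)}_n)=e^{-(\pi(\PP^{(c)})+o(1))2^n}$ provided $x_n:=\ex(Q_n,\PP^{(c)}_n)/2^n$ converges, and in any case $\mathbb{P}(f\text{ is }c\text{-Lipschitz})=\vol(\PP^{(c)}_n)$ since $f$ is uniform on $[0,1]^{Q_n}$.

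It therefore remains to pin down $\pi(\PP^{(c)})$; the claim $\mathbb{P}=c^{2^n+o(2^n)}$ is exactly the assertion that $\pi(\PP^{(c)})=\log(1/c)=-\log c$. The lower bound $\pi(\PP^{(c)})\ge -\log c$ (equivalently, the extremal box has volume $\le c^{2^n}$) is the interesting direction and I expect it to be the main obstacle. Here is the plan: let $b=\prod_{\mathbf{u}\in Q_n}A_{\mathbf{u}}$ be any box with $\vol(b\setminus\PP^{(c)}_n)=0$, i.e.\ almost every point of $b$ is $c$-Lipschitz. One shows that for \emph{every} pair of adjacent vertices $\mathbf{u}\sim\mathbf{u}'$, one has $\sup A_{\mathbf{u}}-\inf A_{\mathbf{u}'}\le c$ up to measure-zero adjustments — otherwise a positive-measure subset of $b$ would violate the Lipschitz condition on that edge. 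From this one extracts that along any monotone path (or, more efficiently, using the interval structure) the sets $A_{\mathbf{u}}$ cannot all be large: summing/telescoping the constraints over a spanning structure of $Q_n$ gives $\prod_{\mathbf{u}}|A_{\mathbf{u}}|\le c^{2^n}$ up to a lower-order correction. Concretely, a clean way is: the constraints force $|A_{\mathbf{u}}|\le c$ for all but $O(1)$ (really at most one) of the vertices once $n$ is large — because if two vertices at distance $k$ both had $|A_{\mathbf{u}}|,|A_{\mathbf{u}'}|$ bounded below, the edge constraints along a geodesic would be violated for $k$ large — hence $\vol(b)\le c^{2^n-o(2^n)}$, giving $x_n\ge -\log c-o(1)$.

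For the matching upper bound $\pi(\PP^{(c)})\le -\log c$, exhibit the constant box $b=\prod_{\mathbf{u}\in Q_n}[\tfrac{1-c}{2},\tfrac{1+c}{2}]$: every point of this box is a function taking values in an interval of length $c$, hence is $c$-Lipschitz, so $\vol(b\setminus\PP^{(c)}_n)=0$ and $\ex(Q_n,\PP^{(c)}_n)\le 2^n\log(1/c)$, i.e.\ $x_n\le -\log c$. Combined with the lower bound, $x_n\to-\log c=\pi(\PP^{(c)})$, the sequence converges (consistent with Proposition~\ref{proposition: existence of limits from uniformity}), we are in the first case of Theorem~\ref{theorem: counting for homogeneous ssee}, and $\vol(\PP^{(c)}_n)=e^{-(-\log c+o(1))2^n}=c^{2^n+o(2^n)}$, which is the statement since the probability equals this volume. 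The one subtlety to handle carefully is the ``up to measure zero'' bookkeeping in the lower-bound argument — replacing each $A_{\mathbf{u}}$ by the smallest interval (or essential supremum/infimum) containing it up to a null set, and verifying the edge constraints transfer to these cleaned-up sets — but this is routine once set up correctly.
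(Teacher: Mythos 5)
Your overall strategy is the same as the paper's: verify that $c$-Lipschitz is a hereditary property of $[0,1]$-decorations of the good homogeneous ssee $\mathbf{Q}$, compute the extremal entropy $\ex(Q_n,\PP_n)$, and then invoke Theorem~\ref{theorem: counting for homogeneous ssee} to convert the extremal entropy into a volume estimate (which equals the probability). Your upper bound is also correct and equivalent to the paper's (a constant box over an interval of length $c$). The problem is in the lower bound, where the concrete argument you commit to is false.

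You claim that ``the constraints force $|A_{\mathbf{u}}|\le c$ for all but $O(1)$ (really at most one) of the vertices once $n$ is large,'' justified by the assertion that two far-apart vertices with $|A_{\mathbf{u}}|,|A_{\mathbf{u}'}|>c$ would violate the edge constraints along a geodesic. This is not true, and the set $\mathcal{X}:=\{\mathbf{u}:\ |A_{\mathbf{u}}|>c\}$ can be as large as $2^{n-1}$. For instance, take small $\delta>0$ and set $A_{\mathbf{u}}=[0,c+\delta]$ when $\mathbf{u}$ has even parity and $A_{\mathbf{u}}=[\delta,c]$ when $\mathbf{u}$ has odd parity; then across every edge $\{\mathbf{u},\mathbf{u}'\}$ one checks $\sup A_{\mathbf{u}}-\inf A_{\mathbf{u}'}\le c$ and $\sup A_{\mathbf{u}'}-\inf A_{\mathbf{u}}\le c$, so $\vol(b\setminus\PP_n)=0$, yet half the vertices lie in $\mathcal{X}$. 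So the ``geodesic'' argument has no content: the edge constraint propagates only one step and allows indefinite alternation. What \emph{is} true, and is the actual content of your edge observation, is that for every edge $\{\mathbf{u},\mathbf{u}'\}$ one gets (after replacing $A_{\mathbf{u}}$ by its essential interval hull) $|A_{\mathbf{u}}|+|A_{\mathbf{u}'}|\le 2c$, hence $|A_{\mathbf{u}}||A_{\mathbf{u}'}|\le c^2$ by AM--GM. Multiplying this over the edges of a perfect matching of $Q_n$ (the paper uses the pairs $\{\mathbf{v}\times\{0\},\mathbf{v}\times\{1\}\}$, $\mathbf{v}\in Q_{n-1}$) gives $\vol(b)\le c^{2^n}$ exactly — this is what your ``summing over a spanning structure'' remark should have resolved into. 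So your gap is not in the plan but in the one place where you say ``concretely, a clean way is''; replacing the false $O(1)$ claim with the matching-plus-AM--GM step recovers the paper's proof.
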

We prove Theorem~\ref{thm: probability lipschitz} via a simple extremal entropy result, Theorem~\ref{theorem: lipschitz} below, from which Theorem~\ref{thm: probability lipschitz} can be easily deduced via Theorem~\ref{theorem: counting for homogeneous ssee}. Fix $c\in (0,1)$. Clearly, the collection of functions $Q_n\rightarrow[0,1]$ can be identified with the set of $[0,1]$-decorations of $Q_n$. Consider the hereditary property $\mathcal{P}$ of $[0,1]$-decorations of the (good, homogeneous) ssee $\mathbf{Q}$ corresponding to being $c$-Lipschitz.
\begin{theorem}\label{theorem: lipschitz}
	$\ex(Q_n, \mathcal{P}_n)=-\vert Q_n\vert \log c$.  
\end{theorem}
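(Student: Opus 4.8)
The plan is to establish the two inequalities $\ex(Q_n,\mathcal{P}_n)\le -\vert Q_n\vert\log c$ and $\ex(Q_n,\mathcal{P}_n)\ge -\vert Q_n\vert\log c$ separately. For the upper bound I exhibit a single large box: let $b=\prod_{\mathbf{u}\in Q_n}[0,c]$. Every decoration $f\in b$ takes all its values in $[0,c]$, so any two of its values differ by at most $c$; in particular $f$ is $c$-Lipschitz, whence $\vol(b\setminus\mathcal{P}_n)=0$. Since $\vol(b)=c^{\vert Q_n\vert}$, this gives $\ex(Q_n,\mathcal{P}_n)\le\Ent(b)=-\vert Q_n\vert\log c$.

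For the matching lower bound, let $b=\prod_{\mathbf{u}\in Q_n}A_{\mathbf{u}}$ be an arbitrary box with $\vol(b\setminus\mathcal{P}_n)=0$; we may assume $\vol(b)>0$ (else $\Ent(b)=\infty$ and there is nothing to prove), so every $A_{\mathbf{u}}$ has positive measure. Fix an edge $\mathbf{u}\mathbf{v}$ of $Q_n$. The set of $f\in b$ with $\vert f(\mathbf{u})-f(\mathbf{v})\vert>c$ lies inside $b\setminus\mathcal{P}_n$ and is therefore null; integrating out the (positive-measure) coordinates outside $\{\mathbf{u},\mathbf{v}\}$ via Fubini forces $\vert x-y\vert\le c$ for almost every $(x,y)\in A_{\mathbf{u}}\times A_{\mathbf{v}}$. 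I then apply the following one-dimensional fact: if $A,B\subseteq[0,1]$ have positive measure and $\vert x-y\vert\le c$ for a.e.\ $(x,y)\in A\times B$, then $\vert A\vert+\vert B\vert\le 2c$, and hence $\vert A\vert\cdot\vert B\vert\le c^2$ by the AM--GM inequality. Applying this across the $\vert Q_n\vert/2$ edges of the perfect matching $M$ of $Q_n$ that pairs each vertex with the vertex obtained by flipping its first coordinate --- these edges cover every vertex exactly once --- yields
\[\vol(b)=\prod_{\mathbf{u}\mathbf{v}\in M}\vert A_{\mathbf{u}}\vert\cdot\vert A_{\mathbf{v}}\vert\le \bigl(c^2\bigr)^{\vert Q_n\vert/2}=c^{\vert Q_n\vert},\]
so $\Ent(b)\ge -\vert Q_n\vert\log c$. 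Taking the infimum over all such $b$ finishes the lower bound, and together with the first paragraph this proves the theorem.

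The remaining ingredient is the one-dimensional fact, which I regard as the only genuine content here (the verification is short and elementary). Write $m_A=\operatorname{essinf}A$, $M_A=\operatorname{esssup}A$, and similarly $m_B,M_B$ for $B$. Choosing points of $A$ arbitrarily close to $M_A$ and points of $B$ arbitrarily close to $m_B$ on positive-measure sets, and using that a product of two positive-measure sets cannot be entirely exceptional, one gets $M_A-m_B\le c$ whenever $M_A\ge m_B$ (and trivially otherwise); symmetrically $M_B-m_A\le c$. Since $A\subseteq[m_A,M_A]$ and $B\subseteq[m_B,M_B]$ up to null sets, we conclude
\[\vert A\vert+\vert B\vert\le (M_A-m_A)+(M_B-m_B)=(M_A-m_B)+(M_B-m_A)\le 2c,\]
as needed.
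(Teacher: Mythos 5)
Your proof is correct and follows essentially the same strategy as the paper's: both use $[0,c]^{Q_n}$ for the upper bound, and for the lower bound both pair off the vertices of $Q_n$ via a coordinate-flip perfect matching and show $\vert A_{\mathbf{u}}\vert\cdot\vert A_{\mathbf{v}}\vert\le c^2$ for each matched pair. Your packaging of the one-dimensional step as the clean, symmetric inequality $\vert A\vert+\vert B\vert\le 2c$ via essential sup/inf (followed by AM--GM) is a slightly tidier presentation of the same argument that the paper carries out by case analysis on whether $\vert A_{\mathbf{x}}\vert>c$, and your approach of bounding an arbitrary admissible box rather than an extremal one sidesteps the (unaddressed) question of whether the infimum in the definition of $\ex$ is attained.
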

\begin{proof}
	For the upper bound, observe that the box $b=[0,c]^{Q_n}$ lies wholly inside $\mathcal{P}_n$ and has entropy exactly $-\vert Q_n\vert\log (c)$.

	For the lower bound, fix $\varepsilon>0$ with $c+\varepsilon<1$. Let $b\in \Boxes(Q_n)$ be such that $\mathrm{Ent}(b)= \ex(Q_n, \mathcal{P}_n)$ and $\vol\left(b\setminus \mathcal{P}_n\right)=0$. Then $b= \prod_{\mathbf{x}\in Q_n}A_{\mathbf{x}}$, where the $A_{\mathbf{x}}$ are measurable subsets of $[0,1]$. Set $\mathcal{X}=\{\mathbf{x}: \ \vert A_{\mathbf{x}}\vert> c\}$. Consider $\mathbf{x} \in \mathcal{X}$, and assume $\vert A_{\mathbf{x}}\vert=\ell$. Then for any $\eta>0$, there exists an interval $I=[u+\eta, u+\ell-\eta]\subseteq [0,1]$ such that both $A_{\mathbf{x}}\setminus [0,u+\ell-\eta]$ and $A_{\mathbf{x}}\setminus [u+\eta, 1]$ have strictly positive measure. 
Let $\mathbf{y}\in Q_n$ be obtained by modifying exactly one coordinate of $\mathbf{x}$. Since $\vol\left(b\setminus \mathcal{P}_n\right)=0$ and $A_{\mathbf{x}}\setminus [0,u+\ell-\eta]$ has positive measure, the definition of $c$-Lipschitz implies that $A_{\mathbf{y}} \cap [0,u+\ell-\eta-c]$ must have zero measure. Similarly $A_{\mathbf{x}}\setminus [u+\eta, 1]$ having positive measure implies $A_{\mathbf{y}} \cap [u+\eta+c,1]$ has zero measure. Overall we obtain $\vert A_{\mathbf{y}}\cap [u+\ell-\eta-c, u+\eta+c]\vert =\vert A_{\mathbf{y}}\vert$. 
Since $\eta>0$ was arbitrarily chosen, this implies in fact that up to a zero-measure set $A_{\mathbf{y}}$  is contained inside the interval $[u+\ell-c, u+c]$. In particular we must have $\ell\leq 2c$, and $\vert A_{\mathbf{y}}\vert \leq (2c-\ell)<c$.

	Now  partition $Q_n$ into pairs $\{\mathbf{v}\times\{0\}, \mathbf{v}\times \{1\}\}$, with $\mathbf{v}$ running over all possible choices $\mathbf{v}\in Q_{n-1}$. By the above, we have that each such pair $\{\mathbf{x}, \mathbf{y}\}$ contains at most one element of $\mathcal{X}$. Moreover if this element is $\mathbf{x}$ and satisfies $\vert A_{\mathbf{x}}\vert= \ell$, then $\vert A_{\mathbf{y}}\vert\leq (2c-\ell)$, and 
	\[\vert A_{\mathbf{x}}\vert\cdot \vert A_{\mathbf{y}}\vert\leq  \ell\cdot (2c-\ell)<c^2.\]	
	Thus we have
	\begin{align*}
	\vol(b)=\prod_{\mathbf{v}\in Q_{n-1}}\vert A_{\mathbf{v}\times\{0\}}\vert \vert A_{\mathbf{v}\times\{1\}} \vert &\leq c^{2^n-2\vert \mathcal{X} \vert}c^{2\vert \mathcal{X} \vert}= c^{2^n},
	\end{align*}
	with equality if and only if $\mathcal{X}=\emptyset$ and for every $\mathbf{x}\in Q_n$ we have $\vert A_{\mathbf{x}}\vert=c$.	This shows that  $\ex(Q_n, \mathcal{P}_n)\geq  -\vert Q_n\vert\log c$, as claimed.
\end{proof}
\begin{proof}[Proof of Theorem~\ref{thm: probability lipschitz}]
	Let $\mathcal{P}$ denote, as above, the property of being $c$-Lipschitz, viewed as a hereditary property of $[0,1]$-decorations of the good homogeneous ssee $\mathbf{Q}$. By Theorem~\ref{theorem: lipschitz}, $\pi(\mathcal{P})= -\log c$, whence the volume estimate
	\[\vol(\mathcal{P}_n)= e^{\left(\log (c)+o(1)\right)\vert Q_n\vert }= c^{2^{n}+o(2^n)}\]
	follows immediately from Theorem~\ref{theorem: counting for homogeneous ssee}, implying the desired estimate for $\mathbb{P}\left(f \in \mathcal{P}_n\right)$.
\end{proof}
\subsection{Metric polytopes}\label{subsection: metric polytopes}
Given a set $S$ and a positive integer $s$, we write $S^{(s)}$ for the collection of subsets of $S$ of size $s$.
Let $K_n=(V,E)$ denote the complete graph on the vertex-set $V=V(K_n)=[n]$ with edge-set $E=E(K_n)=[n]^{(2)}$. It is an easy exercise to check that the sequence $\left(E(K_n)_{n \in \mathbb{N}}\right)$ together with the collection of embeddings $\phi$ corresponding to graph isomorphisms $K_N\rightarrow K_n$ constitutes a good homogeneous ssee, which we denote by  $\mathbf{K}$.

A $[0,1]$-decoration $d\in [0,1]^{E(K_n)}$ of the edges of $K_n$ can be seen as an assignment of distances to pairs of vertices of $K_n$.  Let $\mathcal{M}_n$ denote the collection of 
such $d$ which satisfy the triangle inequality (and for which $(V(K_n), d)$ is thus a metric space). The body $\mathcal{M}_n$ is known as the \emph{metric polytope}. The property $\mathcal{M}=\left(\mathcal{M}_n\right)_{n\in \mathbb{N}}$ is hereditary, since a subset of a metric space is itself a metric space. Thus we can use an easy extremal argument together with our main results to estimate the volume of $\mathcal{M}_n$, and thereby prove a weak form of a recent theorem of Kozma, Meyerovitch, Peled and Samotij~\cite{KozmaMeyerovitchPeledSamotij21}.

\begin{theorem}[Rough estimate for the volume of the metric polytope]\label{theorem: volume metric polytope}
$\vol(\mathcal{M}_n)=\left(\frac{1}{2}\right)^{\binom{n}{2}+o(n^2)}$.
\end{theorem}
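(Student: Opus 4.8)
The plan is to apply Theorem~\ref{theorem: counting for homogeneous ssee} to the good homogeneous ssee $\mathbf{K}$ and the hereditary property $\mathcal{M}$, exactly as in the proof of Theorem~\ref{thm: probability lipschitz}. For this we only need to compute the limiting density $\pi(\mathcal{M})$, i.e.\ to show that the extremal entropy $\ex(E(K_n), \mathcal{M}_n)$ equals $\bigl(\log 2 + o(1)\bigr)\binom{n}{2}$, and then the volume estimate $\vol(\mathcal{M}_n) = e^{-(\log 2 + o(1))\binom{n}{2}} = (1/2)^{\binom{n}{2} + o(n^2)}$ drops out immediately. (We should also rule out the superexponential-decay alternative in Theorem~\ref{theorem: counting for homogeneous ssee}, but the box exhibited below shows $\ex(E(K_n),\mathcal{M}_n)/\binom{n}{2}$ is bounded, so that case does not arise.)

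\textbf{Upper bound on the extremal entropy (a large box inside $\mathcal{M}_n$).} First I would exhibit an explicit box $b = \prod_{e\in E(K_n)} A_e \subseteq \mathcal{M}_n$ of volume $2^{-\binom{n}{2}}$ (up to lower-order terms). The natural choice is $A_e = [\tfrac12, 1]$ for every edge $e$: if every pairwise distance lies in $[\tfrac12,1]$ then for any three vertices the two smaller distances sum to at least $1 \geq$ the largest distance, so the triangle inequality holds. Hence $[\tfrac12,1]^{E(K_n)} \subseteq \mathcal{M}_n$ and $\Ent(b) = \binom{n}{2}\log 2$, giving $\ex(E(K_n),\mathcal{M}_n) \leq \binom{n}{2}\log 2$.

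\textbf{Lower bound on the extremal entropy (no box beats volume $2^{-\binom n2}$).} This is the crux. Let $b = \prod_{e} A_e$ be a box with $\vol(b\setminus\mathcal{M}_n)=0$ maximising the entropy; I want $\vol(b) \leq 2^{-\binom n2 + o(n^2)}$, equivalently $\sum_e \log|A_e| \leq -(\log 2 - o(1))\binom n2$. The key observation is a local constraint: for any triangle $\{x,y,z\}$, the condition $\vol(b\setminus\mathcal{M}_n)=0$ forces the three sets $A_{xy}, A_{yz}, A_{xz}$ to satisfy the triangle inequality up to measure zero, and in particular (taking essential suprema and infima) $\esssup A_{xy} \leq \essinf A_{yz} + \essinf A_{xz}$ for each cyclic ordering, which after a short argument gives $|A_{xy}| + |A_{yz}| + |A_{xz}| \leq $ something, or more usefully that at most one of the three sets can have measure exceeding $\tfrac12$ while still making the product of the three at most $\tfrac18$ (mirroring the pairing argument in Theorem~\ref{theorem: lipschitz}). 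The clean way to package this: partition $E(K_n)$ into roughly $\binom n2 / 3$ edge-disjoint triangles (a Steiner triple system / near-decomposition of $K_n$ exists whenever $n \equiv 1,3 \pmod 6$, and for general $n$ one removes $o(n)$ vertices or covers the remaining $o(n^2)$ edges trivially by $|A_e|\le 1$), and on each triangle show $|A_{e_1}||A_{e_2}||A_{e_3}| \leq \tfrac18$. The per-triangle inequality itself follows because if, say, $|A_{xy}| = \ell > \tfrac12$, then choosing small $\eta>0$ and arguing as in Theorem~\ref{theorem: lipschitz} (points near both ends of a length-$\ell$ essential support of $A_{xy}$ survive on a positive-measure set), the triangle inequality forces $A_{yz}$ and $A_{xz}$ to be squeezed, yielding $|A_{yz}|, |A_{xz}| \leq 1-\ell+o(1)$ and hence $\ell(1-\ell)^2 \leq \tfrac18$ with a bit of room; the remaining cases where no set exceeds $\tfrac12$ give $|A_{e_1}||A_{e_2}||A_{e_3}| \leq \tfrac18$ trivially. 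Multiplying over all triangles in the decomposition and bounding the $o(n^2)$ leftover edges by $|A_e| \leq 1$ yields $\vol(b) \leq \tfrac18^{(1+o(1))\binom n2/3} = 2^{-(1+o(1))\binom n2}$, i.e.\ $\ex(E(K_n),\mathcal{M}_n) \geq (\log 2 - o(1))\binom n2$.

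\textbf{Conclusion.} Combining the two bounds, $\pi(\mathcal{M}) = \log 2$, and Theorem~\ref{theorem: counting for homogeneous ssee} gives $\vol(\mathcal{M}_n) = e^{-(\log 2 + o(1))\binom n2} = (1/2)^{\binom n2 + o(n^2)}$, as required. \textbf{The main obstacle} I anticipate is making the per-triangle inequality $|A_{e_1}||A_{e_2}||A_{e_3}| \leq \tfrac18$ fully rigorous from only the almost-everywhere triangle inequality on the box — one must be careful with essential suprema/infima and the "positive measure near both endpoints of the essential support" step — and handling the divisibility issue in decomposing $K_n$ into triangles, though the latter is standard and costs only an $o(n^2)$ error that is absorbed anyway.
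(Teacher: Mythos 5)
Your high-level strategy (compute $\pi(\mathcal{M})=\log 2$ and invoke Theorem~\ref{theorem: counting for homogeneous ssee}) and your upper bound on $\ex$ (the box $[\tfrac12,1]^{E(K_n)}$) match the paper exactly. For the lower bound, however, you take a genuinely different route: you prove a per-triangle volume bound of $1/8$ and then multiply over an (approximate) Steiner triple system decomposition of $E(K_n)$, absorbing $o(n^2)$ leftover edges. The paper instead does the $n=3$ case exactly as Theorem~\ref{theorem: extremal entropy metric polytope} (using essential infima $a_i$ and suprema $b_i$, the triangle-inequality constraints $b_i\leq a_{i+1}+a_{i+2}$, and AM--GM to get $\vol(b)\leq\min\bigl((A/3)^3,(1-A/3)^3\bigr)=2^{-3}$) and then appeals to Proposition~\ref{proposition: existence of limits from uniformity}, which says that $\ex(V_n,\PP_n)/\vert V_n\vert$ is non-decreasing for any hereditary property on a homogeneous ssee. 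That proposition is doing exactly the work of your Steiner-system decomposition, in a way that requires no design-theoretic input and no $o(n^2)$ slack: it gives the exact bound $\ex(\mathbf{K},\mathcal{M})_n\geq\binom n2\log 2$ for all $n\geq 3$. So the two arguments are morally the same, but the paper's is both cleaner and part of the general machinery developed there. If you were writing this up you should simply quote that proposition instead of constructing a triangle decomposition.

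One concrete flaw in your per-triangle sketch: the claim that $|A_{xy}|=\ell>\tfrac12$ forces each of $|A_{yz}|,|A_{xz}|\leq 1-\ell+o(1)$ individually is false. For example take $A_{xy}=[0.4,1]$ (so $\ell=0.6$), $a_{yz}=0.6$, $a_{xz}=0.4$: then the constraint $b_{xz}\leq a_{xy}+a_{yz}$ is vacuous and $|A_{xz}|$ can be as large as $0.6>1-\ell=0.4$. What the triangle-inequality constraints actually give, from $b_{yz}\leq a_{xy}+a_{xz}$ and $b_{xz}\leq a_{xy}+a_{yz}$, is the \emph{sum} bound $|A_{yz}|+|A_{xz}|\leq 2a_{xy}\leq 2(1-\ell)$, and hence by AM--GM the \emph{product} bound $|A_{yz}|\,|A_{xz}|\leq(1-\ell)^2$. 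That product bound is what you actually need, and it does give $\ell(1-\ell)^2\leq\tfrac18$ on $[\tfrac12,1]$, so your conclusion survives — but as stated the intermediate step is wrong, and the paper's summation-plus-AM--GM argument avoids the issue entirely.
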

\noindent Theorem~\ref{theorem: volume metric polytope} will follow from the following extremal result:
\begin{theorem}\label{theorem: extremal entropy metric polytope}
For all $n\geq 3$, $ \mathrm{ex}(\mathbf{K}, \mathcal{M})_n= \binom{n}{2}\log 2$.
\end{theorem}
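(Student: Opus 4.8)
The plan is to prove the two matching bounds on $\mathrm{ex}(\mathbf{K},\mathcal{M})_n$ separately. For the upper bound $\mathrm{ex}(\mathbf{K},\mathcal{M})_n\le \binom n2\log 2$ it is enough to exhibit a single box of volume $2^{-\binom n2}$ lying inside $\mathcal{M}_n$, and the natural candidate is $b=[\tfrac12,1]^{E(K_n)}$: any three reals in $[\tfrac12,1]$ satisfy all three triangle inequalities, since the sum of any two of them is at least $1$, which dominates the third, so $b\subseteq\mathcal{M}_n$; and $\Ent(b)=-\log\vol(b)=\binom n2\log 2$.

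For the lower bound I must show that every box $b=\prod_{e\in E(K_n)}A_e$ with $\vol(b\setminus\mathcal{M}_n)=0$ satisfies $\vol(b)\le 2^{-\binom n2}$. If some $|A_e|=0$ then $\vol(b)=0$ and there is nothing to prove, so assume $|A_e|>0$ for every edge. Write $m_e$ and $M_e$ for the essential infimum and essential supremum of $A_e$; then $0\le m_e<M_e\le 1$ and $|A_e|\le M_e-m_e=:w_e$. The key local step is that for every triangle $\{x,y,z\}$ of $K_n$ one has $M_{xy}\le m_{xz}+m_{yz}$ together with its two cyclic variants. To see this, note that since $b$ is a product and $\vol(b\setminus\mathcal{M}_n)=0$, Fubini's theorem gives that a.e.\ triple $(a,a',a'')\in A_{xy}\times A_{xz}\times A_{yz}$ satisfies $a\le a'+a''$; now for any $\delta>0$ the three sets $A_{xy}\cap(M_{xy}-\delta,M_{xy}]$, $A_{xz}\cap[m_{xz},m_{xz}+\delta)$ and $A_{yz}\cap[m_{yz},m_{yz}+\delta)$ each have positive measure (by the definition of essential inf and sup), so their product has positive measure and therefore contains such a triple, which forces $M_{xy}-\delta<(m_{xz}+\delta)+(m_{yz}+\delta)$; letting $\delta\to 0$ proves the claim.

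Summing the three inequalities over a single triangle $T$ gives $\sum_{e\in T}M_e\le 2\sum_{e\in T}m_e$; summing this over all $\binom n3$ triangles and using that every edge lies in exactly $n-2$ of them yields $\sum_{e}M_e\le 2\sum_e m_e$. Hence, using also $M_e\le 1$,
\[\sum_e w_e=\sum_e M_e-\sum_e m_e\le\tfrac12\sum_e M_e\le\tfrac12\binom n2,\]
and the AM--GM inequality applied to the $\binom n2$ nonnegative numbers $(w_e)$ gives $\vol(b)\le\prod_e w_e\le 2^{-\binom n2}$, i.e.\ $\Ent(b)\ge\binom n2\log 2$; taking the infimum over all admissible $b$ completes the lower bound. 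I expect the combinatorial double counting and the AM--GM step to be entirely routine; the only point needing genuine care is the derivation of the clean local inequalities $M_{xy}\le m_{xz}+m_{yz}$ from the mere almost-everywhere statement $\vol(b\setminus\mathcal{M}_n)=0$ for arbitrary measurable coordinate sets $A_e$ rather than intervals — which is precisely what the essential inf/sup and the positivity-of-product-measure argument above are there to handle.
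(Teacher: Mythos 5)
Your proof is correct, and it takes a genuinely different route from the paper's. Both proofs establish the upper bound the same way, via the box $[\tfrac12,1]^{E(K_n)}$. For the lower bound, the paper only argues the case $n=3$ directly (writing the box as $I_1\times I_2\times I_3$, bounding its volume both by $\prod_i(b_i-a_i)$ and by $\prod_i(1-a_i)$, and optimising over the parameter $A=\sum a_i$ to get $2^{-3}$), and then invokes the general monotonicity result for homogeneous ssee-s (Proposition~\ref{proposition: existence of limits from uniformity}) to promote the $n=3$ bound to all $n\geq 3$. You instead prove the general-$n$ bound directly by summing the local inequalities $M_{xy}\le m_{xz}+m_{yz}$ over all $\binom n3$ triangles and double-counting (each edge appears in $n-2$ triangles), then applying AM--GM once; this is more self-contained and avoids invoking the homogeneity machinery, at the cost of not illustrating the paper's general framework. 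Your treatment is also slightly more careful on a technical point: the paper's argument is phrased for simple boxes (so that $\min I_i$ and $\max I_i$ exist), whereas the extremal entropy is defined as an infimum over \emph{all} measurable boxes, and your use of essential infimum/supremum together with the positive-measure-product argument handles the general case cleanly. One small note: your combined bound $\sum_e w_e\le\tfrac12\binom n2$ absorbs into a single inequality the two-sided optimisation that the paper does with its $A$-parameter, which makes the AM--GM conclusion immediate.
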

\begin{proof}
For the upper bound, observe that the box $b_n=[\frac{1}{2},1]^{E(K_n)}\subseteq \mathcal{M}_n$ and has  entropy	$\binom{n}{2}\log 2$ (since clearly for every $d\in b_n$, the associated assignment of distances to the edges of $K_n$ satisfies the triangle inequality). Thus $ \mathrm{ex}(\mathbf{K}, \mathcal{M})_n\leq \binom{n}{2}\log 2$ for all $n\geq 3$.

For the lower bound, consider a simple box $b\in \mathrm{Box}(E(K_3))$ with $\vol(b\setminus \mathcal{M}_3)=0$. Then $b=I_1\times I_2\times I_3$, where $I_i$ is a union of nonempty intervals (and corresponds to the edge $[3]\setminus\{i\}$ of $K_3$). Set $a_i=\min I_i$ and $b_i=\max I_i$. Clearly, we have $b\subseteq \prod_i[a_i,b_i]\subseteq \prod_i[a_i,1]$. In particular we have
\begin{align}\label{eq: volume bound triangle polytope}
\vol(b)\leq \prod_{i} (b_i-a_i) \quad \text{and} \quad \vol(b)\leq \prod_i (1-a_i).
\end{align}
Let $A=\sum_i a_i$. Since $\vol(b\setminus \mathcal{M}_3)=0$, it follows by the triangle inequality that for all $i\in [3]$, $b_i\leq a_{i+1}+a_{i+2}$ (where the indices are taken modulo 3). Summing over all $i$ and subtracting $A$ from both sides, we get
$\sum_i (b_i-a_i)\leq A$.
Simple calculus then tells us that for $A$ fixed, we have $\prod_{i} (b_i-a_i)\leq \left(A/3\right)^3$. On the other hand, again by simple calculus, for $A$ fixed we have $\prod_{i} (1-a_i)\leq \left(1-A/3\right)^3$. Combining these two bounds with~\eqref{eq: volume bound triangle polytope}, we get that 
\begin{align*}
\vol(b)\leq\min\Bigl(\left(A/3\right)^3, \left(1-A/3\right)^3\Bigr)=2^{-3}.
\end{align*}
As $b$ was an arbitrary simple box with 	$\vol(b\setminus \mathcal{M}_3)=0$, this implies
$\ex(\mathbf{K}, \mathcal{M})_3=\binom{3}{2}\log 2$. Since $\mathbf{K}$ is homogeneous, it follows from Proposition~\ref{proposition: existence of limits from uniformity} that $\ex(\mathbf{K}, \mathcal{M})_n\geq \binom{n}{2}\log 2$ for all $n\geq 3$, as required.
\end{proof}
\begin{proof}[Proof of Theorem~\ref{theorem: volume metric polytope}]
	Immediate from Theorems~\ref{theorem: extremal entropy metric polytope} and~\ref{theorem: counting for homogeneous ssee} applied to the good homogeneous ssee $\mathbf{K}$.
\end{proof}
The problem of estimating $\vol(\mathcal{M}_n)$ has been previously considered by several other researchers, who obtained significantly stronger estimates than Theorem~\ref{theorem: volume metric polytope}. As observed by Kozma, Meyerovitch, Peled and Samotij, the problem of estimating $\vol(\mathcal{M}_n)$ is related to the problem of estimating the number of metric spaces  on $n$ points with integer distances, which was studied by Mubayi and Terry~\cite{MubayiTerry19b} using the container method. Balogh and Wagner~\cite[Theorem 3.7]{BaloghWagner16} used the container method to show
$\vol(\mathcal{M}_n) \leq \left(\frac{1}{2}\right)^{\binom{n}{2}+n^{11/6+o(1)}}$. Finally, in a recent and impressive paper using entropy techniques, Kozma, Meyerovitch, Peled and Samotij obtained much more precise estimates on $\vol(\mathcal{M}_n)$: they proved in ~\cite[Theorem 1.2]{KozmaMeyerovitchPeledSamotij21} that
\[  \left(\frac{1}{2}\right)^{\binom{n}{2}} e^{\frac{n^{3/2}}{6}+o(n^{3/2})}\leq \vol(\mathcal{M}_n) \leq \left(\frac{1}{2}\right)^{\binom{n}{2}} e^{O(n^{3/2})}.\]
They also showed in Section~5.3 of the paper, in joint work with Morris, how using the more precise container theorems of~\cite{BaloghMorrisSamotij15} (rather than the simple containers of~\cite{SaxtonThomason16} that underpin Theorem~\ref{theorem: ssee container} and all the results in this paper) could be made to yield slightly weaker upper bounds of $\left(\frac{1}{2}\right)^{\binom{n}{2}} e^{O(n^{3/2}(\log n))}$ on $ \vol(\mathcal{M}_n)$, improving on the earlier container results of Balogh and Wagner. The point of Theorem~\ref{theorem: volume metric polytope} above is thus not to prove a new or optimal upper bound, but rather to illustrate how the results of this paper give a simple, streamlined approach to such problems.

\subsection{Weighted graphs}
A $[0,1]$-decoration $w$ of the edges of $K_n$ may be identified may be viewed as an assignment of weights $w(e)\in [0,1]$ to the edges $e\in E(K_n)$. Given such a decoration, we may define the weight of a set $S\subseteq V(K_n)$ as $w(S):=\sum_{e\in S^{(2)}}w(e)$.  For a fixed integer $s\geq 2$ and a real number $r\in (0, \binom{s}{r}]$, let $\PP(s,r)$ be the hereditary property of $[0,1]$-decorations of the edges of  $\mathbf{K}$ corresponding to having no $s$-set of vertices whose weight exceeds $r$.
 \begin{theorem}\label{theorem: weighted graphs}
For all $n\geq s$ we have $\ex(E(K_n), \PP_n(s,r))=\binom{n}{2}\log \Bigl(\frac{\binom{s}{2}}{r}\Bigr)$, with equality uniquely attained up to zero-measure sets by the box $\Bigl[0, \frac{r}{\binom{s}{2}}\Bigr]^{E(K_n)}$.
\end{theorem}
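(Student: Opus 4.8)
The plan is to prove this directly by an extremal argument, mirroring the structure of the proofs of Theorem~\ref{theorem: lipschitz} and Theorem~\ref{theorem: extremal entropy metric polytope}: establish the upper bound via an explicit box, and the lower bound first in the base case $n=s$ and then for general $n$ via homogeneity and Proposition~\ref{proposition: existence of limits from uniformity}.

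\emph{Upper bound.} For the upper bound I would simply check that the box $b = \bigl[0, \frac{r}{\binom{s}{2}}\bigr]^{E(K_n)}$ satisfies $\vol(b \setminus \PP_n(s,r)) = 0$: for any decoration $w \in b$ and any $s$-set $S$, we have $w(S) = \sum_{e \in S^{(2)}} w(e) \leq \binom{s}{2} \cdot \frac{r}{\binom{s}{2}} = r$, so $w \in \PP_n(s,r)$. Hence $\ex(E(K_n), \PP_n(s,r)) \leq \Ent(b) = -\binom{n}{2}\log\bigl(\frac{r}{\binom{s}{2}}\bigr) = \binom{n}{2}\log\bigl(\frac{\binom{s}{2}}{r}\bigr)$.

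\emph{Lower bound, base case $n = s$.} Here $K_s$ has exactly $\binom{s}{2}$ edges, and the single constraint is $\sum_{e \in E(K_s)} w(e) \leq r$. Let $b = \prod_{e \in E(K_s)} A_e$ be a box with $\vol(b\setminus \PP_s(s,r)) = 0$; I may take $b$ simple, hence closed, and write $a_e = \max A_e$, so that $\vol(b) \leq \prod_e a_e$ and, since almost every point of $b$ lies in $\PP_s(s,r)$ and $b$ is closed, $\sum_e a_e \leq r$. By the AM--GM inequality, $\prod_{e \in E(K_s)} a_e \leq \bigl(\frac{1}{\binom{s}{2}}\sum_e a_e\bigr)^{\binom{s}{2}} \leq \bigl(\frac{r}{\binom{s}{2}}\bigr)^{\binom{s}{2}}$, with equality iff all $a_e$ are equal to $\frac{r}{\binom{s}{2}}$; moreover to have $\vol(b) = \prod_e a_e$ each $A_e$ must, up to measure zero, equal $[0,a_e]$. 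Taking logarithms gives $\Ent(b) \geq \binom{s}{2}\log\bigl(\frac{\binom{s}{2}}{r}\bigr)$, with equality characterized as claimed. Since an arbitrary box with $\vol(b \setminus \PP_s) = 0$ can be approximated from inside by simple boxes without decreasing its volume, the bound $\ex(E(K_s), \PP_s(s,r)) = \binom{s}{2}\log\bigl(\frac{\binom{s}{2}}{r}\bigr)$ and the uniqueness statement follow.

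\emph{Lower bound, general $n$, and obstacle.} Since $\mathbf{K}$ is a good homogeneous ssee and $\PP(s,r)$ is hereditary, Proposition~\ref{proposition: existence of limits from uniformity} gives that $\frac{\ex(E(K_n), \PP_n(s,r))}{\binom{n}{2}}$ is non-decreasing in $n$; combined with the base case this yields $\ex(E(K_n), \PP_n(s,r)) \geq \binom{n}{2} \cdot \frac{1}{\binom{s}{2}}\ex(E(K_s),\PP_s(s,r)) = \binom{n}{2}\log\bigl(\frac{\binom{s}{2}}{r}\bigr)$, matching the upper bound. The main point requiring care is the uniqueness (equality) statement for general $n$: I would argue that if $b = \prod_e A_e$ is an extremal box in $E(K_n)$, then for every $s$-set $S$ the sub-box $\prod_{e \in S^{(2)}} A_e$ must itself be extremal for $\PP_s(s,r)$ (otherwise, by the product-of-projections identity $\vol(b) = \bigl(\prod_\phi \vol(b_{\downarrow\phi})\bigr)^{1/k}$ used in Proposition~\ref{proposition: existence of limits from uniformity}, some projection would have too-small volume, contradicting extremality), so by the $n=s$ uniqueness each $A_e$ equals $[0, \frac{r}{\binom{s}{2}}]$ up to measure zero. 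The only mild subtlety is handling the measure-zero slack correctly when passing between an arbitrary extremal box and its simple approximations, and confirming that the edge sets $S^{(2)}$ as $S$ ranges over $s$-sets cover $E(K_n)$ — which they do for $n \geq s$, since every edge lies in some $s$-set.
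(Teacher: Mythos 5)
Your proof is correct in its overall structure, but it takes a genuinely different route from the paper's. The paper's proof works directly at the level of $K_n$: it first observes that (up to measure zero) any entropy-maximising box $b$ may be taken of the form $b=\prod_{e\in E(K_n)}[0,w(e)]$ for some $w\in\PP_n(s,r)$ (using that $\PP_n(s,r)$ is downward closed in the coordinatewise order), then averages the constraint $w(S)\leq r$ over all $s$-sets $S\in[n]^{(s)}$ to get $\sum_{e}w(e)\leq\frac{r}{\binom{s}{2}}\binom{n}{2}$, and finishes in one application of AM--GM, which simultaneously yields both the value $\binom{n}{2}\log\bigl(\binom{s}{2}/r\bigr)$ and the uniqueness statement. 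You instead mirror the metric-polytope proof: establish the base case $n=s$ by a direct argument, invoke the monotonicity of $\ex(V_n,\PP_n)/\lvert V_n\rvert$ from Proposition~\ref{proposition: existence of limits from uniformity} for the lower bound at general $n$, and then handle uniqueness via the product-of-projections identity~(\ref{eq: bounding volume by projections}) (each $s$-set projection of an extremal box must itself be extremal, and the $S^{(2)}$ cover $E(K_n)$). Both routes are valid; the paper's direct averaging is shorter and makes uniqueness come out for free, whereas yours requires the additional projection-extremality step, though that step is correct and the calculation ($\vol(b)^k=\prod_\phi\vol(b_{\downarrow\phi})$ forcing each factor to be extremal when $b$ is) does go through.

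One small technical point in your base case: taking $a_e=\max A_e$ and inferring $\sum_e a_e\leq r$ from ``$b$ closed and $\vol(b\setminus\PP_s)=0$'' is not quite right, since $\max A_e$ could sit in a degenerate (isolated) part of $A_e$ that contributes nothing to the volume --- e.g.\ $A_e=[0,\tfrac12]\cup\{1\}$. The correct quantity is the essential supremum $a_e:=\sup\{a:\lvert A_e\cap(a-\delta,a]\rvert>0\ \forall\delta>0\}$; with this choice one still has $\vol(b)\leq\prod_e a_e$, and if $\sum_e a_e>r$ then the set $\prod_e\bigl(A_e\cap(a_e-\delta,a_e]\bigr)$ has positive measure and lies outside $\PP_s$ for small $\delta$, giving the needed contradiction. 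This is exactly the role played by the paper's reduction to boxes of the form $\prod_e[0,w(e)]$, which sidesteps the issue from the start.
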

\begin{proof}
For the upper bound, note that $\Bigl[0, \frac{r}{\binom{s}{2}}\Bigr]^{E(K_n)}$ is a box lying wholly inside $\mathcal{P}_n$ and having the claimed volume. For the lower bound, note first of all that up to a zero-measure set, any entropy-maximiser $b$ for $\mathcal{P}_n$ must be of the form $b=\prod_{e\in E(K_n)}[0, w(e)]$, where $w: \ E(K_n)\rightarrow [0,1]$ is an edge-weighting from $\mathcal{P}_n$. By averaging the weight $w(S)$ over all $s$-sets $S$ and using the fact that $w\in \mathcal{P}(s,r)$, we see that
\begin{align*}
\binom{n-2}{s-2}w(V(K_n))=\sum_{S\in [n]^{(s)}}w(S)\leq r\binom{n}{s}.
\end{align*}
In particular, $\sum_{e\in E(K_n)} w(e)= w(K_n)\leq \frac{r}{\binom{s}{2}} \binom{n}{2}$. It then follows from the AM-GM inequality that
\begin{align*}
\vol(b) =\prod_{e\in E(K_n)} w(e)\leq \left(\frac{r}{\binom{s}{2}}\right)^{\binom{n}{2}},
\end{align*}
which gives the required lower bound on the entropy of $b$. Furthermore the AM-GM inequality also implies that, equality is attained if and only if $w(e)=[0, r/\binom{s}{2}]$ for every $e\in E(K_n)$, i.e. if and only if up to a zero-measure set $b$ is equal to the box $[0,w]^{E(K_n)}$, as claimed.
\end{proof}

\begin{corollary}\label{cor: counting weighted graphs}
$\vol(\PP_n(s,r))= \Bigl(\frac{r}{\binom{s}{2}}+o(1)\Bigr)^{\binom{n}{2}}$.	
\end{corollary}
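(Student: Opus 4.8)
The plan is to deduce this directly from Theorem~\ref{theorem: weighted graphs} together with Theorem~\ref{theorem: counting for homogeneous ssee}, in exactly the way Theorem~\ref{theorem: volume metric polytope} was deduced from Theorem~\ref{theorem: extremal entropy metric polytope}. First I would recall that $\mathbf{K}$ is a good homogeneous ssee, that $\PP(s,r)$ is a hereditary property of its $[0,1]$-decorations, and that $\vert V_n\vert=\vert E(K_n)\vert=\binom{n}{2}$. By Theorem~\ref{theorem: weighted graphs}, for all $n\geq s$,
\[ \frac{\ex(E(K_n),\PP_n(s,r))}{\vert V_n\vert}=\log\left(\frac{\binom{s}{2}}{r}\right), \]
which is a constant independent of $n$ (and finite, since $r\in(0,\binom{s}{2}]$ forces $\binom{s}{2}/r\geq 1$). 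In particular this sequence does not tend to infinity, so we are in the first alternative of Theorem~\ref{theorem: counting for homogeneous ssee}: the limit $\pi(\PP(s,r))$ exists and equals $\log(\binom{s}{2}/r)$, and $\vol(\PP_n(s,r))=e^{-(\pi(\PP(s,r))+o(1))\vert V_n\vert}$.

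It then remains only to rewrite this estimate in the form claimed. Writing $\vert V_n\vert=\binom{n}{2}$ and $e^{-\pi(\PP(s,r))}=r/\binom{s}{2}$, one has
\[ \vol(\PP_n(s,r))=\left(\frac{r}{\binom{s}{2}}\right)^{\binom{n}{2}}e^{-o(1)\binom{n}{2}}=\left(\frac{r}{\binom{s}{2}}+o(1)\right)^{\binom{n}{2}}, \]
since $e^{-o(1)}=1+o(1)$. I expect no real obstacle here: the only point requiring any care is to confirm that the explicit value of the extremal entropy furnished by Theorem~\ref{theorem: weighted graphs} places us in the convergent case of Theorem~\ref{theorem: counting for homogeneous ssee} rather than the superexponentially-decaying one, and this is immediate from that value being a finite constant.
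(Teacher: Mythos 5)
Your proof is correct and follows essentially the same route as the paper: read off $\pi(\PP(s,r))$ from Theorem~\ref{theorem: weighted graphs}, note that it is a finite constant so the convergent alternative of Theorem~\ref{theorem: counting for homogeneous ssee} applies, and rewrite the resulting exponential estimate in the stated form. (If anything, you are slightly more careful than the paper's one-line proof, which appears to misstate $\pi(\PP(s,r))$ as $r/\binom{s}{2}$ rather than $\log\bigl(\binom{s}{2}/r\bigr)$; your value is the correct one, and the final asymptotic is the same.)
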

\begin{proof}
Theorem~\ref{theorem: weighted graphs} shows $\pi(\PP(s,r))= r/\binom{s}{2}$. The result is then immediate from an application of Theorem~\ref{theorem: counting for homogeneous ssee} to the good, homogeneous ssee $\mathbf{K}$.
\end{proof}

We should note here that Mubayi and Terry~\cite{MubayiTerry19, MubayiTerry20}  considered the related problem of maximising the product of edge-multiplicities in multigraphs in which every $s$-set of vertices spans at most $r$ edges. In this case the fact that edge-multiplicities have to be positive integers completely changes the nature of the problem, which  becomes highly nontrivial (see~\cite{DayFalgasRavryTreglown20, FalgasRavry21} for recent progress on the Mubayi--Terry problem).

\section{Entropy of $[k]$-decorated graph limits}\label{section: graph limits}
As another application of our work, we prove a generalisation of the result of Hatami, Janson and Szegedy on the entropy of graph limits to $[k]$-decorated graph limits. Hatami, Janson and Szegedy defined and studied the entropy of a graphon in~\cite{HatamiJansonSzegedy18} .  They used this notion to give an alternative proof of the Alekseev--Bollob\'as--Thomason Theorem~\cite{Alekseev93, BollobasThomason97} and to describe the typical structure of a graph in a hereditary property. The Hatami--Janson--Szegedy notion of entropy can be viewed as a graphon analogue of the classical notion of the entropy of a discrete random variable. Generalising their result to $[k]$-decorated graphons was one of the original motivations for our foray into container theory (as containers allow for an easy transfer of certain results to the limit setting). In fact, we sought unsuccessfully to obtain a generalisation to $[0,1]$-decorated graph limits, which we now define.

 Let $\mathcal{K}$ be a compact second-countable Hausdorff space. A $\mathcal{K}$-decorated graph is a function $w: E(K_n) \to\mathcal{K}$ assigning to each edge of the complete graph $K_n$ a label from $\mathcal{K}$.  In~\cite{LovaszSzegedy10}, Lov\'asz and Szegedy initiated the study of the limits of sequences of $\mathcal{K}$-decorated graphs. Generalising the well established theory of graph limits, given a $\mathcal{K}$-decorated graph $G$ they defined homorphism densities $t(F, G)$ for $C[\mathcal{K}]$-decorated graphs $F$ in $G$, where $C[\mathcal{K}]$ is the collection of all continuous functions $\mathcal{K}\rightarrow \mathbb{R}$. They defined convergence relative to this notion of homomorphism density, and showed the limit objects in this theory were \emph{$\mathcal{K}$-graphons}, which are symmetric measurable functions $W: \ [0,1]^2\rightarrow \mathcal{M}(\mathcal{K})$, where $\mathcal{M}[\mathcal{K}]$ denotes the set of Borel probability measures on $\mathcal{K}$.  Further contributions to the study of $\mathcal{K}$-decorated graph limits were  made by Kunszenti-Kov{\'a}cs, Lov{\'a}sz and Szegedy~\cite{KKLS14}, who defined a modified notion of cut distance (called jumble distance, which they used to provide weak regularity lemma and a counting lemma for $\mathcal{K}$-graphons) and showed that the space of $\mathcal{K}$-decorated graph limits was closed under the homomorphism density notion of convergence. However a number of questions, such as the uniqueness of the representation of the limits of  $\mathcal{K}$-decorated graph sequences or the compactness of that space under their modified notion of cut distance, remain open.

Our goal was to extend the results of Hatami--Janson--Szegedy on graphon entropy to $[0,1]$-decorated graphons. However, we were unable to show that every sequence of $[0,1]$-decorated graphs contains a convergent subsequence of $[0,1]$-decorated graphs. Thus we had to content ourselves with proving a generalisation of Hatami--Janson--Szegedy  to the easier setting of $[k]$-decorated graphs, which we now give. Before we can state our results, we must recall the definitions of templates and realisations from Section~\ref{section: proofs}, and also recall from~\cite{FalgasRavryOconnellUzzell18} that the entropy of a $[k]$-colouring template $t$ for $K_n$ is 
\[ \Ent(t) := \log_k \prod_{e \in E(K_n)} |t(e)|. \]
Note that for any template $t$ we have $0 \leq \Ent(t) \leq \binom{n}{2}$, and the number of realisations of $t$ is exactly $| \langle t \rangle | = k^{\Ent(t)}$. 
For a hereditary property $\mathcal{P}$ of $[k]$-decorated graphs, we defined
\[ \ex(n,\mathcal{P}_n) := \max \{ \Ent(t): \text{ $t$ is a $k$-colouring template for $K_n$ with $\langle t \rangle \subseteq \mathcal{P}_n$} \}. \]
Finally, let us recall the $[k]$-decorated graph analogue of Theorem~\ref{theorem: counting for homogeneous ssee} from~\cite{FalgasRavryOconnellUzzell18}.
\begin{theorem}[Corollary 2.15 in \cite{FalgasRavryOconnellUzzell18}]\label{theorem: Cor2.15 from companion}
Let $\mathcal{P}$ be a hereditary property of $[k]$-decorated graphs with $\PP_n \not= \emptyset$ for every $n \in \mathbb{N}$ and let $\eps >0$ be fixed. There exists $n_0 \in \mathbb{N}$ such that for all $n \geq n_0$, 
\[ k^{\pi(\PP) \binom{n}{2}} \leq |\PP_n | \leq k^{(\pi(\PP)+\eps)\binom{n}{2}}.\]
\end{theorem}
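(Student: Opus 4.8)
The plan is to deduce this from Theorem~\ref{theorem: counting for homogeneous ssee} applied to the good homogeneous ssee $\mathbf{K}$, via the obvious encoding of $[k]$-colourings as $[0,1]$-decorations. Given $c\in[k]^{E(K_n)}$, let $G_c:=\prod_{e\in E(K_n)}\bigl[\tfrac{c(e)-1}{k},\tfrac{c(e)}{k}\bigr]$ be the associated ``grid cell'', a $k$-rational box of volume $k^{-\binom{n}{2}}$, and put $\PP'_n:=\bigcup_{c\in\PP_n}G_c$. Since distinct grid cells overlap only in a null set, $\vol(\PP'_n)=|\PP_n|\,k^{-\binom{n}{2}}$. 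I would first check that $\PP'$ is a hereditary $[0,1]$-decoration property of $\mathbf{K}$: if $x\in[0,1]^{E(K_N)}\setminus\PP'_N$ and some $y\in x_{\uparrow\phi}$ lay in $G_d$ with $d\in\PP_n$, then $x$ would lie in $G_{d_{\downarrow\phi}}$ with $d_{\downarrow\phi}\in\PP_N$ by heredity of $\PP$, a contradiction. Next I would establish the extremal-entropy identity $\ex(E(K_n),\PP'_n)=\bigl(\binom{n}{2}-\ex(n,\PP_n)\bigr)\log k$: the bound ``$\leq$'' comes from the box $\prod_e\bigl(\bigcup_{i\in t(e)}[\tfrac{i-1}{k},\tfrac{i}{k}]\bigr)$ for an optimal template $t$ with $\langle t\rangle\subseteq\PP_n$, and for ``$\geq$'', given any box $b=\prod_e A_e$ with $\vol(b\setminus\PP'_n)=0$, the template $t_b(e):=\{i\in[k]:|A_e\cap[\tfrac{i-1}{k},\tfrac{i}{k}]|>0\}$ satisfies $\langle t_b\rangle\subseteq\PP_n$ (every realisation's grid cell meets $b$ in positive volume, hence lies in $\PP'_n$) and $\vol(b)\le k^{\Ent(t_b)-\binom{n}{2}}\le k^{\ex(n,\PP_n)-\binom{n}{2}}$. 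Dividing by $\binom{n}{2}$ and letting $n\to\infty$ (the limit exists by Proposition~\ref{proposition: existence of limits from uniformity}, the sequence being bounded by $\log k$) then yields $\pi(\PP')=(1-\pi(\PP))\log k$.

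For the upper bound on $|\PP_n|$, Theorem~\ref{theorem: counting for homogeneous ssee} applied to $\PP'$ lands in its first case (as $\ex(E(K_n),\PP'_n)/\binom{n}{2}\le\log k<\infty$), giving $\vol(\PP'_n)=e^{-(\pi(\PP')+o(1))\binom{n}{2}}$, whence
\[ |\PP_n|=\vol(\PP'_n)\,k^{\binom{n}{2}}=e^{(\log k-\pi(\PP')+o(1))\binom{n}{2}}=k^{(\pi(\PP)+o(1))\binom{n}{2}}, \]
and in particular $|\PP_n|\le k^{(\pi(\PP)+\eps)\binom{n}{2}}$ for $n$ large (only the easy inequality $\pi(\PP')\ge(1-\pi(\PP))\log k$ is needed for this direction). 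For the \emph{exact} lower bound I would instead argue directly in the $[k]$-setting: repeating the proof of Proposition~\ref{proposition: existence of limits from uniformity}, but with $|\langle t\rangle|=k^{\Ent(t)}$ and the constraint that the projection of a template to a lower level has \emph{at most} $\ex$-many realisations, one finds that the direction of monotonicity reverses, so that $\ex(n,\PP_n)/\binom{n}{2}$ is \emph{non-increasing} in $n$ and hence $\geq\pi(\PP)$ for every $n$; taking an optimal template $t_n$ with $\langle t_n\rangle\subseteq\PP_n$ then gives $|\PP_n|\ge|\langle t_n\rangle|=k^{\ex(n,\PP_n)}\ge k^{\pi(\PP)\binom{n}{2}}$.

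I expect the main obstacle to be a matter of care rather than of ideas: in the encoding route one must check that ``up to a null set'' inclusions of boxes into a union of grid cells faithfully translate into templated inclusions $\langle t_b\rangle\subseteq\PP_n$ (the rounding-up step above), and one must keep in mind that the monotonicity of Proposition~\ref{proposition: existence of limits from uniformity} points the opposite way for $[k]$-decorations than for $[0,1]$-decorations. Alternatively --- and this is how the result is obtained in~\cite{FalgasRavryOconnellUzzell18}, where it appears as Corollary~2.15 --- one can bypass the encoding entirely and repeat the argument of Theorem~\ref{theorem: counting for homogeneous ssee} essentially verbatim, using the discrete container theorem, Theorem~\ref{theorem: ssee container}, in place of Theorem~\ref{theorem: containers for hereditary bodies}, together with the corresponding $[k]$-analogue of its supersaturation step.
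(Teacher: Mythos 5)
The paper does not prove this statement --- it is cited from the prequel~\cite{FalgasRavryOconnellUzzell18}, where (as you correctly note at the end) it is obtained directly in the discrete $[k]$-colouring setting by running essentially the argument of Theorem~\ref{theorem: counting for homogeneous ssee} with the discrete container theorem, Theorem~\ref{theorem: ssee container}, in place of Theorem~\ref{theorem: containers for hereditary bodies}. Your primary proposal is a correct and genuinely different derivation: encode $[k]$-colourings into $[0,1]^{E(K_n)}$ via grid cells and deduce the statement from Theorem~\ref{theorem: counting for homogeneous ssee} applied to $\PP'_n:=\bigcup_{c\in\PP_n}G_c$. All the verifications go through as you sketch them: $\PP'$ is hereditary for the good homogeneous ssee $\mathbf{K}$ (the lift/projection check with $d_{\downarrow\phi}$ is exactly right and needs no measure-zero caveat, since the grid cells are honest closed boxes); the rounding-up template $t_b$ correctly gives $\ex(E(K_n),\PP'_n)=\bigl(\binom{n}{2}-\ex(n,\PP_n)\bigr)\log k$, using that a realisation of $t_b$ has a grid cell meeting $b$ in positive volume while distinct grid cells meet in null sets; and transporting Proposition~\ref{proposition: existence of limits from uniformity} to the discrete extremal entropy indeed flips the monotonicity to non-increasing, which yields the exact lower bound $|\PP_n|\geq k^{\pi(\PP)\binom{n}{2}}$ for every $n$, not merely eventually. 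The trade-off is worth being explicit about: your route is not a logical shortcut, since Theorem~\ref{theorem: counting for homogeneous ssee} is itself built on Theorem~\ref{theorem: ssee container} from the prequel, so both proofs run on the same container engine. What your encoding buys is a clean demonstration that the continuous $[0,1]$-decoration machinery of this paper genuinely subsumes the discrete $[k]$-decoration results it was built from.
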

\noindent Given a $[k]$-graphon $W$ and $i\in [k]$, we denote by $W_i (x,y) :=W(x,y)(i)$ the probability of $\{i\}$ under the probability measure $W(x,y)$ on $[k]$. As noted by Lov\'asz and Szegedy~\cite[Example 2.8]{LovaszSzegedy10} each $W_i$ is a graphon. Given a probability measure $P$ on $[k]$ with $P(i)=p_i$, we define the \emph{$k$-ary entropy} of $P$ to be
\[ h_k(P) := \sum_{i \in [k]} -p_i \log_k p_i.\]
Then the \emph{entropy} of a $[k]$-graphon $W$ is 
\[ \Ent(W) := \int \int_{[0,1]^2} h_k (W_k(x,y)) dA.\]
Note that $0 \leq \Ent(W) \leq 1$. For $k = 2$ our definition of decorated graphon entropy coincides with that of Hatami, Janson and Szegedy. Given a property $\PP$ of $[k]$-decorated graphs, we denote by $\hat{\PP}$ the closure under the cut norm (see Section~\ref{subsection: cut distance} below for a definition of the cut norm) of the collection of $[k]$-graphons that can be obtained as a limit of a convergent sequence of elements of $\PP$.  We can at last state the main result of this section. 
\begin{theorem}\label{theorem: maximum entropy gives growth rate for [k]}
Let $\PP$ be a hereditary property of $[k]$-decorated graphs and let $m(\hat{\PP}):=\max_{W\in \hat{\PP}} \Ent(W)$.
Then 
\[ \lim_{n \to \infty} \frac{\log_k |\PP_n|}{\binom{n}{2}} = m(\hat{\PP}).\]
\end{theorem}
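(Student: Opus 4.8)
The plan is to prove the two inequalities in Theorem~\ref{theorem: maximum entropy gives growth rate for [k]} separately, leveraging Theorem~\ref{theorem: Cor2.15 from companion} together with the relation $\pi(\PP)=\lim_{n\to\infty}\ex(n,\PP_n)/\binom{n}{2}$, so that it suffices to show $\pi(\PP)=m(\hat{\PP})$.

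First I would establish the lower bound $\pi(\PP)\ge m(\hat{\PP})$. Take a $[k]$-graphon $W\in\hat{\PP}$ achieving $\Ent(W)=m(\hat{\PP})$; by definition of $\hat{\PP}$ and continuity of graphon entropy under cut-norm convergence (which must be checked — graphon entropy is cut-norm continuous since it is an integral of the bounded continuous-on-the-simplex function $h_k$ applied to the coordinate graphons $W_i$, and the $W_i$ converge in cut norm), we may assume $W$ is itself a limit of a convergent sequence $(G_m)\subseteq\PP$. From $W$ one manufactures, for each large $n$, a $[k]$-colouring template $t_n$ for $K_n$ with $\langle t_n\rangle\subseteq\PP_n$ and $\Ent(t_n)\ge(m(\hat\PP)-\eps)\binom{n}{2}$: the standard recipe is to take a fine step-function approximation $W'$ of $W$ (a weighted graph on a bounded number of parts), blow it up to $n$ vertices, and at each edge $e$ let $t_n(e)$ be the set of colours $i$ with $W'_i$ above a small threshold on the corresponding block; the product $\prod_e|t_n(e)|$ is then $k^{(\Ent(W)-o(1))\binom n2}$ by a direct computation, while heredity of $\PP$ plus the fact that $W$ is a limit of members of $\PP$ forces every realisation of $t_n$ to avoid all forbidden configurations (one uses a counting/removal-type argument: a realisation containing a forbidden subgraph would survive into the limit, contradicting $W\in\hat\PP$). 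This yields $\ex(n,\PP_n)\ge(m(\hat\PP)-\eps)\binom n2$, hence $\pi(\PP)\ge m(\hat\PP)$.

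For the upper bound $\pi(\PP)\le m(\hat\PP)$, I would run the container method of Theorem~\ref{theorem: ssee container} in the $[k]$-decorated setting for the ssee $\mathbf{K}$: fix $\eps>0$, apply the container theorem to obtain a family $\mathcal{T}_n$ of templates of size $k^{\eps\binom n2}$ covering $\PP_n$ such that each $t\in\mathcal{T}_n$ has only $\le\eps|\binom{V_n}{V_N}|$ embedded forbidden configurations. Pick for each $n$ a template $t_n\in\mathcal{T}_n$ with $\Ent(t_n)=\ex(n,\PP_n)/\binom n2\cdot\binom n2$ maximal (up to the covering this is essentially $\ex(n,\PP_n)$), view $t_n$ as a $[k]$-graphon-valued object — a step function $W^{(n)}$ assigning to each edge-block the uniform measure on $t_n(e)\subseteq[k]$, normalised — and extract, via the compactness of the $[k]$-graphon space under cut distance (Kunszenti-Kov\'acs--Lov\'asz--Szegedy), a subsequential cut-limit $W$. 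One checks that $\Ent(W)=\lim\Ent(t_n)/\binom n2=\pi(\PP)$ (again cut-norm continuity of entropy, plus the identity $|\langle t\rangle|=k^{\Ent(t)}$), and that the "almost no forbidden copies" property (b) of the container theorem passes to the limit to give $W\in\hat\PP$ — this is where a counting lemma for $[k]$-graphons is needed to say that vanishingly few forbidden embeddings in $t_n$ implies the corresponding homomorphism density is zero in the limit, so $W$ is genuinely in the closure of $\PP$. Then $\pi(\PP)=\Ent(W)\le m(\hat\PP)$.

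\textbf{Main obstacle.} The delicate point is the transfer of the container conclusion (b) to a genuine membership $W\in\hat\PP$: one must argue that the template limit $W$ really lies in the cut-closure of $\PP$ and not merely in some larger set, which requires a removal/counting lemma for $[k]$-decorated graphons to convert "few embedded forbidden colourings" into "no forbidden subgraph densities in the limit", and then a cleanup step (removing a sparse set of edges, i.e. a slight perturbation of $W$) so that an actual convergent sequence of members of $\PP$ limiting to $W$ can be exhibited. Establishing cut-norm continuity of $\Ent$ on $[k]$-graphons and the existence of the required compactness/counting lemmas in the decorated setting — citing \cite{KKLS14} — is the technical heart; once those are in place, both inequalities follow by the routine computations sketched above.
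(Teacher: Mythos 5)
Your proposal takes a substantially different — and harder — route than the paper, and it has one concrete error plus a genuine gap that the paper's argument sidesteps.

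\textbf{The continuity claim is false.} Graphon entropy is \emph{not} continuous under cut-norm convergence, and your justification (``$h_k$ is bounded continuous and the $W_i$ converge in cut norm'') does not work: cut-norm convergence does not survive post-composition with a non-linear function. A standard counterexample: take $\{0,1\}$-valued graphons $W_n$ that oscillate with increasing frequency so that $\cutnorm{W_n - \tfrac{1}{2}}\to 0$; then $\Ent(W_n)=0$ for all $n$ while $\Ent(\tfrac12)=\log_k 2>0$. What \emph{is} true, and what the paper implicitly uses, is upper semi-continuity: $\Ent(W)\geq \limsup_n \Ent(W_n)$ whenever $\deltakcut(W_n,W)\to 0$. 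This one-sided inequality suffices for the direction $\pi(\PP)\leq m(\hat{\PP})$ (you only need $\Ent(W)\geq\pi(\PP)$, not equality), so the argument can be repaired — but your proposal relies on the two-sided statement as written.

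\textbf{The container detour creates an unnecessary gap.} For $\pi(\PP)\leq m(\hat{\PP})$ you invoke the container theorem and then need a removal/counting lemma for $[k]$-decorated graphons to upgrade ``each container has $\leq\eps\big|\binom{V_n}{V_N}\big|$ forbidden embeddings'' to ``the cut-limit lies in $\hat{\PP}$.'' You correctly flag this as the technical heart of your argument, but it is not established, and the paper avoids it entirely. The paper's move is to forget about containers for this direction and work directly with the \emph{extremal} templates $t_n$ realising $\ex(n,\PP_n)$. Since $\langle t_n\rangle\subseteq\PP_n$ with no exceptions, for any fixed $N$ the $W_{t_n}$-random $k$-colouring of $K_N$ lies in $\PP_N$ with probability $1-O(N^2/n)$ (the only failures are collisions among the $N$ sampled blocks). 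Passing to a cut-limit $W$ via Theorem~\ref{theorem: graphon compactness} then gives that a $W$-random colouring lies in $\PP$ with probability one, hence $W\in\hat{\PP}$, with no removal lemma required. Similarly, for the other direction $\pi(\PP)\geq m(\hat{\PP})$, the paper does not do step-function approximation, blow-up, and thresholding: it simply samples $n$ uniform points, forms the template $t_W[x_1,\dots,x_n]$, uses that its expected entropy is at least $m(\hat{\PP})\binom{n}{2}-O(n)$ (since $\log_k|\supp P|\geq h_k(P)$), and notes that $\langle t_W[x_1,\dots,x_n]\rangle\subseteq\PP_n$ almost surely because $W\in\hat{\PP}$ forces the $W$-random colouring into $\PP_n$ with probability one and $\PP_n$ is a finite set. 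This immediately gives $|\PP_n|\geq k^{m(\hat{\PP})\binom{n}{2}}$ and the conclusion follows from Theorem~\ref{theorem: Cor2.15 from companion}. Your proposal is recoverable in outline, but it trades the paper's two short probabilistic observations for two unresolved technical lemmas (a decorated removal lemma and full cut-norm continuity of entropy), one of which is false as stated.
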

Given Theorem~\ref{theorem: Cor2.15 from companion}, the theorem above is equivalent to the assertion that $\pi(\PP)=m(\hat{\PP})$, which in fact is what we shall prove.

\subsection{A cut distance for $[k]$-graphons}\label{subsection: cut distance}
We require convergence in our proof, but rather than using convergence with respect to homomorphisms, we use convergence with respect to an appropriately defined cut distance. Frieze and Kannan~\cite{FriezeKannan99} introduced a cut norm $\cutnorm{\cdot}$ that has become central to the theory of graph limits. (see~\cite[Section~4]{Janson13} for an overview of the history of the cut norm in other contexts).  The \emph{cut norm} of a graphon~$W$ is
\begin{align*}
\cutnorm{W} := \sup_{S, T \subseteq \oi} \biggl\lvert \int_{S \times T} W(x, y)\,dx dy\biggr\rvert,
\end{align*}
where the supremum is over all pairs $(S,T)$ of measurable subsets of~$\oi$.  If $U$ and $W$ are graphons, then
\begin{align*}
\dcut(U, W) := \cutnorm{U - W} = \sup_{S, T \subseteq \oi} \biggl\lvert \int_{S \times T} \bigl(U(x, y) - W(x, y)\bigr)\,dx dy\biggr\rvert.
\end{align*}
Given a measure-preserving transformation~$\varphi : \oi \to \oi$, we define $W^{\varphi}$ by $W^{\varphi}(x, y) := W(\varphi(x), \varphi(y))$.  The \emph{cut distance} between $U$ and $W$ is
\begin{align*} 
\deltacut(U, W) := \inf_{\varphi : \oi \to \oi} \dcut(U, W^{\varphi}),
\end{align*}
where the infimum is taken over all measure-preserving transformations~$\varphi : \oi \to \oi$. 
We introduce an appropriate generalisation of the cut distance here (this was also previously considered in~\cite{KKLS14}). If $G$ and $H$ are two $[k]$-decorated graphs with edge labellings $g,h: E(K_n) \to [k]$ respectively and with vertex set $[n]$, we define
\[ \dkcut(G,H) := \max_{S,T \subseteq [n]} \frac{1}{n^2} \sum_{i=1}^{k} \left| \sum_{(u,v) \in S \times T} (\mathbbm{1}(g(uv)=i)-\mathbbm{1}(h(uv)=i)) \right|.\]
If $U$ and $W$ are $[k]$-graphons, we define
\[ \dkcut(U,W) := \sup_{S,T \subseteq [0,1]} \sum_{i=1}^{k} \left| \int_{S \times T} (U_i(x,y) - W_i(x,y) dxdy \right|.\]
We define the cut distance $\deltakcut$ for $[k]$-graphons analogously to the definition for graphons, mutatis mutandis. Letting $\mathcal{W}_k$ denote the set of all $[k]$-graphons, we let $\widetilde{\mathcal{W}_k}$ denote the quotient of $\mathcal{W}_k$ obtained by identifying $U$ and $W$ whenever $\deltakcut(U,W)=0$. 
\begin{theorem}\label{theorem: graphon compactness}
The space $(\widetilde{\mathcal{W}_k}, \deltakcut)$ is compact.
\end{theorem}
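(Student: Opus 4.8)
The plan is to reduce Theorem~\ref{theorem: graphon compactness} to the classical compactness of $(\widetilde{\WW}, \deltacut)$ for ordinary graphons, due to Lov\'asz and Szegedy, by viewing a $[k]$-graphon $W$ as a $k$-tuple $(W_1,\dots,W_k)$ of ordinary graphons subject to the linear constraint $\sum_{i=1}^k W_i \equiv 1$ (and $W_i \geq 0$ pointwise). First I would observe that $\dkcut$ is, up to the universal constants $1$ and $k$, equivalent to the sum $\sum_{i=1}^k \dcut(U_i, W_i)$: indeed choosing the optimal $S,T$ for each coordinate separately and then taking the worst shows $\max_i \dcut(U_i,W_i) \le \dkcut(U,W) \le \sum_i \dcut(U_i,W_i) \le k \max_i \dcut(U_i,W_i)$ is not quite right because the sup in $\dkcut$ is over a \emph{single} pair $(S,T)$ common to all $i$; but splitting each $|\int_{S\times T}(U_i-W_i)|$ into its positive and negative part and re-choosing $S,T$ coordinatewise gives $\sum_i \dcut(U_i,W_i) \le 2k\, \dkcut(U,W)$, while the reverse inequality $\dkcut(U,W) \le \sum_i \dcut(U_i,W_i)$ is immediate from the triangle inequality inside the sum. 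Hence $\dkcut$ and $\sum_i \dcut$ induce the same topology; since measure-preserving maps act diagonally on the tuple, the same equivalence passes to $\deltakcut$ versus $\sum_i \deltacut$ (one must be slightly careful that the infimum over $\varphi$ is taken simultaneously for all coordinates, but the inequalities above are preserved coordinate-by-coordinate under a \emph{common} $\varphi$, so the sandwiching survives).

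Given that equivalence, the proof strategy is: take any sequence $(W^{(m)})_{m\in\N}$ of $[k]$-graphons. Apply the Lov\'asz--Szegedy compactness theorem to the graphon sequence $(W^{(m)}_1)_m$ to extract a $\deltacut$-convergent subsequence with limit $U_1$ (after replacing each $W^{(m)}$ by a measure-preserving rearrangement achieving near-optimal $\dcut$ to $U_1$); then pass to a further subsequence so that $(W^{(m)}_2)_m$ converges, and so on through $i=1,\dots,k$. This yields a subsequence and graphons $U_1,\dots,U_k$ with $\dcut(W^{(m)}_i, U_i) \to 0$ for every $i$ along suitably chosen rearrangements. The remaining point is to check $U=(U_1,\dots,U_k)$ is a genuine $[k]$-graphon, i.e. $U_i \ge 0$ a.e. and $\sum_i U_i = 1$ a.e. Non-negativity and the sum constraint are \emph{closed} conditions under $\dcut$-convergence: if $f_m \to f$ in cut norm and each $f_m \ge 0$ a.e., then $f \ge 0$ a.e. (test against $S\times T$ with $S = \{f < -\eta\}$), and similarly $\sum_i W^{(m)}_i \equiv 1$ forces $\sum_i U_i \equiv 1$ since $\sum_i U_i$ is the cut-norm limit of the constant $1$. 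Finally, by the equivalence of metrics, $\dkcut(W^{(m)}, U) \le \sum_i \dcut(W^{(m)}_i, U_i) \to 0$, so the subsequence converges in $\dkcut$ and hence in $\deltakcut$; this establishes sequential compactness, which in a metric space is compactness.

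The technical heart, and the step I expect to require the most care, is the \emph{simultaneous} rearrangement: the Lov\'asz--Szegedy theorem delivers, for the first coordinate, rearrangements $\varphi_m$ with $\dcut(W^{(m),\varphi_m}_1, U_1)\to 0$, but applying these same $\varphi_m$ to the other coordinates may destroy any control one hoped to iterate on. The clean fix is to work throughout in the quotient space and invoke the \emph{metric} compactness of $(\widetilde{\WW},\deltacut)$ rather than rearrangement-by-rearrangement: pass to a subsequence along which $\deltacut(W^{(m)}_i, U_i)\to 0$ for all $i$ simultaneously by a diagonal argument over $i=1,\dots,k$ (a finite diagonalisation, hence unproblematic), obtaining the $U_i$ as $\delta_\square$-limits without needing a common $\varphi$. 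Then, because $\deltakcut \le \sum_i \deltacut$ holds directly on the quotient (the common-$\varphi$ subtlety only bites for the \emph{reverse} inequality, which we do not need here), we conclude $\deltakcut(W^{(m)}, U)\to 0$ along that subsequence. The only genuinely new verification beyond citing Lov\'asz--Szegedy is then the closedness of the constraint set $\{\sum_i W_i = 1,\ W_i\ge 0\}$ under $\delta_\square$-convergence, which follows from the elementary fact that the constraints are expressible as vanishing of certain cut-norm-continuous functionals, completing the argument.
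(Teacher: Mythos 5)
Your strategy --- reduce to the classical compactness of $(\widetilde{\WW},\deltacut)$ by viewing a $[k]$-graphon as a $k$-tuple of ordinary graphons with $W_i\geq 0$ and $\sum_i W_i=1$, extract coordinate-wise convergent subsequences, and verify the constraint set is closed --- is quite different from the paper's. The paper redoes the Lov\'asz--Szegedy argument from scratch in the $[k]$-decorated setting: a weak regularity lemma applied \emph{simultaneously} to all $k$ coordinates yields step-function approximations with a \emph{common} partition, a diagonalisation produces step-function limits, and the Martingale Convergence Theorem plus a $3\eps$-argument finishes. Crucially, in the paper's proof the $k$ coordinates remain coupled through a single common partition and rearrangement at every stage, and this is precisely what your approach loses.

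There is a genuine gap in your argument, and it is exactly the step you flagged as needing care but then dismissed. You assert that $\deltakcut(U,W)\leq \sum_i \deltacut(U_i,W_i)$ ``holds directly on the quotient'' and that the common-$\varphi$ subtlety ``only bites for the reverse inequality.'' This is backwards. The reverse inequality $\sum_i\deltacut(U_i,W_i)\leq k\,\deltakcut(U,W)$ is the one that is automatic: for any fixed $\varphi$ and each $i$, $\deltacut(U_i,W_i)\leq \dcut(U_i,W_i^\varphi)\leq \dkcut(U,W^\varphi)$, and taking the infimum over $\varphi$ gives $\deltacut(U_i,W_i)\leq\deltakcut(U,W)$. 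The \emph{forward} direction is the one that fails: from $\dkcut(U,W^\varphi)\leq \sum_i\dcut(U_i,W_i^\varphi)$ one gets $\deltakcut(U,W)\leq \inf_\varphi\sum_i\dcut(U_i,W_i^\varphi)$, but the right-hand side involves a \emph{single} $\varphi$ optimised jointly over all $i$, which only yields $\inf_\varphi\sum_i\dcut(U_i,W_i^\varphi)\geq\sum_i\deltacut(U_i,W_i)$ --- the wrong direction. Indeed the inequality $\deltakcut\leq\sum_i\deltacut$ is simply false: take $k=6$, partition $[0,1]$ into equal parts $V_1,\dots,V_5$, and let $W$ be the $\{0,1\}$-valued $[6]$-graphon that assigns colour $i$ to $(V_i\times V_{i+1})\cup(V_{i+1}\times V_i)$ for $i\in[5]$ (indices mod $5$) and colour $6$ elsewhere; let $U$ do the same but with colours $1$ and $2$ swapped. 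Then for every $i$, $W_i$ and $U_i$ are weakly isomorphic graphons (all five coloured coordinates are complete bipartite graphons on parts of measure $1/5$, and $W_6=U_6$), so $\sum_i\deltacut(U_i,W_i)=0$; yet $\deltakcut(U,W)>0$, since the two are not weakly isomorphic as $[6]$-graphons (the cyclic colour sequences $(1,2,3,4,5)$ and $(2,1,3,4,5)$ are not related by any dihedral symmetry of $C_5$). Consequently, passing to subsequences along which each coordinate converges in $\deltacut$ does not imply convergence of the $[k]$-graphons in $\deltakcut$, nor does it even guarantee that a consistent choice of representatives $(U_1,\dots,U_k)$ with $\sum_i U_i=1$ exists, since each $U_i$ is only determined up to its own independent rearrangement. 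To repair the argument one must control a common rearrangement for all coordinates throughout the extraction, which is what the paper's approach (common regularity partition, martingale limit) accomplishes and which a purely coordinate-wise reduction to the $k=1$ theorem cannot.
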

The proof is essentially identical to the proof of compactness with respect to cut distance for ordinary graphons as given by the original argument of Lov\'asz and Szegedy~\cite[Theorem 5.1]{LS:analyst}. Therefore we only give a brief sketch here. 
\begin{proof}[Sketch proof of Theorem~\ref{theorem: graphon compactness}]
First note that a weak regularity lemma for $[k]$-graphons follows very quickly from the weak regularity lemma for ordinary graphons (Lemma 3.1 in~\cite{LS:analyst}), simply by running it for each $W_i(x,y)$, $i \in [k]$ simultaneously.
Now given a sequence of $[k]$-graphons $W_1,W_2,\ldots \in \widetilde{\mathcal{W}_k}$, using the weak regularity lemma, we find for every $n\in \mathbb{N}$ a sequence of step-functions $W_{n ,\ell}$, $\ell \in \mathbb{N}$, converging to $W_n$. Now for each $\ell$ we find a subsequence $n_i$, $i \in \mathbb{N}$ for which $W_{n_i,\ell}$ converges in cut distance to a $[k]$-graphon $U_\ell$. Then by the Martingale Convergence Theorem, the sequence $(U_\ell)$ converges to a limit $U$. Finally one can show $\deltakcut(W_i,U) \to 0$ using a $3\eps$-argument.
\end{proof}

\subsection{Going between templates and graphons}
Fundamental to the theory of graph limits is a natural way of obtaining a graphon from a given graph, 
and conversely (via sampling) a way of obtaining a graph on $n$ vertices from a given graphon.
These transformations respect homomorphism densities and cut distance, and in particular, with probability one,
a sequence of $n$-vertex graphs $(G_n)_{n \in \mathbb{N}}$ sampled from $W$ converges to $W$ itself (as established in~\cite{Borgsetal08}).
Similarly here, we obtain a way of going between the discrete and limit objects.
The only property we require is that this transformation respects entropy, which as we will see follows easily from the definitions. 

Given a set of $n$ points $x_1,\dots,x_n$ from $[0,1]$ and a $[k]$-graphon $W$, 
we may define a $k$-colouring template for $K_n$, $t_W[x_1,\dots,x_n]$, by setting
$t(ij):=\{ c\in [k]: \mathbb{P}(W(x_i, x_j)=c) >0\}.$ 
Further we may define a random $k$-colouring $c_W[x_1,\dots,x_n]$ by setting
$c(ij)$ to be a random colour from $[k]$ drawn according to the probability distribution given by $W(x_i, x_j)$. We define the $W$-random template $t_W(n)$ and the $W$-random colouring $c_W(n)$ by selecting $x_1,\dots,x_n$ uniformly at random from $[0,1]$ 
and then taking the resulting (induced) $k$-colouring template and random $k$-colouring respectively.

Our $W$-random templates and colourings give us a way of going from $[k]$-graphons to $k$-colouring templates and $k$-colourings of $E(K_n)$. 
We can also go in the other direction: first divide $[0,1)$ into intervals $I_i:=[(i-1)/n,i/n)$ for $1 \leq i \leq n$. 
Given a $k$-colouring template $t$ of $K_n$, we may define $W_t$ by defining for every $(x,y) \in [0,1]^2$ and $c \in [k]$,
\[ \left(W_t\right)_c := \begin{cases} 
\frac{1}{|t(ij)|} \mathbbm{1}(c \in t(ij)) & \text{ if } (x,y) \in I_i \times I_j, 1 \leq i,j \leq n,  \ i\neq j; \\
\frac{1}{k} & \text{ if } (x,y) \in I_i \times I_i, 1 \leq i \leq n.\\
\end{cases}\]
In other words, for each tile $I_i \times I_j$ we distribute the mass evenly over the colours which appear in $t(ij)$, and give the uniform distribution to the diagonal tiles $I_i \times I_i$.
 
By viewing a $k$-colouring $c$ of $E(K_n)$ as a (zero entropy) template, we may in the same way obtain from it a $[k]$-graphon $W_c$. 
Thus we may go in a natural way from properties of colourings to properties of decorated graphons, and vice-versa.

Note that for all $k$ and $n$, and every $k$-colouring template $t$ for $K_n$, we have $\binom{n}{2} \Ent(W_t)=\Ent(t)+\frac{n-1}{2}$. In particular for a template $t$ and its associated $[k]$-decorated graphon $W_t$ we have
\begin{align}\label{equation: entconversion}
\Ent(W_t) = \frac{\Ent(t) + O(n)}{\binom{n}{2}}.
\end{align}
Furthermore in the reverse direction, given a $[k]$-decorated graphon $W$, $\binom{n}{2}\Ent(W)-\frac{n-1}{2}$ is exactly the expected value of the discrete $k$-ary entropy of the $W$-random colouring model $t_W(n)$.

\subsection{Proof of main result}
\begin{proof}[Proof of Theorem~\ref{theorem: maximum entropy gives growth rate for [k]}]
For each $n \in \mathbb{N}$ take an extremal template $t_n$ which maximises $\ex(n,\PP_n)$.
We have
\[ \pi(P)= \lim_{n \to \infty} \frac{ \ex(n, \mathcal{P}_n)}{\binom{n}{2}} = \lim_{n \to \infty} \frac{ \Ent(t_n)}{\binom{n}{2}}.\] 
Letting $W_{t_n}$ be the $[k]$-graphon corresponding to $t_n$, 
we have $\Ent(W_{t_n})=(\Ent(t_n)+O(n))/ \binom{n}{2}$ by~(\ref{equation: entconversion}). 
By Theorem~\ref{theorem: graphon compactness}, there exists a subsequence $(W^{j})_{j \in \mathbb{N}}$ of $\left(W_{t_n}\right)_{n\in \mathbb{N}}$ which converges to a limit $[k]$-graphon $W$, where we have $\deltakcut(W^j,W) \to 0$ as $j \to \infty$.  Now since $\langle t_n\rangle \subseteq \mathcal{P}_n$, for any $N$ fixed, the probability that the $W_{t_n}$-random colouring of $K_N$ is in $\mathcal{P}_N$ is $1-O(N/n)=1-o(1)$. It follows that a $W$-random colouring lies in $\mathcal{P}$ with probability $1$, and thus $W\in \hat{\PP}$. Since entropy is a linear functional we thus have $m(\hat{\PP}) \geq \Ent(W) =\pi(P)+o(1)$.

Conversely, let $W$ be an entropy maximiser in $\hat{\PP}$ 	with $\Ent(W)=m(\hat{\PP})$. 
For every $n \in \mathbb{N}$, by linearity of expectation if $n$ points $x_1,\dots,x_n$ are chosen uniformly at random from $[0,1]$ then with strictly positive probability
\[ \Ent(t_W[x_1,\dots,x_n]) \geq m(\hat{\PP}) \binom{n}{2}. \]
Furthermore, as $W \in \hat{\PP}$, almost surely $\langle t_W[x_1,\dots,x_n] \rangle \subseteq \PP_n$,
which implies that $|\PP_n| \geq k^{m(\hat{\PP}) \binom{n}{2}}$ for every $n \in \mathbb{N}$.
By Theorem~\ref{theorem: Cor2.15 from companion}, 
we have $|\PP_n| \leq k^{(\pi(\PP)+o(1))\binom{n}{2}}$,
and thus $\pi(\PP)+o(1) \geq m(\hat{\PP})$ as required.
\end{proof}

\section{Concluding remarks}\label{section: conclusion}
In this paper, we have explored some consequences of the simple versions of the container theorems of Balogh--Morris--Samotij and Saxton--Thomason for the problem of estimating volume or approximating by boxes for certain hereditary bodies. Many problems remain open however.

\subsection{Alternative approaches to containers?}
Using the Saxton--Thomason simple container theorem as a black box (which, as we stated at the beginning of Section~\ref{section: proofs}, is the result behind Theorem~\ref{theorem: ssee container} and thus the main tool behind all our results), we showed in Theorem~\ref{theorem: containers for hereditary bodies} that hereditary properties of $[0,1]$-decorated ssee-s can be approximated by a `small' union of boxes.

A natural question to ask is whether one can go in the other direction: is it possible to obtain a container theorem from purely geometric considerations on approximations of hereditary bodies by boxes? A simplest version of this question is the following: suppose we have a sequence $(b_n)_{n\in \mathbb{N}}$ of bodies with $b_n\subseteq [0,1]^n$ and every strict projection of $b_n$ into $[0,1]^N$ (where we use strict projection in the sense of Definition~\ref{definition: projections}) is a subset of $b_N$. Does it follow (by measure-theoretic/geometric arguments) that for all $n$ sufficiently large there exists a `fine' approximation of $b_n$
by a `small' collection of simple boxes?

\subsection{Questions about graph limits}
In a different direction, we have tried to connect some container-derived results with questions about limit objects. A natural question is, again, whether one can go in the other direction, and derive some finitary container theorems from infinitary arguments about limit objects?

The simplest example of this is perhaps the following: suppose we have a hereditary property $\PP$ of $\{0,1\}$-decorations of $E(K_n)$ (i.e. of ordinary graphs). Let $\hat{\PP}$ denote the closure of the family of limits of sequences of graphs from $\PP$ under the cut norm. Let $\mathcal{Q}$ denote the collection of graphons that lie at graph distance at most $\varepsilon$ from $\hat{\PP}$ --- this is a closed and hence compact set. Introduce a partial order on $\mathcal{Q}$ by setting $W_1\succ W_2$ if almost everywhere either $\mathrm{Ent}(W_1(x,y))>0$ or $W_1(x,y)=W_2(x,y)$ holds. Then for each $W\in \mathcal{Q}$, let $B(W)$ denote the interior of the collection of $W'\in \PP$ with $W\succ W'$. Clearly the $B(W)$ are open sets in the closed, compact set $\PP$. Thus if one could show that they also cover $\PP$ it would follow by compactness that there exists some finite set $S$ (with size depending on $\varepsilon$) of elements of $\mathcal{Q}$ such that $\bigcup_{W\in S}B(W)=\hat{\PP}$. One could then plausibly extract from the graphons in $S$ a small family of containers for $\PP_n$. This or other approaches to the construction of containers `from the limit' and from purely analytic considerations strike us as an intriguing problem.

With regards to limit objects, the other obvious question is generalising Theorem~\ref{theorem: maximum entropy gives growth rate for [k]} to $[0,1]$-decorated graphons. Here our problem is that we did not prove compactness of the limit space under the cut distance (i.e. we do not have a $[0,1]$-decorated version of Theorem~\ref{theorem: graphon compactness}), and so given a sequence of boxes from $\PP_n$ we could not extract a subsequence converging to an element of $\hat{\PP}$, which we needed to bound $m(\hat{P})$ below. Addressing this issue would immediately extend our results to $[0,1]$-decorated graphons and in addition would advance the project of Lov\'asz and Szegedy of building a theory for $\mathcal{K}$-decorated graph limits for second-countable compact Hausdorff spaces $\mathcal{K}$, a worthwhile goal in itself.

\subsection{Quality of the container approximation}
Can one improve assumption (ii) in Theorem~\ref{theorem: containers for Forb(finite union of boxes)} (and hence Theorem~\ref{theorem: containers for hereditary bodies})? For instance, could we guarantee that, say 
	\[\vol\biggl(\bigcup_{c\in \containers}c\biggr)\leq C_{\varepsilon}\vol(\PP_n)\]
	for some $n$-independent constant $C_{\varepsilon}>1$?
	Or could one show a weaker bound of the form 
	\[\vol(c\setminus \PP_n)< \varepsilon \vol(c)?\]
 	Putting it in slightly different terms: how fine can we make our approximation of a hereditary body $b$ by simple boxes? There should be a trade-off between the fineness of our approximation and the number of boxes it contains. Is it the case that e.g.~the worst-case product of the approximation ratio and the size of the approximation family is bounded below by some function of~$\vert V_n \vert$? Further, what do the bodies that are hardest to approximate look like?
	
\subsection{Relaxing homogeneity}	
	In Theorem~\ref{theorem: counting for homogeneous ssee} we obtained a rather clean statement concerning the volume of hereditary properties for homogeneous ssee-s. Homogeneity is a strong condition, however, and it natural to ask whether it can be relaxed. Explicitly, call a ssee $\mathbf{V}$ \emph{almost homogeneous} if there exist constants $C>c>0$ such that for every $n\geq N$, every $x \in V_n$ is contained in at least $c\bigl\vert\binom{V_n}{V_N}\bigr\vert\frac{\vert V_N\vert }{\vert V_n\vert }$ and at most $C\bigl\vert\binom{V_n}{V_N}\bigr\vert\frac{\vert V_N\vert }{\vert V_n\vert }$  embeddings $\phi(V_N)$ with $\phi \in \binom{V_n}{V_N}$.

	Can one obtain a version of Theorem~\ref{theorem: counting for homogeneous ssee} in which the homogeneity assumption is relaxed to almost homogeneity? 
	 This would increase the generality of the results in this paper and allow us to cover some important cases, such as that of the ssee $\mathfrak{I}$ where $\mathfrak{I}_n=[n]$ and the embeddings $\phi: \mathfrak{I} _N\rightarrow \mathfrak{I}_n$ consist of the injections from $[N]$ into arithmetic progressions of length $N$ in $[n]$.  Another example would be that of permutations, see Section~\ref{subsection: sparse bodies} below.
	
	\subsection{Containers for thin bodies}\label{subsection: sparse bodies}
	In this paper, we have been content with a simple container bound $\vert \mathcal{C}\vert \leq e^{\varepsilon \vert V_n\vert}$ on the size of the container family $\mathcal{C}$.  This is sufficient to estimate the volume of $\PP_n$ when the maximum volume of a box contained in $\PP_n$ (up to a zero-measure set) is of order $e^{-\theta\left(\vert V_n\vert \right)}$. However, for `thinner' bodies when this extremal volume is of order $e^{-\omega\left(\vert V_n\vert \right)}$, our results (more specifically Theorem~\ref{theorem: counting for homogeneous ssee}) say nothing more precise  than $\mathrm{vol}(\PP_n)= e^{-\omega(\vert V_n\vert )}$. This is definitely a limitation of our work --- the original container theorems of Balogh--Morris--Samotij and Saxton--Thomason  can give much better estimates, but require information on the degree measure (something which, as Saxton and Thomason~\cite{SaxtonThomason16} observe is unnecessary in the case of `thicker' bodies).

	There are a number of interesting examples within our framework where more precise estimates would be advantageous --- for instance, that of permutations, which we discuss below.

	Denote by $S_n$ the collection of all permutations of $[n]$.  Given a $[0,1]$-decoration of $[n]$, we may define a permutation in $S_n$ as follows. Let $\mathcal{B}$ denote the collection of $x\in [0,1]^{[n]}$ which have at least~two coordinates equal. Note that this is a zero-measure set. Given $x\in [0,1]^{[n]}\setminus \mathcal{B}$, we equip $[n]$ with a linear order $\leq_x$ by setting $i\leq_x j$ if $x_i \leq x_j$.  Then for each $i$, let $\sigma_x(i)$ denote the rank of $i$ in this order. Clearly $\sigma_x$ is a permutation of $[n]$, and every permutation can be realised in this way. Conversely, to each permutation $\sigma$ of $[n]$ we may associate the body $b_{\sigma}$ of all $x\in [0,1]^{[n]}\setminus \mathcal{B}$ such that $\sigma_x=\sigma$. Observe that (i) if $\sigma$, $\sigma'$ are distinct elements of $S_n$, then $b_{\sigma}$ and $b_{\sigma'}$ are disjoint subsets of $[0,1]^{[n]}$, and (ii) $\vol(b_{\sigma})=1/n!$ for all $\sigma\in S_n$. (It is also worth remarking that given a body $b\subseteq [0,1]^{V_n}$, we can define a $b$-random permutation by selecting $x\in b$ uniformly at random. This gives an interesting non-uniform model for random permutations.)

	Given permutations $\sigma \in S_N$ and $\tau \in S_n$, we say $\sigma$ is a \emph{subpattern} of $\tau$ if there is an order-preserving injection $\phi: \ [N]\rightarrow [n]$ such that for every $i,j \in [N]$, $\sigma(i)< \sigma(j)$ if and only if $\tau(\phi(i))<\tau(\phi(j))$. One important topic of study in permutation theory is that of \emph{pattern avoidance}. Can one count or characterise the permutations in $S_n$ avoiding a given pattern $\sigma\in S_N$?
	
	\begin{definition}
		A \emph{permutation class} $\PP$ is a sequence $(\PP_n)_{n\in \N}$ of subsets $\PP_n\subseteq S_n$ which is closed under taking subpatterns. (I.e.~if $\tau \in \PP_n$ and $\sigma\in S_N$ is a subpattern of $\tau$ then $\sigma \in \PP_N$.)
	\end{definition}
	\begin{definition}
		Given a permutation $\pi \in S_N$, let $S_n(\pi)$ denote the collection of all $\tau \in S_n$ that do not contain $\pi$ as a subpattern.

		\noindent The \emph{Stanley-Wilf limit} of the permutation class $\mathcal{S}= (S_n(\pi))_{n \in \N}$ is
		\[L(\pi):= \lim_{n\rightarrow \infty} \vert S_n(\pi)\vert^{1/n}. \]
	\end{definition}\noindent (The existence of the limit~$L(\pi)$ is highly non-trivial.  Its existence was established by Marcus and Tardos in 2004 in~\cite{MarcusTardos04}.) 	We observe here that pattern-avoidance and Stanley-Wilf limits for permutation classes fits very nicely within the framework of $[0,1]$-decorated ssee-s.  To wit: let $\V$ be the ssee with $V_n=[n]$ and $\binom{V_n}{V_N}$ being the collection of all order preserving injections $\phi: \ V_N\rightarrow V_n$. One can easily check that this constitutes a good ssee. Given a forbidden pattern~$\pi \in S_N$, let $\PP = \Forb(b_{\pi})$. Clearly, we have $\vert S_n(\pi)\vert^{1/n} = \left(n! \vol (\PP_n)\right)^{1/n}$.  Thus providing a good estimate on $\vol(\PP_n)$ via containers could potentially give a good estimate on $L(\pi)$. However, as pointed out in the previous subsection, the consequences of simple container theory obtained in this paper are not sufficiently precise to do so: in the language of graph theory, what we study corresponds to the `dense' case with strictly positive Tur\'an density, whereas pattern avoidance belongs to the `sparse' case with zero Tur\'an density.

	The example of permutations suggests it would be interesting to obtain versions of Theorem~\ref{theorem: containers for hereditary bodies} that work in a sparser setting, i.e.~with sharper estimates on the size of the container family $\containers$ than are given by (iii). In this case, one will have to go back to the original theorems of Balogh--Morris--Samotij and Saxton--Thomason, rather than use the simple (but weaker) container theorem of Saxton--Thomason as a black box.


\subsection{Entropy maximisation in the decorated graph setting}
Recall that the \emph{discrete entropy} or \emph{Shannon entropy} of a random variable $X$ taking values inside a discrete set $S$ is $\sum_{s\in s} -\mathbb{P}(X=s)\log\left( \mathbb{P}(X=s)\right)$. The entropy we consider in this paper (see Definition~\ref{definition: volume, entropy density}) can be viewed as a continuous analogue of discrete entropy when $X$ is a point sampled uniformly at random from some body $b\in \mathbb{R}^n$.

In the $\{0,1\}$-decorated setting, the rough structure of discrete entropy maximisers for hereditary properties of graphs is well-understood, via the choice number~$\chi_c$ (see~\cite{Alekseev93,BollobasThomason95} for the  set of possible `entropy densities' $\pi(\mathcal{P})$ and~\cite{AlonBaloghBollobasMorris11} for the possible structure of entropy maximisers).  By contrast, it is less clear what the set of possible values of entropy densities or the possible rough structure of graphs maximising entropy should be in the $[k]$-decorated setting for $k\geq 3$, let alone the set of entropy maximisers in the setting of $[0,1]$-decorated graphs.  We are only aware of one partial result in this area: Alekseev and Sorochan~\cite{AlekseevSorochan:colored} who established a dichotomy on the growth rate of a symmetric hereditary property of $[k]$-decorations of $E(K_n)$.  Moreover, it is clear that the possible structures of entropy maximisers are much more varied than in the case~$k  = 2$, see the discussion at the end of~\cite{FalgasRavryOconnellUzzell18}. This leads to the following analytic problems.
\begin{problem}\label{problem: possible maximum entropy in multicolour setting} 
	Let $k\in \N$ with $k\geq 3$. Let $\PP$ be a hereditary property of $[k]$-decorations of $E(K_n)$ and $\limitsx{\PP}$ be its completion under the cut norm. Determine the set of possible values for $m(\PP)=\sup_{W\in \limitsx{\PP}} \mathrm{Ent}(W)$, as well as the possible structures of entropy maximisers. 
\end{problem}
\begin{problem}\label{problem: possible maximum entropy in [0,1] setting}
Let $\PP$ be a hereditary property of $[0,1]$-decorated graphs and $\limitsx{\PP}$ be its completion under the cut norm. Determine the set of possible values for $m(\PP)=\sup_{W\in \limitsx{\PP}} \mathrm{Ent}(W)$, as well as the possible structures of entropy maximisers. 
\end{problem}
\section*{Acknowledgements}
This work has a slightly tortuous history, which began when AU visited VFR and JS at Vanderbilt University in Fall 2015. VFR gratefully acknowledges the support of an AMS--Simons travel grant, which funded this visit. An (extended) first version of the graph limit part of this paper appeared on arXiv in~\cite[Sections 5--7]{FalgasRavryOConnellStrombergUzzell16} in July 2016. It was later decided by the authors of~\cite{FalgasRavryOConnellStrombergUzzell16} to split that unwieldy paper into two, one of which resulted in the prequel~\cite{FalgasRavryOconnellUzzell18} to this paper, and the other of which was a hoped-for extension of the graph limit part of~\cite{FalgasRavryOConnellStrombergUzzell16} to $[0,1]$-decorated graphons. Ultimately, due to a combination of a variety of life events and unexpected mathematical difficulties, the hoped-for extension did not materialise, but in 2019 VFR with the help of RH developed the geometric approximation results Theorems~\ref{theorem: containers for hereditary bodies} and Corollary~\ref{corollary: counting for ssee} and combined them with the graph limit result Theorem~\ref{theorem: maximum entropy gives growth rate for [k]} to form the present paper.
Finally, the authors would like to thank the two anonymous referees for their careful and helpful reviews.

\end{document}